\newtheorem{theorem}{Theorem}[section]
\newtheorem{lemma}[theorem]{Lemma}
\newtheorem{cor}[theorem]{Corollary}
\newtheorem{prop}[theorem]{Proposition}
\theoremstyle{remark}
\newtheorem{definition}{Definition}[section]
\newtheorem{remark}{Remark}[section]
\newtheorem{claim}{Claim}[section]
\newcommand{\mcO}{\mathcal O}
\newcommand{\mcN}{\mathcal N}
\newcommand{\mcS}{\mathcal S}
\newcommand{\mc}{\mathcal}
\DeclareMathOperator{\SL}{SL}
\DeclareMathOperator{\HH}{H}
\DeclareMathOperator{\nrd}{nrd}
\DeclareMathOperator{\Prd}{Prd}
\DeclareMathOperator{\lk}{lk}
\DeclareMathOperator{\st}{st}
\DeclareMathOperator{\Sym}{Sym}
\DeclareMathOperator{\Alt}{Alt}
\DeclareMathOperator{\End}{End}
\DeclareMathOperator{\id}{id}
\DeclareMathOperator{\MCG}{MCG}
\newcommand\Q{\text{$\mathbb{Q}$}}
\newcommand\Z{\text{$\mathbb{Z}$}}
\newcommand\C{\text{$\mathbb{C}$}}
\newcommand\N{\text{$\mathbb{N}$}}
\newcommand\Imrho{\text{Im$(\tilde \rho)$}}
\DeclareMathOperator{\Aut}{Aut}
\DeclareMathOperator{\Out}{Out}
\DeclareMathOperator{\Inn}{Inn}
\DeclareMathOperator{\GL}{GL}
\DeclareMathOperator{\Mat}{Mat}
\title{Arithmetic Quotients of the Automorphism Group of a Right-Angled Artin Group}
\author{Justin Malestein\thanks{University of Oklahoma, Department of Mathematics, 601 Elm Ave, Norman, OK, 73019, justin.malestein@ou.edu}}
\begin{document}
	\maketitle
	
	\begin{abstract}
		It was previously shown by Grunewald and Lubotzky that the automorphism group of a free group, $\Aut(F_n)$, has a large collection
		of virtual arithmetic quotients. Analogous results were proved for the mapping class group by Looijenga and by Grunewald, Larsen,
		Lubotzky, and Malestein. In this paper, we prove analogous results for the automorphism group of a right-angled Artin group
		for a large collection of defining graphs. As a corollary of our methods we produce new virtual arithmetic quotients
		of $\Aut(F_n)$ for $n \geq 4$ where $k$th powers of all transvections act trivially for some fixed $k$.
		Thus, for some values of $k$, we deduce that the quotient of $\Aut(F_n)$ by the subgroup generated by $k$th powers
		of transvections contains nonabelian free groups. This expands on results of Malestein and Putman and of Bridson and Vogtmann.
	\end{abstract}

\section{Introduction}
It is well-known that $\Aut(F_n)$, the automorphism group of the free group, has a surjective representation onto $\GL_n(\Z)$ 
from the action on the abelianization of $F_n$. One can define similar representations via the action on
$\HH_1(R) = R/[R, R]$ where $R < F_n$ is a normal subgroup of finite index. 
In \cite{GL}, Grunewald and Lubotzky determined the image of the (virtual) representation
$\Aut(F_n) \to \Aut(\HH_1(R))$ when $R$ contains a primitive element and thereby produced a large collection of
virtual arithmetic quotients. Here, we say virtual since the representation may only be defined on a 
finite index subgroup of $\Aut(F_n)$ and is surjective up to finite index. As a sampling of their results, for any $n \geq 4$, they obtain 
$\SL_{m(n-1)}(\Z[\zeta])$ as a virtual quotient where $m$ is any positive integer and $\zeta$ is any root of unity.

Recently, there has been much interest in extending results about $\Aut(F_n)$ (or $\Out(F_n)$) 
to $\Aut(A_\Gamma)$ (or $\Out(A_\Gamma)$) or finding analogous results for $\Aut(A_\Gamma)$
where $A_\Gamma$ is a right-angled Artin group (RAAG).
Recall that for any finite graph $\Gamma$, the corresponding right-angled Artin group $A_\Gamma$
is the group with presentation
$$ \langle v \in V(\Gamma) \;\;\; | \;\;\; [v, w] \text{ if } v, w \text{ are adjacent in } \Gamma \rangle.$$
In the case where $\Gamma$ is an independent set, $A_\Gamma = F_n$.
In this paper, we show that, for a large class of graphs, $\Aut(A_\Gamma)$
has a rich collection of virtual arithmetic quotients via actions on the homology of finite index subgroups of $A_\Gamma$, and in the process, 
we understand better
these representations in the case of $\Aut(F_n)$ for some finite index $R < F_n$ containing no primitive elements. 

\begin{remark}
For arbitrary $\Gamma$, $\Out(A_\Gamma)$ need not have any ``interesting'' quotients since, e.g., there are $\Gamma$ for which $\Out(A_\Gamma)$ is finite
and, in one model of randomness, this is the generic case \cite{CharneyFarber, Day}. Therefore, some conditions on $\Gamma$ are needed, but the minimum
conditions are not at all clear. The conditions we impose are described below.
\end{remark}

\subsection{New results for $\Out(F_n)$}

As a corollary of our results, we obtain new virtual arithmetic quotients of $\Aut(F_n)$
different from those in \cite{GL}.
Whereas Grunewald and Lubotzky study the action on $\HH_1(R)$ for $R < F_n$ redundant (i.e. $R$ contains a primitive element),
we are able to describe the action on part of $\HH_1(R)$ for some (but not all) nonredundant $R$.
Our new results combined with those of \cite{MPut} yield the following theorem.

\begin{theorem} \label{thm:outfnmodtrans}
	For each $n \geq 4$, there is an infinite set $\mc K_n$ such that for every $k \in \mc K_n$,
	there is a virtual arithmetic quotient of $\Out(F_n)$ whose kernel contains
	the $k$th powers of all transvections. Moreover, that quotient contains nonabelian free subgroups.
\end{theorem}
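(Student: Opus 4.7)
The plan is to marry the virtual arithmetic quotients produced in this paper (which extend \cite{GL} by handling certain \emph{nonredundant} finite-index subgroups of $F_n$) with the characteristic subgroup constructions of \cite{MPut}, which, for an infinite set of integers $k$, single out finite-index characteristic subgroups $R_k < F_n$ on whose abelianization every $k$th power of a Nielsen transvection acts trivially.

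First, for each such $k$ I would fix the associated $R_k$ and consider the virtual representation
\[
\rho_k \colon \Aut(F_n) \longrightarrow \Aut(\HH_1(R_k)).
\]
The subgroups $R_k$ are nonredundant when $n \geq 4$, so they fall outside the scope of \cite{GL} but, I would verify, inside the hypotheses of the main theorem of the present paper. By construction in \cite{MPut}, $\rho_k(\tau^k) = \id$ for every Nielsen transvection $\tau$.

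Next, I would apply this paper's arithmeticity theorem to $\rho_k$, identifying an $\Aut(F_n)$-invariant subquotient $V$ of $\HH_1(R_k) \otimes \Q$ on which the image is a virtual arithmetic group, e.g.\ of the form $\SL_d(\mcO)$ for an order $\mcO$ in a cyclotomic field with $d \geq 2$. Since inner automorphisms of $F_n$ act on $\HH_1(R_k)$ only through the finite quotient $F_n/R_k$, restricting to an appropriate isotypic component produces a representation factoring through $\Out(F_n)$ and yields the virtual arithmetic quotient $Q_k$ of $\Out(F_n)$. Because $Q_k$ is commensurable with a higher-rank arithmetic group it is not virtually solvable, so the Tits alternative delivers nonabelian free subgroups inside $Q_k$. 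Taking $\mc K_n$ to be the collection of admissible $k$ completes the argument.

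The main obstacle is the compatibility step: one must confirm that the specific characteristic subgroups $R_k$ arising from the Malestein--Putman construction satisfy the hypotheses of the arithmeticity theorem of this paper, and that the transvection-killing property survives the passage to the subquotient $V$ on which arithmeticity is established. In effect, the two constructions --- \cite{MPut} supplying the kernel information and this paper supplying the image --- must be run simultaneously on the same finite-index subgroup of $F_n$, and the content of the theorem is precisely that this can be arranged for infinitely many $k$.
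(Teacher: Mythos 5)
Your high-level strategy matches the paper exactly: combine the transvection-killing finite quotients $q\colon F_n \to G$ of \cite{MPut} with this paper's arithmeticity machinery, obtain a virtual arithmetic quotient of $\Out(F_n)$, and extract free subgroups from higher rank. The paper indeed takes the preorder on $W$ in which every pair is comparable (so $\Aut(F[W],\mc U)$ is finite index in $\Aut(F[W])$), feeds MPut's Proposition~4.3 into Corollary~\ref{cor:maintechcorlowerbound}, and uses the resulting $\mc G$.

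However, your proposal explicitly defers the central verification --- you write ``one must confirm that the specific\ldots\ $R_k$\ldots\ satisfy the hypotheses'' --- and that verification is precisely the content of the theorem. Concretely, the paper checks conditions~$\ddagger$ for the MPut triple $(G,B,q)$: condition~1 holds because $p(q(v))$ has no nonzero fixed vector, so $1$ is not an eigenvalue and $1_B(q(v)-1)$ is invertible for every $v\in W$; conditions~2 and~3 hold because the sole maximal equivalence class has size $n\geq 4$. Without this check the argument is an outline, not a proof. A second real gap is the reduced-norm condition in the definition of $\mc G$: Corollary~\ref{cor:upperboundworednrm} only lands in $\mc G'$ (the group \emph{without} the $\nrd=1$ constraint), and closing that gap requires either invoking property~(T) for $\Aut(F_m)$, $m\geq 5$, or the Dirichlet-unit-theorem twist from \cite{GL} that the paper uses. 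You do not address this. Finally, a smaller imprecision: restricting to a nontrivial isotypic component does \emph{not} make the representation factor through $\Out(F_n)$; inner automorphisms still act via central elements of $G$. The paper's Corollary~\ref{cor:reloutquots} instead observes that $\Inn(R)$ is finite index in $\Inn(F[W])$ and acts trivially on $\HH_1(R)$, so $\rho_B(\Inn)$ is torsion, and then passes to a finite-index torsion-free subgroup of the target via Selberg's Lemma. Your Tits-alternative route to nonabelian free subgroups is a valid alternative to the paper's direct observation that $\mc G$ contains a copy of $\SL_2(\Z)$ when $n\geq 4$, but the other two gaps are genuine.
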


\begin{remark}
	The kernel contains powers of all transvections relative to any basis of $F_n$.
\end{remark}

\begin{remark}
	The virtual arithmetic quotients are of the form $\SL_{n-1}(\mcO)$ where $\mcO$ is an order
	in a finite-dimensional simple algebra over $\Q$. However, we do not determine $\mcO$ explicitly.
\end{remark}

As noted in \cite{MPut}, we can take $\mc K_n$ to be 
$$ \mc K_n = \{k \;\;|\;\; \exists \text{prime power } p^e \text{ dividing } k 
	\text{ where } p^e > p(p - 1)(n - 1)\}. $$ 
This is an improvement on 
Theorem D of \cite{MPut} which states that $\Out(F_n)$ mod the subgroup generated
by $k$th powers of transvections contains elements of infinite order
(which itself is an improvement of a theorem of Brison and Vogtmann \cite{BridsonVogtmann}
that this quotient group is infinite).

\subsection{Domination and the homology representation of $\Aut(A_\Gamma)$}
To give some context to our results, we first describe the virtual image under the homology
representation $\rho_0 \colon \Aut(A_\Gamma) \to \Aut(\HH_1(A_\Gamma))$.
Since the only relations in $A_\Gamma$ are commutation relations, we see that 
$\HH_1(A_\Gamma) = A_\Gamma/[A_\Gamma, A_\Gamma] \cong \Z^n$ where $n = |V(\Gamma)|$.
Thus, $\rho_0$ can be viewed as a representation to $\GL_n(\Z)$. To describe the virtual image,
we must define domination of vertices and the generators of $\Aut(A_\Gamma)$.

We say that a vertex $v \in V(\Gamma)$ {\it dominates} $w \in V(\Gamma)$ if the link of $w$ ($\lk(w)$)
is a subset of the star of $v$ ($\st(v)$).
We denote this relation by $v \geq w$. The domination relation is transitive and reflexive; i.e. it is a 
{\it preorder} \cite{CharneyVogtmann}[Lemma 2.2]. 
We say that $v$ is {\it domination equivalent} to $w$ if $v$ dominates $w$ and $w$ dominates $v$. This 
is an equivalence relation, and domination induces a poset on domination equivalence classes.

For a vertex $v$, let $U(v) = \{w \in V(\Gamma) \mid w \geq v\}$. Using vertices as a basis
of $\HH_1(A_\Gamma)$, let $\langle U(v) \rangle$ denote the free summand generated by $U(v)$.
Then $\rho_0(\Aut(A_\Gamma))$ is commensurable with the subgroup of $\GL_n(\Z)$ preserving
the subspaces $\langle U(v) \rangle$ for all $v \in V(\Gamma)$. (This is proven implicitly
in \cite{Day2}[Corollary 3.11].) Domination posets need not
be a chain, but, relative to some basis, the subgroup preserving all $\langle U(v) \rangle$ is
block uppertriangular where some blocks above the diagonal are $0$. See the appendix for some examples.

To see that this is the virtual image, we use the generators of $\Aut(A_\Gamma)$.
By a result of Laurence, $\Aut(A_\Gamma)$ is generated by the following types of automorphisms\cite{Laurence}.
\begin{itemize}
	\item \textbf{Dominated Transvections:} Given $v \geq w$, a corresponding dominated transvection
	maps $w$ to $wv$ or $vw$ and fixes all other vertices.
	\item \textbf{Partial Conjugations:} Given a component $C$ of $\Gamma - \st(v)$, the corresponding
	partial conjugation maps $w \mapsto vwv^{-1}$ for all $w \in C$ and fixes all other vertices.
	\item \textbf{Inversions:} These invert a generator $v$.
	\item \textbf{Graphic Automorphisms:} These are automorphisms induced by graph automorphisms of $\Gamma$.
\end{itemize}
As can be shown, (see Lemma \ref{lemma:onlytandpc}),
the subgroup, $\Aut^0(A_\Gamma)$ generated by transvections, partial conjugations, and inversions
is finite index in $\Aut(A_\Gamma)$. Partial conjugations act trivially on $\HH_1(A_\Gamma)$, and inversions act by 
diagonal matrices with entries $\pm 1$. Homologically, a transvection only adds to some $w \in V(\Gamma)$
a multiple of some $u \in V(\Gamma)$ which dominates $w$ and therefore it preserves the subspaces
$\langle U(v) \rangle$. In fact, transvections map to elementary matrices, and these and the diagonal matrices 
can be shown to generate the subgroup of all invertible matrices preserving the subspaces $\langle U(v) \rangle$.

\subsection{Main Theorems}
As noted above, one can only produce virtual arithmetic quotients when $\Aut(A_\Gamma)$ has some complexity, 
and so we impose some conditions on $\Gamma$.
We will present these conditions shortly, but first we describe the arithmetic quotients 
obtained under those conditions. The arithmetic quotients will look similar to the image of $\rho_0$ in that
there will be invariant subspaces analogous to $\langle U(v) \rangle$. Some key differences are that there
is an additional invariant subspace and the image lies not in $\GL_n(\Z)$ but $\GL_{n-1}(\mcO)$ where $\mcO$ is a ring of matrices.

To an order $\mcO$ in a finite-dimensional simple $\Q$-algebra, a preorder $\preceq$ on a finite set $W$, and a 
submodule $H_\mcO$ of the free left $\mcO$-module on $W$, we will associate two arithmetic groups $\tilde{\mc G}$ and $\mc G$ defined as follows. An \textit{order} is a subring which spans the $\Q$-algebra as a $\Q$-vector space
and is isomorphic to a free $\Z$-module. E.g. $\Mat_m(\Z) \subset \Mat_m(\Q)$.
Let $C_\mcO$ be the free left $\mcO$-module on $W$, and for any subset $W' \subseteq W$,
let $C_{\mcO}[W']$ be the left $\mcO$-submodule of $C_{\mcO}$ generated by $W'$.
Abusing notation\footnote{The notation $U(v)$ and $L(v)$ will denote the upper bounds and lower bounds resp. of an element $v$ 
relative to the preorder that makes sense in
that context. The preorder will always be clear except for one instance in Section \ref{section:reduction} where
we will specify the preorder. Equivalence classes will also refer to equivalence classes induced by the preorder, i.e. $v \sim w$ if 
$v \succeq w$ and $v \preceq w$.}, we let $U(v)$ be the upper bounds of $v$ under $\preceq$, i.e. $U(v) = \{w \in W \mid w \succeq v\}$.
Let $U'(v) = U(v) - v$.
We define $\tilde{\mc G} < \Aut_{\mcO}(C_{\mcO})$ to be the subgroup of automorphisms
satisfying the following conditions.
\begin{enumerate}
	\item $C_{\mcO}[U(v)]$ is invariant for all $v \in W$.
	\item The restriction to $C_{\mcO}[U(v)]$ has reduced norm $1$.
	\item If $v$ has a trivial equivalence class, then the action on $C_{\mcO}[U(v)]/C_{\mcO}[U'(v)]$ 
	is trivial.
	\item $H_{\mcO}$ is invariant.
	\item The action on $C_{\mcO}/H_{\mcO}$ is trivial.
\end{enumerate}
We define $\mc G$ to be the restriction of $\tilde{\mc G}$ to $H_{\mcO}$.
See section \ref{section:linalggrp} for the definition of reduced norm. In the case that
$\mcO = \Mat_m(\Z[\zeta])$ where $\zeta$ is a root of unity, reduced norm is the same as the determinant
under a canonical identification $\GL_{|U(v)|}(\mcO^{op}) \cong \GL_{m|U(v)|}(\Z[\zeta])$.
While it's convenient later to allow $H_{\mcO}$ to be an arbitrary submodule, we let
$H_{\mcO}^-$ be the submodule generated by all differences of all pairs of free generators.

Note that if $\mcO = \Z$, $W = V(\Gamma)$, and $\preceq$ is the domination preorder, then the image of $\rho_0$ virtually satisfies 
conditions 1, 2, and 3. The only essentially new conditions are 4 and 5,
(and the fact that $W$ is usually a proper subset of $V(\Gamma)$).

\begin{theorem} \label{thm:mainthm}
	Let $\Gamma$ be a finite graph,  $W \subseteq V(\Gamma)$ a subset satisfying conditions $\dagger$ defined below,
	and $\preceq$ the domination preorder restricted to $W$.
	Let $\mcO = \Mat_m(\Z[\zeta])$ where $\zeta$ is a primitive $k$th root of unity, and let $H_\mcO = H_\mcO^- \subseteq C_\mcO$.
	Then, there is a virtual surjective representation from $\Out(A_\Gamma)$ to the group $\mc G$ defined above
	if one of the following conditions is satisfied.
	\begin{itemize}
	\item $m \geq 6|W|+2$ and $k = 1$.
	\item $1 < m < k$ divides $k$, the largest prime factor of $k$ is at least $|W|$, 
	and $m, \frac{k}{m}$ are coprime.
	\end{itemize}
\end{theorem}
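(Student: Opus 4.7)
The plan follows the Grunewald--Lubotzky blueprint \cite{GL}: construct a finite-index characteristic subgroup $R \triangleleft A_\Gamma$, pass to the finite-index subgroup of $\Aut(A_\Gamma)$ preserving $R$, and study its action on a carefully chosen summand of $\HH_1(R)$ with twisted coefficients. Concretely, first I would define a surjection $\phi \colon A_\Gamma \to H$ onto a finite group $H$ built from copies of $\Z/k$ (together with a Heisenberg-type extension in the case $1 < m < k$), sending each vertex of $W$ to a nontrivial torsion element and vertices outside $W$ to the identity. The conditions $\dagger$ on $W$ should be exactly what is needed for $\phi$ to respect the commutation relations of $A_\Gamma$ and for $R := \ker\phi$ to be stabilized by a finite-index subgroup of $\Aut(A_\Gamma)$. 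Choosing an appropriate irreducible $\Z[\zeta]$-representation $V$ of $H$, a Reidemeister--Schreier / transfer argument then identifies the $V$-isotypic summand of $\HH_1(R)\otimes_\Z\Z[\zeta]$ with the free $\mcO$-module $C_\mcO$ on $W$, where $\mcO = \Mat_m(\Z[\zeta])$ arises as the equivariant endomorphism ring of $V$; the submodule $H_\mcO^-$ appears naturally as the kernel of the augmentation.

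Next I would show the image lies in $\mc G$ by checking conditions (1)--(5) against each Laurence generator. A dominated transvection $w \mapsto wv$ with $v \succeq w$ acts on the isotypic piece by the elementary $\mcO$-matrix adding an $\mcO$-multiple of the $v$-coordinate to the $w$-coordinate; this automatically preserves every $C_\mcO[U(u)]$ and has reduced norm $1$, giving (1) and (2). Condition (3) is immediate because for $v$ in a singleton equivalence class, every transvection $v \mapsto v + \alpha u$ with $u \succeq v$, $u \neq v$, has $u \in U'(v)$ and so vanishes modulo $C_\mcO[U'(v)]$. Partial conjugations act as inner automorphisms on the relevant generators and are therefore trivial modulo the augmentation, giving (4) and (5). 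Inversions and graphic automorphisms contribute signed-permutation-type matrices whose reduced norm has finite order, and are absorbed by a further finite-index pass.

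The principal obstacle is showing the image has finite index in $\mc G$. Here I would use iterated commutators of transvections together with their graph-automorphism conjugates to produce sufficiently many elementary $\mcO$-matrices in each off-diagonal block and within each diagonal block indexed by a nontrivial equivalence class; the two numerical hypotheses then correspond to the two regimes in which Bass--Milnor--Serre style elementary generation of $\SL_n(\mcO)$ closes the argument. When $k = 1$ and $m \geq 6|W| + 2$, the stable rank of $\Z$ together with the generous size of $m$ make the classical theorem directly available after fusing blocks of size $m$ across $W$. When $1 < m \mid k$ with $\gcd(m,k/m) = 1$ and the largest prime factor of $k$ at least $|W|$, the coprimality and prime-factor conditions are precisely what is required to produce the extra units of $\mcO$ and the Heisenberg-type structure on $H$ needed to apply the corresponding elementary-generation statement for $\SL_n$ over a matrix order in a division algebra. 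Finally, since $H_\mcO^-$ is contained in the augmentation kernel, inner automorphisms act trivially and the representation descends from $\Aut(A_\Gamma)$ to $\Out(A_\Gamma)$.
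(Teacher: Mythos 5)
Your high-level blueprint (finite quotient, pass to a stabilizing finite-index subgroup of $\Aut(A_\Gamma)$, examine an isotypic piece of $\HH_1(R)$, verify the block structure against Laurence generators, then generate $\SL$ up to finite index) matches the paper's architecture, but several load-bearing steps are missing or wrong.

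The most concrete gap is the construction of the finite group $G$ (your $H$) in the $k=1$ case. You propose building $H$ from ``copies of $\Z/k$'' with a Heisenberg extension when $1<m<k$. For $k=1$ this makes $H$ trivial, which produces nothing. Getting $\mcO = \Mat_m(\Z)$ with $m \geq 6|W|+2$ requires a finite group $G$ with a degree-$m$ absolutely irreducible rational representation $p$, together with a map $q\colon F[W]\to G$ satisfying the invertibility and generation conditions $\ddagger$ (each $p(q(v))-1$ invertible, and enough pairs of $q(v)$ generating $G$). The paper achieves this by taking $G=\Sym(m+1)$ or $\Alt(m+1)$ with the standard representation, and the numerical bound $m\geq 6|W|+2$ comes from Lemma~\ref{lemma:gensym}, which constructs $|W|$ pairwise-generating $m{+}1$-cycles only when $m+1\geq 6|W|+3$. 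The Heisenberg group $H(k)$ is used only in the second case. Without a replacement for the $k=1$ construction your proof does not close.

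A second issue is your upper-bound check: you assert that a dominated transvection $w\mapsto wv$ gives directly an elementary $\mcO$-matrix with reduced norm $1$. But such a transvection does not preserve $R=\ker q$ (since $q(v)\neq 1$ for $v\in W$), so it does not act on $\HH_1(R)$; only transvections $w\mapsto wr$ with $r\in R$ (and more generally a finite-index subgroup of $\Aut(F[W],\mcU)$) do. Once you restrict to those, the image is no longer visibly generated by elementary matrices, and proving the reduced-norm-$1$ condition is genuinely nontrivial. The paper needs one of three hypotheses here (metabelian $G$ via Proposition 8.7 of \cite{GL}, $B\cong\Mat_m(\Q)$, or property (T) for $\Aut(F_m)$). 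Relatedly, your lower bound (``iterated commutators'') is far too terse: the paper's lower bound splits $\tilde{\mcG}$ into a semisimple part and a nilpotent part and handles them by separate multi-step arguments (Stages 1--3), with Claims \ref{claim:gettransvforast} and \ref{claim:transvbyanycycle} doing nontrivial commutator bootstrapping to escape the constraint that naive constructions only give coefficients in $p(\Q[G(U'(v))])$ rather than all of $\mcO$. This is where condition 2 of $\ddagger$ is used, and it is not a formality.
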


\noindent In the appendix, we present some example arithmetic quotients as matrix groups.

\begin{remark}
Our methods can produce even more arithmetic quotients, but we defer the most general statement
we can make to a later section. See Corollary \ref{cor:generaloutagamquots}.
\end{remark}

In the course of the proofs, we will define a virtual representation from $\Aut(A_\Gamma)$ to $\Aut_{\mcO}(C_{\mcO})$ as well. 
The reason for restricting to $H_{\mcO}$ is that
the image in $\Aut_{\mcO}(C_{\mcO})$, in general, need not lie in $\tilde{\mc G}$, but their restrictions to $H_{\mcO}$ coincide.
Nevertheless, we can determine the image inside $\Aut_{\mcO}(C_{\mcO})$ (up to isomorphism). 

\begin{theorem} \label{thm:mainthmsemidprod}
	Let $\Gamma, W, \preceq, \mc O, H_\mcO,$ and $\mc G$ be as in Theorem \ref{thm:mainthm}.
	Then, there is a virtual surjective representation from $\Aut(A_\Gamma)$ to $\mc G \ltimes H_\mcO$
	if one of the following conditions is satisfied.
	\begin{itemize}
	\item $m \geq 6|W|+2$ and $k = 1$.
	\item $1 < m < k$ divides $k$, the largest prime factor of $k$ is at least $|W|$, 
	and $m, \frac{k}{m}$ are coprime.
	\end{itemize}	
\end{theorem}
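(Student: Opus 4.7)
The plan is to bootstrap from Theorem \ref{thm:mainthm} by retaining the information about inner automorphisms that is killed by passing to $\Out(A_\Gamma)$. Let $\tilde \rho \colon \Aut(A_\Gamma) \to \Aut_\mcO(C_\mcO)$ be the virtual representation built in the course of proving Theorem \ref{thm:mainthm}, and let $\mc A \leq \Aut_\mcO(C_\mcO)$ be the subgroup of automorphisms that preserve $H_\mcO$ and act trivially on $C_\mcO / H_\mcO$. Since $H_\mcO = H_\mcO^-$ is the kernel of the ``sum of coordinates'' augmentation, fixing any $w_0 \in W$ gives a decomposition $C_\mcO = \mcO \cdot e_{w_0} \oplus H_\mcO$, and the map $g \mapsto (g|_{H_\mcO},\, g(e_{w_0}) - e_{w_0})$ identifies $\mc A$ with $\Aut_\mcO(H_\mcO) \ltimes H_\mcO$ acting tautologically. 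Under this identification, the preimage of $\mc G$ is precisely $\mc G \ltimes H_\mcO$.

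I first verify $\tilde \rho(\Aut(A_\Gamma)) \subseteq \mc A$ by checking on Laurence's generators and on the inner automorphisms that each preserves $H_\mcO$ and acts trivially on the augmentation quotient --- this is the same calculation built into conditions (4) and (5) of the definition of $\tilde{\mc G}$, now carried out at the $\mc A$-level rather than the $\tilde{\mc G}$-level. Combined with Theorem \ref{thm:mainthm}, which gives that the composition $\Aut(A_\Gamma) \xrightarrow{\tilde \rho} \mc A \twoheadrightarrow \Aut_\mcO(H_\mcO)$ virtually surjects onto $\mc G$, this produces virtual containment $\tilde \rho(\Aut(A_\Gamma)) \subseteq \mc G \ltimes H_\mcO$ together with virtual surjectivity of the $\mc G$-coordinate.

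It remains to show the translation coordinate is virtually surjective. Since inner automorphisms lie in the kernel of the virtual map $\Aut(A_\Gamma) \to \Out(A_\Gamma) \to \mc G$, their image under $\tilde \rho$ sits entirely in the translation subgroup $H_\mcO$. The crux is then computing $\tilde \rho(c_v)(e_{w_0}) - e_{w_0}$ for each vertex conjugation $c_v$: by construction of $\tilde \rho$ via the action on the homology of a characteristic finite-index subgroup $R \leq A_\Gamma$, this difference records the commutator class $[v, \tilde e_{w_0}] \in R^{\mathrm{ab}}$ and yields a nontrivial $\mcO$-shift determined by $v$. Conjugating these shifts by elements in the already-surjective $\mc G$-coordinate then sweeps them out to a finite-index subgroup of $H_\mcO$. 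The main obstacle is this last step: one must produce even a single nontrivial vertex translation and then verify that the supply of transvections in $\mc G$ --- whose availability is exactly what the numerical hypotheses $m \geq 6|W|+2$ or the prime-factor condition on $k$ in Theorem \ref{thm:mainthm} provide --- is rich enough to spread it across all of $H_\mcO$ up to finite index.
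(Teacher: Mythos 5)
Your high-level strategy is the same as the paper's: identify a subgroup of $\Aut_\mcO(C_\mcO)$ isomorphic to $\mc G \ltimes H_\mcO$, use the earlier theorems to handle the $\mc G$-coordinate, and use inner automorphisms to fill out the translation coordinate $H_\mcO$. The decomposition $C_\mcO = \mcO e_{w_0}\oplus H_\mcO$ with $g \mapsto (g|_{H_\mcO}, g(e_{w_0})-e_{w_0})$ is also what the paper uses implicitly (it packages the translation subgroup as a group $\mc N$ and sets $\mathfrak G = \tilde{\mc G}\mc N$).

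However there is a genuine gap in your treatment of the translation coordinate, and it is not only that the ``main obstacle'' is left open. You assert that the images of inner automorphisms ``sit entirely in the translation subgroup $H_\mcO$'' and propose computing $\tilde\rho(c_v)$ for vertex conjugations $c_v$. This is not correct: for $\varphi = c_x$ conjugation by an arbitrary $x$ (with $g = q(x)$ central so that $c_x \in \Aut(F[W]; R)$), the Fox-calculus computation gives $\tilde\rho(c_x)(e_v) = (1_B g)e_v - 1_B\overline{x}$, so on $H_B$ the automorphism acts by scalar multiplication by $1_B g$, which is \emph{not} the identity unless $g$ maps to $1$ in $B$. In particular a vertex conjugation $c_v$ with $v\in W$ typically has $1_B q(v)\neq 1_B$ (indeed $b_v = 1_B(q(v)-1)$ is required to be invertible by condition $\ddagger$), so $\tilde\rho(c_v)$ does \emph{not} lie in the translation part. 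The paper avoids this by restricting to $\Inn(R)$: for $r\in R$ one has $q(r)=1$, and then $\tilde\rho(\varphi_r)(e_v) = e_v - 1_B\overline r$ is a pure translation.

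Once you correctly restrict to $\Inn(R)$, the surjectivity onto a finite-index subgroup of $H_\mcO$ still needs an argument, and the route you sketch (spread a single nonzero translation around by conjugating with $\mc G$) is not what the paper does. The paper shows directly, via Lemma \ref{lemma:lifttogroupring}, that for some integer $M>0$ every $z\in M H_\mcO$ equals $1_B\overline r$ for some $r\in R$, so $\tilde\rho(\Inn(R))$ already contains a finite-index subgroup of the translation group without any sweeping by $\mc G$. Your conjugation idea could plausibly be made to work (the $\SL$-type action on $\mcO^{n-1}$ is large), but you neither produce the initial nontrivial translation correctly (because of the scalar-factor issue above) nor carry out the conjugation argument, so as written this is a gap rather than a complete alternative.
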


As we will see, we don't have a representation from $\Out(A_\Gamma)$ to $\mc G \ltimes H_{\mcO}$ 
since inner automorphisms act nontrivially and (virtually) surject onto the $H_{\mcO}$ factor.
In the case $A_\Gamma = F_n$ and $n \geq 4$, the image in $\Aut_{\mcO}(C_{\mcO})$ turns out to be the same as 
$\tilde{\mc G}$. Some example arithmetic quotients of $\Aut(F_n)$ that we obtain are
$\SL_{(n-1)m}(\Z) \ltimes \Z^{(n-1)m}$ for all $n \geq 4$ and $m \geq 6n+2$. (See Remark \ref{remark:mfGistildeG}.)

\subsection{Conditions on $\Gamma$}
We now define the conditions $\dagger$. Let $L(v) = \{w \in V(\Gamma) \mid w \leq v\}$.
We say that \textit{$v \in V(\Gamma)$ divides $W \subseteq V(\Gamma)$ trivially} if 
$W - L(v)$ lies in a single component of $\Gamma - \st(v)$. \\

\noindent \textbf{Conditions:} The conditions $\dagger$ for a subset $W \subseteq V(\Gamma)$ are the following.
\begin{enumerate}
	\item $W$ is an independent set in $\Gamma$.
	\item There exist two distinct vertices in $W$ both dominating a third vertex in $W$.
	\item Maximal domination equivalence classes in $W$ do not have exactly $2$ vertices.
	\item All vertices in $W$ divide $W$ trivially.
	\item $W$ is closed under lower bounds, 
		i.e. if $v_1 \leq v_2$ and $v_2 \in W$, then $v_1 \in W$.
\end{enumerate}

In Section \ref{section:conditions}, we will discuss how restrictive these conditions are
and also how necessary they are to obtain virtual arithmetic quotients. 
While the conditions $\dagger$ seem rather restrictive, we will see that a graph
lacking such a vertex subset has some strong constraints. Note that condition 2
ensures that $\mc G$ is nontrivial.

We are mainly focused on arithmetic quotients
generated by images of transvections. Moreover, some of the conditions above
are imposed to ensure that partial conjugations act essentially trivially
under the representation. Specifically, condition 4 implies that any partial conjugation of $W$
by a vertex in $W$ is, up to an inner automorphism, equivalent to a product
of dominated transvections (if we ignore what happens to vertices outside $W$).
While there are almost certainly $\Aut(A_\Gamma)$ with interesting actions on some
$\HH_1(R)$ where partial conjugations act nontrivially, we were unable to prove
any kind of general theorem in such cases. Indeed, at the far extreme, we could not
determine the virtual image of $\Aut(A_\Gamma)$ in $\Aut(\HH_1(R))$ when $\Aut(A_\Gamma)$
admits partial conjugations but no dominated transvections.

\subsection{Relative automorphism groups}
In \cite{DayWade}, Day and Wade study relative automorphism groups of right-angled Artin groups.
The proof of our theorems proceed by first reducing the main theorems to a similar statement
about relative automorphism groups of free groups. As a corollary of our methods, we obtain
arithmetic quotients of certain large classes of relative outer automorphism groups of $F_n$.
Specifically, Theorems \ref{thm:mainthm} and \ref{thm:mainthmsemidprod} apply if we replace
$\Aut(A_\Gamma)$ and $\Out(A_\Gamma)$ with the relative automorphism groups that appear in
Section \ref{section:reduction}. Precise theorems are stated in Section \ref{section:definingrho}.

\subsection{Other related work}
Guirardel and Sale use a particular virtual representation similar to the representations studied in this paper
to show that $\Out(A_\Gamma)$ is large (i.e. virtually maps onto a nonabelian free group) 
when $\Gamma$ has a simply intersecting link of a particular type \cite{GuirardelSale}.
Since their goal is quite different, they only consider the action of $\Aut(A_\Gamma)$ on
$\HH_1(R)$ for a particular index $2$ subgroup $R < A_\Gamma$.
While some of the work in this paper could be used to prove, in some cases, largeness or other
properties considered by Guirardel and Sale, any such cases
have already been covered by their paper \cite{GuirardelSale}.

One can try to prove similar results as the above for $\Out(\pi_1(\Sigma))$ where $\Sigma$ is a closed, orientable surface.
In this case, $\Out(\pi_1(\Sigma))$ is virtually the mapping class group $\MCG(\Sigma)$ of $\Sigma$. It was shown by Looijenga,
in the case where $\pi_1(\Sigma)/R$ is finite abelian, and then later by Grunewald, Larsen, Lubotzky, and Malestein, for more
general finite index $R$ (but not all finite index $R$), that the virtual action on $\HH_1(R)$ yields virtual arithmetic
quotients of the mapping class group \cite{GLLM, Looijenga}. A number of other papers
have also investigated these virtual linear representations of $\Aut(F_n)$ and $\MCG(\Sigma)$
for a variety of purposes \cite{FarbHensel, Hadari1, Hadari2, Hadari3, Hadari4, Koberda, Liu, McMullen, PW, Sun}.

\subsection{Outline of paper}
In Section \ref{section:reduction}, we reduce the main theorems to statements about relative automorphism groups of $F_n$.
In Section \ref{section:definingrho}, we define the virtual representation
of the relative automorphism groups and present the main technical theorems and corollaries of the paper. In Sections
\ref{section:upperbound} and \ref{section:lowerbound}, we prove the main technical theorems.
In Section \ref{section:fingrps}, we show that certain finite groups have irreducible rational representations
with the required properties. In section \ref{section:proofofmainthms}, we finish the proof of Theorems \ref{thm:outfnmodtrans},
\ref{thm:mainthm}, and \ref{thm:mainthmsemidprod}.
In Section \ref{section:conditions}, we discuss conditions $\dagger$, and in the appendix, we describe some examples of
$\mc G$ as groups of matrices.

\section{Reducing to subgroups of $\Aut(F_n)$} \label{section:reduction}
In this section, we reduce the proof of the main theorem to statements about relative automorphism groups of $F_n$. While
this is not strictly necessary, it will significantly reduce the amount of technical details in the proof
which still are considerable.
By doing so, we will also produce arithmetic quotients of these relative automorphism groups.
We first start with an elementary lemma so that we may ignore permutation automorphisms.
Recall that $\Aut^0(A_\Gamma) < \Aut(A_\Gamma)$ is the subgroup generated by all dominated transvections, partial
conjugations and inversions. While the next lemma is elementary, we provide a proof for convenience.

\begin{lemma} \label{lemma:onlytandpc}
	The subgroup $\Aut^0(A_\Gamma)$ is of finite index in $\Aut(A_\Gamma)$.
\end{lemma}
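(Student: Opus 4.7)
The plan is to leverage Laurence's generating set and the fact that graph automorphisms of a finite graph form a finite group. By Laurence's theorem cited above, $\Aut(A_\Gamma)$ is generated by four types of automorphisms, three of which (dominated transvections, partial conjugations, inversions) already lie in $\Aut^0(A_\Gamma)$. So only the graphic automorphisms are missing, and since $\Gamma$ is finite, the group $G$ of graphic automorphisms is finite (it is a homomorphic image of the finite group $\Aut(\Gamma)$). The strategy is then to show that $G$ normalizes $\Aut^0(A_\Gamma)$, conclude that $\Aut^0(A_\Gamma)\cdot G$ is a subgroup, and observe that this subgroup already contains all of Laurence's generators and so is all of $\Aut(A_\Gamma)$; the index is then bounded by $|G|$.

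The key step is the normalization claim. Let $\sigma$ be the graphic automorphism induced by a graph automorphism $\bar\sigma\in\Aut(\Gamma)$. Since $\bar\sigma$ permutes vertices while preserving adjacency, it carries $\st(v)$ to $\st(\bar\sigma(v))$ and $\lk(w)$ to $\lk(\bar\sigma(w))$; hence the domination relation is preserved, i.e.\ $v\geq w$ if and only if $\bar\sigma(v)\geq\bar\sigma(w)$, and the connected components of $\Gamma-\st(v)$ are mapped to connected components of $\Gamma-\st(\bar\sigma(v))$. A direct computation on the defining vertex set then shows that conjugation by $\sigma$ sends a dominated transvection $w\mapsto vw$ to the dominated transvection $\bar\sigma(w)\mapsto \bar\sigma(v)\bar\sigma(w)$, sends a partial conjugation by $v$ on a component $C$ to the partial conjugation by $\bar\sigma(v)$ on the component $\bar\sigma(C)$, and sends the inversion of $v$ to the inversion of $\bar\sigma(v)$. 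Thus $\sigma\,\Aut^0(A_\Gamma)\,\sigma^{-1}=\Aut^0(A_\Gamma)$.

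From the normalization claim, $H:=\Aut^0(A_\Gamma)\cdot G$ is a subgroup of $\Aut(A_\Gamma)$. Since $H$ contains every generator from Laurence's list, $H=\Aut(A_\Gamma)$. By the second isomorphism theorem,
$$[\Aut(A_\Gamma):\Aut^0(A_\Gamma)]=[\Aut^0(A_\Gamma)\cdot G:\Aut^0(A_\Gamma)]=[G:G\cap\Aut^0(A_\Gamma)]\leq |G|<\infty,$$
which gives the lemma. The main (and only real) obstacle is verifying the three conjugation identities on the defining generators carefully; everything else is formal. No deeper structure of $A_\Gamma$ is needed beyond Laurence's generating theorem and the invariance of stars, links, and components under a graph automorphism.
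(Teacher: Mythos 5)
Your proposal is correct and follows essentially the same approach as the paper: both rely on Laurence's generating theorem and the observation that conjugation by a graphic automorphism permutes the three types of non-graphic generators (because graph automorphisms preserve stars, links, domination, and components), which shows $\Aut^0(A_\Gamma)$ is normalized by the graphic automorphisms and hence of finite index. The paper phrases the conclusion via normality of $\Aut^0(A_\Gamma)$ and finiteness of the quotient, while you phrase it via $\Aut^0(A_\Gamma)\cdot G$ and the second isomorphism theorem, but these are the same argument.
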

\begin{proof}
	It is easy to check that the conjugate of a transvection (resp. partial conjugation or inversion) by a
	graph automorphism is a transvection (resp. partial conjugation or inversion). Consequently, $\Aut^0(A_\Gamma)$ is a normal
	subgroup, and the quotient $\Aut(A_\Gamma)/\Aut^0(A_\Gamma)$ is generated by graph automorphisms which
	must be finite.
\end{proof}

Since we are only interested in virtual representations of $\Aut(A_\Gamma)$, we can focus instead on
$\Aut^0(A_\Gamma)$. The next lemma implies that the action of $\Aut^0(A_\Gamma)$ descends naturally
to a quotient of $A_\Gamma$. For a subset $X \subseteq A_\Gamma$, let $\langle \langle X \rangle \rangle$ denote
the normal subgroup generated by $X$. Results similar to this lemma were proven in \cite{GuirardelSale}
but for different choices of $W$.

\begin{lemma} \label{lemma:Gam-Pischar}
 	Suppose $W$ is closed under lower bounds. 
	Then, $\langle \langle \Gamma-W \rangle \rangle$ is invariant under $\Aut^0(A_\Gamma)$.
\end{lemma}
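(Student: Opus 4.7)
The plan is to verify invariance on each of the three types of generators of $\Aut^0(A_\Gamma)$ guaranteed by Laurence's theorem, namely dominated transvections, partial conjugations, and inversions. Let $N = \langle\langle V(\Gamma) - W\rangle\rangle$. Since $N$ is normal and each $\phi \in \Aut^0(A_\Gamma)$ is an automorphism, $\phi(N)$ is also a normal subgroup, so to establish $\phi(N) \subseteq N$ it suffices to show that $\phi(v) \in N$ for every $v \in V(\Gamma) - W$; equality $\phi(N) = N$ then follows by applying the same argument to $\phi^{-1}$, which belongs to $\Aut^0(A_\Gamma)$ as well (inverses of transvections, partial conjugations, and inversions are of the same type).

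For inversions, the image of $v \in V(\Gamma) - W$ is either $v$ or $v^{-1}$, both of which lie in $N$. For partial conjugations, the image of any $v \in V(\Gamma) - W$ is either $v$ or $uvu^{-1}$ for some vertex $u$, and normality of $N$ immediately puts this in $N$. The only nontrivial case is that of a dominated transvection $\tau$ associated to a pair $u \geq w$ which sends $w$ to $wu$ (or $uw$) and fixes all other generators. For $v \in V(\Gamma) - W$ with $v \neq w$ we have $\tau(v) = v \in N$, so the one thing to check is the case $v = w \in V(\Gamma) - W$.

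Here is where the hypothesis that $W$ is closed under lower bounds enters. Its contrapositive says that if a vertex lies outside $W$, then every vertex that dominates it also lies outside $W$. Since $w \in V(\Gamma) - W$ and $u \geq w$, this forces $u \in V(\Gamma) - W$, so both $w$ and $u$ lie in $N$ and therefore $\tau(w) = wu \in N$ (and similarly $uw \in N$). This completes the verification on all generators of $\Aut^0(A_\Gamma)$, hence $N$ is invariant under all of $\Aut^0(A_\Gamma)$.

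There is no real obstacle here; the argument is essentially a case check, and the whole content of the lemma is the observation that the downward-closedness of $W$ is exactly the condition needed to ensure that a transvection cannot send a generator outside $W$ to a word that involves a generator inside $W$. One small point worth noting explicitly in the write-up is that normality of $\phi(N)$ means it is enough to check the images of the normal generators $V(\Gamma) - W$ rather than a full generating set of $N$, which keeps the case analysis short.
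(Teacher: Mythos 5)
Your proof is correct and follows essentially the same line as the paper: translate ``$W$ closed under lower bounds'' into ``$V(\Gamma) - W$ closed under upper bounds'' and check each Laurence generator, with the transvection case being the only one that uses the hypothesis. The paper's proof is terser but identical in substance; your added remark that normality of $\phi(N)$ reduces the check to the normal generators $V(\Gamma)-W$, and that equality follows by applying the argument to $\phi^{-1}$, is exactly the bookkeeping the paper leaves implicit.
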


\begin{proof}
	Since $W$ is closed under lower bounds, the complement is closed under upper bounds.
	It then follows easily that $\langle \langle \Gamma - W \rangle \rangle$ is invariant under
	dominated transvections. Showing $\langle \langle \Gamma - W \rangle \rangle$ is invariant under
	any partial conjugation or inversion is straightforward.
\end{proof}

When $W$ satisfies $\dagger$,
the above lemma tells us that $\Aut^0(A_\Gamma)$ descends to an action on $A_\Gamma/\langle \langle \Gamma - W \rangle \rangle \cong F[W]$
where we use $F[W]$ to denote the free group on generators $W$.
We will see that the image of $\Aut^0(A_\Gamma)$ in $\Aut(F[W])$ is, up to inner automorphisms, a relative 
automorphism group of $F[W]$.

\subsection{Relative automorphism groups and their generators}
Consider an arbitrary finite set $W$ and a preorder $\preceq$ on $W$. While we will be interested
in the case when $W$ is a subset of $V(\Gamma)$ and $\preceq$ is the restriction of the domination relation,
the results in this section hold generally.
For $U \subseteq W$, let $F[U]$ be the corresponding free factor of $F[W]$. Let
$$\mc U = \{F[U] \mid U \subseteq W \text{ is closed under upper bounds under } \preceq \}.$$
We let $\Aut(F[W], \mc U)$ be the subgroup of automorphisms mapping each $F[U] \in \mc U$ to itself.
We call this the automorphism group relative to $\mc U$.

In \cite{DayWade}, Day and Wade define relative outer automorphism groups of 
RAAGs. We will only use their results in the case of free groups. They define $\Out^0(F[W])$ to be 
the subgroup generated by inversions, transvections, and partial conjugations, but $F[W]$ is a free group, so
$\Out^0(F[W]) = \Out(F[W])$.
The relative outer automorphism group, denoted $\Out^0(F[W], \mc U)$ is the subgroup of $\Out^0(F[W])$
where each outer automorphism has a representative automorphism in $\Aut(F[W])$ which maps
$F[U]$ to itself, but the representative may depend on $F[U]$. A priori, this seems
different from the group $\Aut(F[W], \mc U)$ we define above which leaves invariant all $F[U]$
simultaneously without conjugation. However,
in this case where $\mc U$ is defined as above from a preorder, we will show
that $\Aut(F[W], \mc U)$ projects to $\Out^0(F[W], \mc U)$. Recall that
$L(v) = \{w \in W \;\; | \;\; v \succeq w\}$.

\begin{lemma} \label{lemma:relautgenTI}
	Let $W$ be a set with a preorder $\preceq$, and let $\mc U$ be defined as above.
	Then, the image of $\Aut(F[W], \mc U)$ in $\Out(F[W])$ is $\Out^0(F[W], \mc U)$. Moreover,
	$\Aut(F[W], \mc U)$ is generated by the transvections and inversions contained in it.
\end{lemma}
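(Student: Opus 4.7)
The inclusion of the image of $\Aut(F[W], \mc U)$ in $\Out^0(F[W], \mc U)$ is immediate from the definitions. The plan for the reverse inclusion and the generation statement is to take a convenient generating set of $\Out^0(F[W], \mc U)$---by Day-Wade's work, the outer classes of transvections, inversions, and partial conjugations compatible with $\mc U$---and to show that each generator lifts to a product of transvections and inversions already inside $\Aut(F[W], \mc U)$, and then to handle the kernel of the projection to $\Out$ similarly.

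A transvection $w \mapsto wv^{\pm 1}$ or $w \mapsto v^{\pm 1}w$ preserves every $F[U] \in \mc U$ if and only if $v \succeq w$, since any $U$ closed under upper bounds and containing $w$ must contain $U(w)$; every such transvection therefore lies in $\Aut(F[W], \mc U)$ already, and inversions are trivially so. For a partial conjugation $\mathrm{pc}_{v, S}$ (conjugating each $w \in S$ by $v$ and fixing the rest) whose outer class is compatible with $\mc U$, the key claim is that at least one of $\mathrm{pc}_{v, S}$ and $\mathrm{pc}_{v^{-1}, W - S - \{v\}}$ lies in $\Aut(F[W], \mc U)$; these two automorphisms represent the same outer class, since they differ by $\iota_{v^{-1}}$. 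The plan is to argue by contradiction: if both fail, then there exist $w_1 \in S$ with $v \not\succeq w_1$ and $w_2 \in W - S - \{v\}$ with $v \not\succeq w_2$; the union $U(w_1) \cup U(w_2)$ is then closed under upper bounds, avoids $v$, meets $S$ (at $w_1$), and is not contained in $S$ (since $w_2$ lies outside $S$), contradicting compatibility of the outer class with $\mc U$.

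Whichever of the two representatives lies in $\Aut(F[W], \mc U)$ has the form $\mathrm{pc}_{v^{\epsilon}, S'}$ with $v \succeq w$ for every $w \in S'$; it can be written as a product over $w \in S'$ of transvections $w \mapsto v^{\epsilon} w$ and $w \mapsto w v^{-\epsilon}$, all of which lie in $\Aut(F[W], \mc U)$ by the first observation. For the kernel $\Aut(F[W], \mc U) \cap \Inn(F[W])$: using that $N_{F[W]}(F[U]) = F[U]$ for nonempty $U$, an inner automorphism $\iota_g$ lies in $\Aut(F[W], \mc U)$ if and only if $g \in F[M]$, where $M = \bigcap_{v \in W} U(v)$ is the set of elements dominating every element of $W$. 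Such $\iota_g$ is generated by the $\iota_m = \mathrm{pc}_{m, W - \{m\}}$ for $m \in M$, each of which is again a product of transvections in $\Aut(F[W], \mc U)$ by the previous construction, since $m \succeq w$ for every $w \in W$. The main technical obstacle is the union-closure argument for lifting partial conjugations, which crucially exploits that $\mc U$ is closed under unions.
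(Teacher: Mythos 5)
Your proposal is correct and follows essentially the same strategy as the paper: invoke Day--Wade's Theorem D, reduce partial conjugations to transvections via the "union of two upper sets" obstruction, and then identify which inner automorphisms lie in $\Aut(F[W],\mc U)$ and write those as products of transvections. The paper phrases the partial-conjugation step by fixing a representative that fixes a chosen $u\in W-L(v)$ and arguing it fixes all of $W-L(v)$, while you argue that one of the two canonical representatives $\mathrm{pc}_{v,S}$ or $\mathrm{pc}_{v^{-1},W-S-\{v\}}$ lies in $\Aut(F[W],\mc U)$; these are the same argument in slightly different packaging, both implicitly resting on malnormality of free factors.
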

\begin{proof}
	We first show that $\Out^0(F[W], \mc U)$ is generated by the transvections and inversions
	contained in it. By Theorem D of \cite{DayWade}, $\Out^0(F[W], \mc U)$ is generated by the
	transvections, inversions, and partial conjugations contained in it, so it suffices to  
	show that any partial conjugation in $\Out^0(F[W], \mc U)$ is a product of transvections.
	
	Let $v \in W$. If $L(v) = W$, then all vertices may be transvected by $v$, and so 
	any partial conjugation by $v$ is a product of transvections by $v$. Suppose then
	that $L(v) \neq W$, and $\phi \in \Out^0(F[W], \mc U)$ is a partial conjugation by $v$.
	Let $u \in W - L(v)$, and let $\varphi \in \Aut(F[W])$ be a representative
	such that $\varphi(u) = u$. We claim that $\varphi(u') = u'$ for all $u' \in W - L(v)$.
	If not, then there is some $u' \in W - L(v)$ such that $\varphi(u') = v^{\pm 1} u' v^{\mp 1}$.
	Then, $U(u) \cup U(u')$ is closed under upper bounds, and $F[U(u) \cup U(u')]$ is not
	invariant under $\varphi$ or any element of $\Inn(F[W]) \cdot \varphi$. This contradicts
	the fact that $\phi \in \Out^0(F[W], \mc U)$.
	
	Let $\mc {TI} < \Aut(F[W], \mc U)$ be the subgroup
	generated by transvections and inversions in $\Aut(F[W], \mc U)$. 
	From the above, we see that the image of both $\mc{TI}$ and $\Aut(F[W], \mc U)$
	in $\Out(F[W])$ is $\Out^0(F[W], \mc U)$. 
	
	We now show $\mc{TI} = \Aut(F[W], \mc U)$.
	Suppose $\psi \in \Aut(F[W], \mc U)$. Then, up to an inner automorphism, it also lies in $\mc{TI}$, and so
	there is some $x \in F[W]$ and $\varphi \in \mc{TI}$ such that $\psi = C_x \circ \varphi$
	where $C_x$ is the inner automorphism conjugating by $x$. Then $C_x = \psi \circ \varphi^{-1} \in \Aut(F[W], \mc U)$.
	Since $C_x$ preserves all free factors $F[U] \in \mc U$, we have $\displaystyle x \in \bigcap_{U \in \mc U} F[U]$.
	I.e. $x$ is the identity or $x \in F[U']$ for some subset $U' \subseteq W$ where $L(u') = W$ for all $u' \in U$.
	In either case, $C_x$ is a product of transvections
	in $\mc{TI}$, and so $\psi \in \mc{TI}$.
\end{proof}

\subsection{Reduction to the relative automorphism group}

Now that we've defined the relative automorphism group of a free group, we would like to reduce our main theorems
to theorems about this group. We first reduce to a closely related group.

\begin{lemma} \label{lemma:AutAGamtoAutFn}
	Suppose $W \subseteq V(\Gamma)$ satisfies $\dagger$. Let the preorder $\preceq$ on $W$ be the restriction of the 
	domination relation to $W$. Let $\mc U$ be as defined above. Then, the image of $\Aut^0(A_\Gamma)$ in 
	$\Aut^0(A_\Gamma/\langle \langle \Gamma - W \rangle \rangle) = \Aut(F[W])$ is $\Aut(F[W], \mc U) \cdot \Inn(F[W])$.
\end{lemma}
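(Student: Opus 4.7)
The plan is to prove both containments, noting that condition 1 (independence of $W$) is what makes the quotient $A_\Gamma/\langle\langle \Gamma - W\rangle\rangle$ equal to the free group $F[W]$ in the first place, so that the statement is meaningful.

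For the containment $\supseteq$, Lemma \ref{lemma:relautgenTI} reduces the task to lifting transvections and inversions of $\Aut(F[W], \mc U)$ together with conjugations generating $\Inn(F[W])$. A transvection $w \mapsto wv$ lying in $\Aut(F[W], \mc U)$ forces $v \in U(w)$, i.e.\ $v \succeq w$ in $W$, and since $\preceq$ is the restriction of the domination preorder on $\Gamma$, the corresponding dominated transvection in $\Aut^0(A_\Gamma)$ descends to it. Inversions lift to inversions of the same vertex. A conjugation by $w \in W$ in $F[W]$ lifts to the inner automorphism of $A_\Gamma$ conjugating by $w$, which on vertex generators is the product of partial conjugations by $w$ over the components of $\Gamma - \st(w)$, hence lies in $\Aut^0(A_\Gamma)$.

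For the containment $\subseteq$, I would check that each generator of $\Aut^0(A_\Gamma)$ descends into $\Aut(F[W], \mc U) \cdot \Inn(F[W])$. Inversions and dominated transvections involving any vertex outside $W$ descend to the identity on $F[W]$, while a dominated transvection $w \mapsto wv$ with both $v, w \in W$ lies directly in $\Aut(F[W], \mc U)$, as does any inversion of a vertex of $W$.

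The main obstacle is the partial conjugation case, and this is where condition 4 plays its essential role. A partial conjugation by $v \notin W$ descends trivially, so assume $v \in W$. Condition 4 then gives a single component $C_0$ of $\Gamma - \st(v)$ containing all of $W - L(v)$. For a partial conjugation $P_{v,C}$ with $C \neq C_0$, disjointness of components forces $W \cap C \subseteq L(v)$, so the descent of $P_{v,C}$ conjugates each such $w$ by $v$ and fixes the rest of $W$; since $v \succeq w$, each single-vertex conjugation $w \mapsto vwv^{-1}$ decomposes as a product of two dominated transvections in $\Aut(F[W], \mc U)$. For the remaining partial conjugation $P_{v,C_0}$, I would compose on the left with the inner automorphism of $A_\Gamma$ conjugating by $v^{-1}$: the composite descends to an automorphism fixing all of $W \cap C_0$ and acting on each remaining $w \in W - C_0 - \{v\} \subseteq L(v)$ as conjugation by $v^{-1}$, again a product of two dominated transvections. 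Thus the image of $P_{v,C_0}$ lies in $\Inn(F[W]) \cdot \Aut(F[W], \mc U)$. The key technical point is that condition 4 is precisely what allows a single inner automorphism to absorb the partial-conjugation action on all vertices outside $L(v)$; without it, the residue could fail to lie in $\Aut(F[W], \mc U)$.
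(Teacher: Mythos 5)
Your proof is correct and follows essentially the same route as the paper's: both directions are handled generator by generator, using Lemma~\ref{lemma:relautgenTI} for the $\supseteq$ containment, and condition~4 of~$\dagger$ to handle the partial conjugation by $v\in W$ over the distinguished component $C_0$ by composing with the inner automorphism $C_{v^{-1}}$. The only cosmetic difference is that you spell out the component case split ($C=C_0$ vs.\ $C\neq C_0$) slightly more explicitly than the paper's phrasing, but the substance of the argument is identical.
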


Note that here there are two preorders. To remove the ambiguity, we specify that, 
in the proof below, $L(v)$ denotes the lower bounds in $V(\Gamma)$ of $v$ under the domination relation.

\begin{proof}
	First, we observe that the image of $\Aut^0(A_\Gamma)$ contains $\Aut(F[W], \mc U) \cdot \Inn(F[W])$.
	Clearly, $\Inn(F[W])$ is the image of $\Inn(A_\Gamma)$. Moreover, by Lemma \ref{lemma:relautgenTI},
	$\Aut(F[W], \mc U)$ is generated by inversions and the transvections contained it. The inversions
	are clearly in the image of $\Aut^0(A_\Gamma)$. Since $\mc U$
	is ultimately defined via the domination relation restricted to $W$, transvections in $\Aut(F[W], \mc U)$ come from 
	corresponding dominated transvections in $\Aut^0(A_\Gamma)$. 
	
	To prove that the image of $\Aut^0(A_\Gamma)$ is contained in $\Aut(F[W], \mc U) \cdot \Inn(F[W])$, we analyze case
	by case the action of the generators of $\Aut^0(A_\Gamma)$ on $F[W] = A_\Gamma/\langle \langle \Gamma - W \rangle \rangle$. Clearly, 
	transvections, inversions or partial conjugations by vertices in $\Gamma - W$ act trivially. Transvections or inversions
	by elements in $W$ map to the corresponding transvections or inversions in $\Aut(F[W], \mc U)$.
	
	It remains to analyze the action of a partial conjugation by an arbitrary vertex $v \in ßW$.
	Since $W$ satisfies $\dagger$, any such $v$ divides $W$ trivially, and thus either
	conjugates a single $u \in L(v) \subseteq W$, no vertex of $W$, or all vertices of $W - L(v)$. 
	In the first case, $\varphi$ is a composition of two dominated transvections of $u$ by $v$ which are both in $W$. 
	In the second case, $\varphi$ acts trivially.
	
	Suppose instead we are in the third case. Then $C_{v^{-1}} \circ \varphi$, where $C_{v^{-1}} \in \Aut(A_\Gamma)$ is the inner
	automorphism by $v^{-1}$, conjugates all of $L(v)$ by $v^{-1}$ and fixes all vertices in $W - L(v)$.
	Thus, $C_{v^{-1}} \circ \varphi$ is a composition of dominated transvections of all $u \in L(v) - v \subseteq W$ 
	by $v \in W$. Moreover, $C_{v^{-1}}$ acts as an inner automorphism on $A_\Gamma/\langle \langle \Gamma - W \rangle \rangle = F[W]$, and 
	dominated transvections in $\Aut(A_\Gamma)$ map
	to the corresponding transvection in $\Aut(F[W], \mc U)$. Since the images of $C_{v^{-1}} \circ \varphi$ and
	$C_{v^{-1}}$ lie in $\Aut(F[W], \mc U) \cdot \Inn(F[W])$, so must the image of $\varphi$.
\end{proof}

Keeping the inner automorphisms around would be unwieldy later, so we show that, for the representations of interest,
we may dispense with them. In order to do that, we must take a first step towards defining the representation. Let $R < F[W]$
be a finite index normal subgroup. Since $R$ need not be invariant under all automorphisms, we define the following finite index
subgroup of $\Aut(F[W])$ where $q: F[W] \to F[W]/R$ is the quotient map 
$$\Aut(F[W]; R) = \{  \varphi \in \Aut(F[W]) \;\; | \;\; q \circ \varphi = q \}.$$
We let $\Aut(F[W], \mc U; R) = \Aut(F[W], \mc U) \cap \Aut(F[W]; R)$. Note that this may be a proper subgroup
of the group which merely preserves $R$. Our first representation is then $\nu: \Aut(F[W];R) \to \Aut(\HH_1(R))$
defined by restriction to $R$ and passing to the quotient $R/[R,R] = \HH_1(R)$.

\begin{lemma} \label{lemma:transvonly}
	Let $W$ be a finite set, $\preceq$ a preorder on $W$, $\mc U$ as above, and $\nu$ as above.
	Let $\Delta = (\Aut(F[W], \mc U) \cdot \Inn(F[W])) \cap \Aut(F[W]; R)$.
	The image $\nu(\Aut(F[W], \mc U; R))$ is of finite index in $\nu(\Delta)$.
\end{lemma}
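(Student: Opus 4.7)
The plan is to show that every element of $\Delta$ decomposes, up to $\Aut(F[W], \mc U; R)$, as a fixed element from a finite list times an inner automorphism by an element that is central modulo $R$, and that such inner automorphisms yield only finitely many values under $\nu$.

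First I would set $N = F[W]/R$, a finite group. Every $\psi \in \Delta$ fixes $N$ pointwise, so if $\psi = \varphi C_x$ with $\varphi \in \Aut(F[W], \mc U)$, then $\varphi$ necessarily preserves $R$; let $J \subseteq \Aut(F[W], \mc U)$ be the subgroup of such $R$-preserving automorphisms, and $\pi \colon J \to \Aut(N)$ the induced homomorphism. Its kernel is exactly $\Aut(F[W], \mc U; R)$. Let $I = \pi^{-1}(\Inn(N))$; since $\Inn(N)$ is finite, $I$ contains $\Aut(F[W], \mc U; R)$ with finite index. Fix a finite set of coset representatives $\{\alpha_j\}$, and for each $j$ fix $z_j \in F[W]$ with $\pi(\alpha_j) = c_{z_j R}$, conjugation by $z_j R$ in $\Aut(N)$. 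The triviality of $\psi$ on $N$ forces $\pi(\varphi) = c_{x^{-1} R}$, which is inner, so $\varphi \in I$.

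Next I would decompose $\psi$ concretely. Write $\varphi = \varphi_0 \alpha_j$ with $\varphi_0 \in \Aut(F[W], \mc U; R)$. Combining $\pi(\alpha_j) = c_{z_j R}$ with $\pi(\psi) = 1$ in $\Aut(N)$ gives $z_j x \in Z$, where $Z$ is the preimage in $F[W]$ of the center $Z(N)$. Setting $y = z_j x$ and $\beta_j := \alpha_j C_{z_j^{-1}}$, a direct computation yields $\psi = \varphi_0 \beta_j C_y$. Here $\beta_j$ lies in $\Delta$ (its image in $\Aut(N)$ is $c_{z_j R} \cdot c_{z_j^{-1} R} = \id$) and depends only on $j$, while $C_y \in \Inn(F[W]) \cap \Aut(F[W]; R)$.

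Finally, I would show that $\nu$ has finite image on $\Inn(F[W]) \cap \Aut(F[W]; R) = \{C_y : y \in Z\}$: for $r \in R$, $C_r$ restricts to an inner automorphism of $R$, hence acts trivially on $\HH_1(R)$, so $\nu(C_y)$ depends only on $yR \in Z/R = Z(N)$, which is finite. Combining with the decomposition, $\nu(\Delta)$ is covered by the finitely many sets $\nu(\Aut(F[W], \mc U; R)) \cdot \nu(\beta_j) \nu(C_y)$; each nonempty such set is a single right coset of $\nu(\Aut(F[W], \mc U; R))$ in $\nu(\Delta)$, giving finite index. The main obstacle is purely bookkeeping: one must check that the $\beta_j$ actually lie in $\Delta$ and that $z_j x \in Z$ is genuinely forced by $\psi \in \Aut(F[W]; R)$, after which the rest is routine.
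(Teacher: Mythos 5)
Your proof is correct, but it takes a different route from the paper's. The paper picks a finite-index characteristic subgroup $R' < R$ of $F[W]$ and observes that (i) the group $I(R')$ of conjugations by elements of $R'$ is normal in $\Aut(F[W])$ and of finite index in $\Inn(F[W])$, so $\Aut(F[W], \mc U; R) \cdot I(R')$ has finite index in $\Delta$, and (ii) $I(R')$ lies in the kernel of $\nu$; this disposes of the lemma in a few lines. You instead decompose a general $\psi \in \Delta$ explicitly as $\varphi_0 \beta_j C_y$ with $\varphi_0 \in \Aut(F[W], \mc U; R)$, with $\beta_j$ drawn from a finite list (indexed by cosets of $\Aut(F[W], \mc U; R)$ inside the preimage of $\Inn(N)$), and with $y$ ranging over the preimage $Z$ of the center $Z(N)$, then note that $\nu(C_y)$ only depends on $yR \in Z(N)$, a finite set. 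Both arguments hinge on the same fact — that the ``extra'' inner automorphisms in $\Delta$ contribute only finitely much to the image of $\nu$ — but you isolate the finiteness through $Z(N)$ and $\Inn(N)$, whereas the paper normalizes away the entire inner contribution at once by shrinking $\Inn(F[W])$ to the $\ker\nu$-contained subgroup $I(R')$. The paper's argument is shorter and avoids tracking coset representatives, but yours has the minor merit of being more explicit about exactly which inner automorphisms appear; the details you flag as ``bookkeeping'' (that $\beta_j \in \Delta$ and that $z_j x \in Z$) do check out.
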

\begin{proof}
	Let $R' < R$ be a finite index characteristic subgroup of $F[W]$.
	Let $I(R') < \Inn(F[W])$ be the subgroup of conjugations by elements in $R'$. Since $R'$ is characteristic,
	$I(R')$ is normal in $\Aut(F[W])$. Since $I(R')$ is of finite index
	in $\Inn(F[W])$, we conclude that
	$\Aut(F[W], \mc U) \cdot I(R')$ is a finite index subgroup of $\Aut(F[W], \mc U) \cdot \Inn(F[W])$.
	
	Observe that $I(R') < \Delta$. Consequently, the following is a finite index subgroup
	of $\Delta$:
	$$ \begin{array}{rl} & (\Aut(F[W], \mc U)  \cdot I(R')) \cap \Delta \\
	 = & (\Aut(F[W], \mc U) \cap \Delta) \cdot I(R')\\
	 = &\Aut(F[W], \mc U; R) \cdot I(R'). \end{array} $$
	Moreover, $I(R')$ acts trivially on $R/[R, R]$, and hence lies in the kernel of $\rho$. Consequently,
	$\nu(\Aut(F[W], \mc U; R) \cdot I(R')) = \nu(\Aut(F[W], \mc U; R))$.
\end{proof}

The group $\Delta$ in the lemma is the image of a finite index subgroup of $\Aut^0(A_\Gamma)$. Thus,
any virtual quotient of $\Aut(F[W], \mc U)$  produced via $\nu$ is also a virtual quotient of $\Aut^0(A_\Gamma)$.

\subsection{Relating actions on homologies}
In the introduction, we briefly mentioned that our virtual arithmetic quotients arise from the action of
$\Aut(A_\Gamma)$ on the first homology of some finite index subgroup of $A_\Gamma$. For the sake of honesty, we 
show how virtual representations defined via $\Aut(A_\Gamma) \to \Aut(F[W], \mc U)$ and $\nu$
can be interpreted as being induced from an action of $\Aut(A_\Gamma)$ on some homology
of a finite index subgroup of $A_\Gamma$. Let $R' < A_\Gamma$ be the preimage of $R$ under
the quotient map $A_\Gamma \to F[W]$. Then $R = R'/\langle\langle \Gamma - W \rangle \rangle$
and similarly $\HH_1(R')/\langle\langle \Gamma - W \rangle \rangle = \HH_1(R)$. If we let
$$\Aut(A_\Gamma; R) = \{  \varphi \in \Aut(A_\Gamma) \;\; | \;\; q' \circ \varphi = q' \}$$
where $q' : A_\Gamma \to A_\Gamma/R'$ is the quotient map, then the action of
$\Aut(A_\Gamma; R)$ on $\HH_1(R')/\langle\langle \Gamma - W \rangle \rangle$ is equivalent
to the action obtained by first projecting to $\Aut(F[W], \mc U)$ and applying $\nu$.

\section{The virtual representations of $\Aut(F[W], \mc U)$} \label{section:definingrho}
Let $W$ be a finite set, and let $\preceq$ be a preorder on $W$. Let $\mc U$ and $\Aut(F[W], \mc U)$ be 
as in the previous section.
The virtual representations of $\Aut(F[W], \mc U)$ that we are interested in are both an extension
and projection of $\nu$ defined in the previous section. Also as in the previous section,
the representations can be defined virtually on $\Aut(F[W])$, not just the subgroup $\Aut(F[W], \mc U)$,
and so we will only restrict to $\Aut(F[W], \mc U)$ or $\Aut(F[W], \mc U; R)$ as it becomes necessary.

We will modify $\nu$ in
a few ways. First, we consider the action on rational homology, $\HH_1(R; \Q) = \HH_1(R) \otimes_\Z \Q$.
Second we extend this in a natural way to an action on cellular $1$-chains where we view $R$
as the fundamental group of the corresponding cover of a $K(F[W], 1)$. Finally, using the fact that the
action is by $\Q[G]$-module automorphisms where $G = F[W]/R$, we will restrict to some isotypic component.

\subsection{Defining the action on cellular $1$-chains} \label{section:hatY}
We identify $F[W] \cong \pi_1(Y, y)$ where $Y$ is a graph with $1$ vertex and one oriented
edge for each $w \in W$ which, by abuse of notation, will also be referred to as $w$.
Letting $\hat Y \to Y$ be the cover corresponding to a finite index normal subgroup $R < F[W]$,
we see that $\HH_1(\hat Y; \Q) = \HH_1(R; \Q)$. The action of $G$ on $\hat Y$ induces a left action
on $\HH_1(\hat Y; \Q)$.  \\

The cover $\hat Y$ inherits an orientation from $Y$. Choose a basepoint $\hat y$
in the $0$-skeleton of $\hat Y$. Let $\hat w$ denote that
oriented edge in $\hat Y$ which lies above the loop $w$ and is outgoing from $\hat y$. See
Figure \ref{figure:graphcover}.
Since the cover is regular, the $0$-skeleton of $\hat Y$ is precisely the orbit $G \cdot \hat y$,
and the lifts of loop $w$ are precisely the edges in the the $G$-orbit $G \cdot \hat w$.

\begin{figure}[h]
	\centering
	\labellist
	\pinlabel $u$ at 25 78
	\pinlabel $v$ at 108 78
	\pinlabel $w$ at 67 -5
	\pinlabel $y$ at 67 58
	\pinlabel $\hat y$ at -2 121
	\pinlabel $\hat u$ at -5 181
	\pinlabel $\hat w$ at 60 142
	\pinlabel $\hat v$ at 21 159
	\endlabellist
	\includegraphics[width= .3 \textwidth]{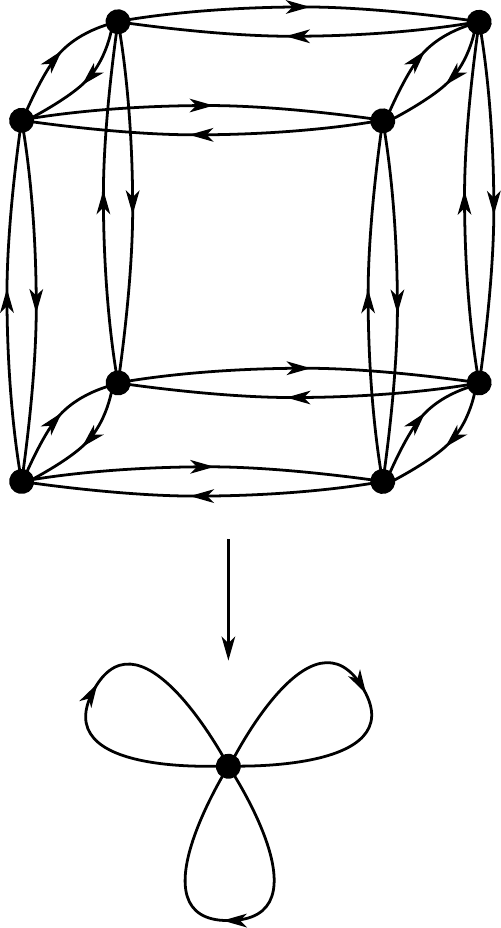}
	\caption{ A regular cover of $Y$ with $\hat u, \hat v, \hat w$ labeled. }
	\label{figure:graphcover}
\end{figure}

Following \cite{Hatcher}[Section 2.2],
let $C_1(\hat Y; \Q) = \HH_1(\hat Y^1, \hat Y^0; \Q)$ be the cellular $1$-chain group.
There is a well-defined $G$-equivariant action of $\Aut(F[W]; R)$
on cellular $1$-chains $C_1(\hat Y; \Q)$ defined as follows.
Any element $\varphi \in \Aut(F[W]; R)$ defines a homotopy equivalence $f\colon Y \to Y$ fixing $Y^0 \subset Y$
which lifts to a homotopy equivalence $\hat f\colon \hat Y \to \hat Y$ where $\hat f(\hat Y^0) = \hat f(\hat Y^0)$.
Since $\Aut(F[W]; R)$ acts trivially on $G = F[W]/R$, one can deduce that $\hat f$ commutes with the deck group $G$,
and so it acts as a $\Q[G]$-module automorphism on the $\Q[G]$-module $C_1(\hat Y; \Q)$.
Also, $\hat f_*\colon C_1(\hat Y, \Q) \to C_1(\hat Y; \Q)$ restricts to the action of $\varphi$ on 
$\HH_1(\hat Y; \Q) = \HH_1(R; \Q)$.
We let $\tilde{\nu}\colon \Aut(F[W]; R) \to \Aut_{\Q[G]}(C_1(\hat Y, \Q))$ be this action. 

We can also describe this action in the following way. Given $v \in W$, then $\varphi(v)$ is some reduced
word in $F[W]$ which defines a canonical loop in $Y$ whose lift defines a canonical path in $\hat Y$.
This in turn defines a canonical $1$-chain in $C_1(\hat Y, \Q)$ which we call $\overline{\varphi(v)}$.
Then, 
$\tilde{\nu}(\varphi)$ is the unique automorphism satisfying $\nu(\varphi)(\hat v) = \overline{\varphi(v)}$.

\subsection{Isotypic components} \label{section:isotypcomps}
Suppose $\Q[G] = B \times B'$ for some subrings $B, B'$ where $B$ 
is a simple $\Q$-algebra.
Moreover, let $1_B$ be the unit of $B$, and define
the $B$-isotypic component of any $\Q[G]$-module $M$ as $1_B \cdot M$. 
Let $C_1(\hat Y; \Q)_B$ (resp. $\HH_1(\hat Y; \Q)_B$) be the $B$-isotypic component of $C_1(\hat Y; \Q)$
(resp. $\HH_1(\hat Y; \Q)$). Note that any $\Q[G]$-module automorphism
preserves the $B$-isotypic component and acts as a $B$-module automorphism.
We can thus restrict the action to any isotypic component, i.e. to $\Aut_B(C_1(\hat Y; \Q)_B)$
or $\Aut_B(\HH_1(\hat Y; \Q)_B)$. Given a choice of isotypic component $B$, we
define 
$$\tilde{\rho}_B \colon \Aut(F[W]; R) \to \Aut_B(C_1(\hat Y; \Q)_B)$$ 
$$\rho_B \colon \Aut(F[W]; R) \to \Aut_B(\HH_1(\hat Y; \Q)_B)$$ 
as restrictions of $\tilde \nu$. Note that $\rho_B$ is also a restriction of $\nu$.

\begin{remark}
	It is well known that $\Q[G]$ fully decomposes as a product of simple $\Q$-algebras, one for each
	irreducible $\Q$-representation of $G$ up to $\Q$-isomorphism. In particular, there is always
	a factor $B \cong \Q$ where the projection $\Q[G] \to \Q$ maps $g \mapsto 1$ for all $g \in G$.
	We call this the trivial factor of $\Q[G]$.
\end{remark}

\subsection{Linear algebraic groups} \label{section:linalggrp}
We now define more refined subgroups of $\Aut_B(\HH_1(\hat Y; \Q)_B)$ and
of $\Aut_B(C_1(\hat Y; \Q)_B)$ which are equivalent to $\mc G$ and $\tilde{\mc G}$
defined in the introduction. We will assume throughout this section that $B$ is not the trivial
factor of $\Q[G]$. We begin first with $B$-automorphisms of $B$-modules and then proceed to
$\mcO$-modules where $\mcO$ will be an order of $B$.
First, note that
$C_1(\hat Y; \Q)$ is a free $\Q[G]$-module on $W$, with generators $\{\hat w \mid w \in W\}$. Consequently,
if we let $\hat w_B = 1_B \hat w$, then 
$\{\hat w_B \mid w \in W\}$ is a free generating set of $C_1(\hat Y; \Q)_B \cong B^n$.
Since there are no $2$-dimensional cells in $\hat Y$, we have that $\HH_1(\hat Y; \Q)$ is a subspace
of $C_1(\hat Y; \Q)$, and moreover it is a $\Q[G]$-submodule since it is invariant under $G$.
By a theorem of Gasch\"utz \cite{Gas, GLLM},$$\HH_1(\hat Y; \Q) \cong \Q[G]^{n-1} \oplus \Q,$$ and so
$\HH_1(\hat Y; \Q)_B \cong B^{n-1}$, since $B$ is not the trivial factor. In section \ref{section:lowerbound},
we will explicitly describe $\HH_1(\hat Y; \Q)_B$ as a submodule of $C_1(\hat Y; \Q)_B$.

In the definition of our subgroups, we use reduced norms, which we define now. 
Suppose $A$ is a finite-dimensional central simple $K$-algebra for some field $K$. 
Then, for some finite field extension $L \geq K$
and some integer $m$, there is an isomorphism $\psi\colon A \otimes_K L \cong \Mat_m(L)$. For any $a \in A$,
the reduced norm of $a$ (over $K$), which we will denote $\nrd(a)$, is the determinant of $\psi(a \otimes 1)$. The reduced norm is known
to lie in $K$ and to be independent of the choice of $L$ and the isomorphism $\psi$ \cite{Reiner}[Section 9]. Given a free $B$-module $B^k$, 
we have the canonical embedding $\Aut_B(B^k) \hookrightarrow \End_B(B^k)$.
Since $B$ is a finite-dimensional simple $\Q$-algebra, so is $\End_B(B^k)$, and, moreover, 
$\End_B(B^k)$ is a central simple algebra over its center which is a finite extension of $\Q$.
In this way, we define the reduced norm of an element of $\Aut_B(B^k)$ as that of the corresponding element in $\End_B(B^k)$.

For any subset $U \subseteq W$, we define
$C_B[U]$ to be the $B$-submodule of $C_1(\hat Y, \Q)_B$ generated by $\{ \hat u_B \;\; | \;\; u \in U\}$.
We denote the equivalence class of $w \in W$ under the preorder $\preceq$ by $[w]$.
We define $\tilde{\mc G}(B)$ to be the subgroup of $\Aut_B(C_1(\hat Y, \Q)_B)$ consisting
of automorphisms which 
\begin{enumerate}
	\item preserve the submodules $C_B[U(v)]$ for all $v \in W$,
	\item when restricted to $C_B[U(v)]$, have reduced norm $1$,
	\item act trivially on $C_B[U(v)]/C_B[U'(v)]$ for all $v$ with trivial equivalence classes,
	\item preserve $\HH_1(\hat Y; \Q)_B$,
	\item act trivially on $C_1(\hat Y, \Q)_B/\HH_1(\hat Y; \Q)_B$.
\end{enumerate}
We let $\mc G(B)$ to be the image of the restriction map $\Aut_B(C_1(\hat Y, \Q)_B) \to \Aut_B(\HH_1(\hat Y; \Q)_B)$.

Recall that an order $\mcO \subseteq B$ is a subring of $B$ which spans $B$ as a $\Q$-vector space
and is isomorphic to a free $\Z$-module. Although orders always exist in finite-dimensional simple $\Q$-algebras,
here we can explicitly take
$\mcO \subseteq B$ to be the image of $\Z[G]$ under the projection $\Q[G] \to B$. Then, the
subgroup of $\tilde{\mc G}(B)$ preserving the $\mcO$-span of $\{ \hat v_B \;\; |\;\; v \in W\}$ is
canonically isomorphic to the group $\tilde{\mc G}$ defined in the introduction (where
we take $H_\mcO$ as the intersection of $\HH_1(\hat Y; \Q)_B$ with the $\mcO$-span of the $\hat v_B$). Similarly,
$\mc G$ is canonically isomorphic to the subgroup of 
$\mc G(B)$ preserving the intersection of $\HH_1(\hat Y; \Q)_B$ and
the $\mcO$-span of $\{ \hat v_B\;\; |\;\; v \in W\}$. A finite index subgroup of $\mc G$
also preserves the $\mcO$-span of some free generating set of $\HH_1(\hat Y; \Q)_B \cong B^{n-1}$.

We break our main technical theorems into two pieces, one providing an upper bound on the image
of $\Aut(F[W], \mc U; R)$
and the other a lower bound. A priori, $\rho_B(\Aut(F[W], \mc U; R))$ lies in $\Aut_B(\HH_1(\hat Y; \Q)_B)$,
so we must prove it virtually lies in $\mc G$. Originally, we needed an extra assumption on the
finite group $G$ to verify condition 2 was satisfied in the definition of $\mc G$, but since the announcement
that $\Aut(F_m)$ has property (T) for $m \geq 5$, even this assumption is not strictly necessary \cite{KKN, KNO}.
Nevertheless, we show that the image lies virtually in $\mc G$ under 
any of three assumptions including that $\Aut(F_m)$ has property (T) for $m \geq 5$.
There are also cases of preorders $\preceq$ (all trivial equivalence classes) and $B$ (its center is $\Q$) where condition 2 is
virtually satisfied automatically.
We state the analogous result for $\tilde \rho_B$ and $\tilde{\mc G}$.

\begin{theorem} \label{thm:maintechthmupperbound}
	Let $W$ be a finite set with a preorder, and let $\mc U$ be as defined above. 
	Let $G = F[W]/R$ be finite and $B$ a nontrivial simple factor of $\Q[G]$. Let
	$\mcO$ be the image of $\Z[G]$ in $\Q[G]$ and $\tilde{\mc G} < \tilde{\mc G}(B)$ the subgroup
	preserving the $\mcO$-span of $\{\hat v_B \mid v \in W\}$.
	Let $\tilde{\rho}_B$ be the representation
	defined above. Then, $\tilde{\rho}_B(\Aut(F[W], \mc U; R))$
	virtually lies in $\tilde {\mc G}$ if any of the following hold.
	\begin{enumerate}
		\item  $G$ is a metabelian group.
		\item  $B \cong \Mat_m(\Q)$ for some $m \in \N$.
		\item  $\Aut(F_m)$ has property $(T)$ for all $m \geq 5$.
	\end{enumerate}
\end{theorem}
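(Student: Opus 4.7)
The plan is to verify the five conditions defining $\tilde{\mc G}$, together with the $\mcO$-span preservation, for the image of $\tilde\rho_B$ on a finite index subgroup of $\Aut(F[W], \mc U; R)$. By Lemma \ref{lemma:relautgenTI}, $\Aut(F[W], \mc U)$ is generated by the transvections and inversions it contains, so I would start by computing the images of these generators using the geometric description of $\tilde\nu$ via the cover $\hat Y$. A transvection $\tau_{w,v}\colon w \mapsto wv$ (with $v \succeq w$, $v \neq w$) induces $\hat w \mapsto \hat w + w \cdot \hat v$ on cellular $1$-chains, which projects to $\hat w_B \mapsto \hat w_B + w_B \hat v_B$ on the $B$-isotypic component, a unipotent ``elementary'' automorphism of the form $I + w_B E_{v,w}$ fixing the remaining basis vectors. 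An inversion of $v$ acts by $\hat v_B \mapsto -\hat v_B$.

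With these formulas in hand, conditions 1, 3, 4, 5 and $\mcO$-span preservation follow by direct inspection (after passing to a finite index subgroup when needed). Condition 4 is automatic because $\tilde\nu$ is a chain map. Condition 5 (trivial action on $C_1(\hat Y; \Q)_B / \HH_1(\hat Y; \Q)_B \cong \mathrm{Im}(\partial)$) holds since any $\varphi \in \Aut(F[W]; R)$ lifts to a map fixing the $0$-skeleton of $\hat Y$ pointwise (as $\varphi$ acts trivially on $G$). Condition 1 follows from the definition of $\Aut(F[W], \mc U)$: each transvection $\tau_{w,v}$ in this group has $v \succeq w$, so whenever $w \in U(v')$ we also have $v \in U(v')$, and $C_B[U(v')]$ is invariant. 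The $\mcO$-span is preserved by integrality of the chain-level action. Condition 3 is satisfied by transvections by inspection but is violated by inversions of vertices with trivial equivalence class; this obstruction is cleared by passing to the kernel of the natural sign homomorphism $\Aut(F[W], \mc U) \to (\mathbb{Z}/2)^{|W|}$, a subgroup of index at most $2^{|W|}$.

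The main obstacle is condition 2, requiring reduced norm $1$ on each restriction to $C_B[U(v)]$. The subtlety is that $\tilde\rho_B$ is defined only on $\Aut(F[W]; R)$ (where the chain-level action is $\Q[G]$-equivariant), so a given $\varphi \in \Aut(F[W], \mc U; R)$---although expressible as a product of transvections and inversions inside the larger group $\Aut(F[W], \mc U)$---does not factor as a corresponding product of unipotent and sign matrices inside $\tilde\rho_B(\Aut(F[W]; R))$. The strategy is to view the reduced-norm-on-restriction as a homomorphism $\Aut(F[W], \mc U; R) \to Z(B)^*$ and argue that its image is finite, so that a finite index kernel satisfies condition 2. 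Under hypothesis (ii), $Z(B) = \Q$ and the image lies in $\{\pm 1\}$ by integrality of the preserved lattice. Under hypothesis (iii), property $(T)$ for $\Aut(F_m)$ with $m \geq 5$ forces every homomorphism from the relevant finite index subgroup of $\Aut(F[W])$ to an abelian group to have finite image, and one bootstraps this to $\Aut(F[W], \mc U; R)$. Under hypothesis (i), for metabelian $G$ one exploits the structure of $\Q[G]$ as a twisted group ring over an abelian group algebra to control the unit group of the order $\mcO$ and thereby bound the reduced-norm image. I expect case (i) to be the most delicate step, as it relies on specifically metabelian representation-theoretic input rather than a soft general principle.
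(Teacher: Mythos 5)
Your generator-based computation is a sound way to verify conditions 1, 4, 5, and $\mcO$-span preservation; those conditions are in fact satisfied by every element of $\Aut(F[W], \mc U; R)$, not merely by the transvection/inversion generators of the (larger) group $\Aut(F[W], \mc U)$, so the fact that you are only allowed to apply $\tilde\rho_B$ to the finite-index subgroup $\Aut(F[W], \mc U; R)$ does no harm there.

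However, your treatment of condition 3 has a genuine gap, and the ``sign homomorphism'' fix is insufficient. Two issues. First, your formula for the inversion is wrong: if $\iota$ inverts $v$ then $\tilde\nu(\iota)(\hat v)=\overline{v^{-1}}=-q(v)^{-1}\hat v$, so $\tilde\rho_B(\iota)$ scales $\hat v_B$ by $-1_B q(v)^{-1}$, not by $-1$. Second and more seriously, products of transvections inside $\Aut(F[W], \mc U; R)$ also fail condition 3. By Lemma \ref{lemma:actiononwords} such $\varphi$ send $v \mapsto x_1 v^{\pm 1} x_2$ with $x_1, x_2 \in F[U'(v)]$, and by Lemma \ref{lemma:foxcalc} the coefficient of $\hat v$ in $\tilde\nu(\varphi)(\hat v)$ is $\pm q(x_1)$, which is generally a nontrivial element of $G$; its image $\pm q(x_1)_B$ in $B$ need not be $\pm 1_B$, and in fact need not be $\pm 1$ at all. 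A concrete example: if $q(u)$ has order $4$ and $u \succ v$, then $\varphi\colon v \mapsto u^2 v u^{-2}$, $u \mapsto u$ lies in $\Aut(F[W], \mc U; R)$, is a product of transvections, fixes the sign data, yet acts on $C_B[U(v)]/C_B[U'(v)]$ by $q(u)^2_B$. So the kernel of your sign homomorphism still contains offenders. The correct observation, which is what the paper uses in Lemma \ref{lemma:finallyniceform}, is that the induced map $\Aut(F[W], \mc U; R) \to \Aut(C[U(v)]/C[U'(v)])$ has image contained in $\{\pm g : g \in G\}$ and hence is finite, so one passes to its kernel (intersected over all such $v$); this finite-index subgroup is in general strictly smaller than the kernel of the sign homomorphism.

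On condition 2 your overall framing matches the paper (finiteness of an abelian quotient in cases (ii)/(iii), representation-theoretic input for metabelian $G$ in case (i)), but you should note that the reduced-norm condition must be verified on each restriction to $C_B[U(v)]$ separately, not just on the whole module. The paper handles this by a retraction trick: the endomorphism $s$ of $\Aut(F[W], \mc U; R)$ that acts as $\varphi$ on $F[U(v)]$ and as the identity on the complementary free factor has the property that $\tilde\rho_B(s(\varphi))$ is block diagonal with blocks $\tilde\rho_B(\varphi)|_{C_B[U(v)]}$ and $\id$, so $\nrd(\tilde\rho_B(\varphi)|_{C_B[U(v)]}) = \nrd(\tilde\rho_B(s(\varphi)))$, reducing the subspace condition to the global one. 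For case (i) the paper then invokes Grunewald--Lubotzky's Proposition 8.7 directly rather than re-deriving unit-group bounds from the metabelian structure; your sketch gestures in the right direction but the paper's route is considerably shorter.
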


\begin{cor} \label{cor:maintechcorupperbound}
	Let notation and assumptions be as in Theorem \ref{thm:maintechthmupperbound}. Let 
	$\rho_B$ be the representation defined above and $\mc G$ the restriction of $\tilde{\mc G}$ to $\HH_1(\hat Y; \Q)_B$. 
	Then, $\rho_B(\Aut(F[W], \mc U; R))$ virtually lies in $\mc G$ if any of the following hold.
	\begin{enumerate}
		\item  $G$ is a metabelian group.
		\item  $B \cong \Mat_m(\Q)$ for some $m \in \N$.
		\item  $\Aut(F_m)$ has property (T) for all $m \geq 5$.
	\end{enumerate}
\end{cor}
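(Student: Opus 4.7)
The plan is to deduce the corollary directly from Theorem \ref{thm:maintechthmupperbound} by restricting $\tilde \rho_B$ to the invariant submodule $\HH_1(\hat Y; \Q)_B$. Under any of the three hypotheses, the theorem furnishes a finite index subgroup $\Lambda \leq \Aut(F[W], \mc U; R)$ such that $\tilde \rho_B(\Lambda) \subseteq \tilde{\mc G}$. Condition 4 in the definition of $\tilde{\mc G}$ says precisely that every element preserves $\HH_1(\hat Y; \Q)_B$, so there is a well-defined restriction homomorphism $\tilde{\mc G} \to \Aut_B(\HH_1(\hat Y; \Q)_B)$, whose image is $\mc G$ by the definition recorded in the statement of the corollary.

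Next I would verify that the composition $\Lambda \xrightarrow{\tilde \rho_B} \tilde{\mc G} \to \mc G$ obtained by restriction agrees with $\rho_B|_\Lambda$. This is built into the construction from Section \ref{section:hatY}: for $\varphi \in \Aut(F[W]; R)$, the map $\tilde \nu(\varphi) = \hat f_*$ on $C_1(\hat Y; \Q)$ restricts to the induced action on $\HH_1(\hat Y; \Q) = \HH_1(R; \Q)$, which is $\nu(\varphi)$. Passing to the $B$-isotypic component preserves this restriction, so $\tilde \rho_B(\varphi)\bigl|_{\HH_1(\hat Y;\Q)_B} = \rho_B(\varphi)$. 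Consequently $\rho_B(\Lambda) \subseteq \mc G$, and since $\Lambda$ has finite index in $\Aut(F[W], \mc U; R)$, the corollary follows.

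Because this is purely a formal restriction argument, I do not expect any substantive obstacle beyond invoking Theorem \ref{thm:maintechthmupperbound}. The only sanity check worth performing is that the three hypotheses are inherited without change: each is a condition on $G$ or on $B$, neither of which is disturbed by passing from $C_1(\hat Y;\Q)_B$ to its $\Q[G]$-submodule $\HH_1(\hat Y;\Q)_B$, and no additional finite index reduction is needed beyond the one already supplied by the theorem.
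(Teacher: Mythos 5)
Your proof is correct and is exactly the argument the paper leaves implicit behind its one-line ``Apply Theorem \ref{thm:maintechthmupperbound}'': the restriction map $\tilde{\mc G} \to \mc G$ is well defined by condition 4 in the definition of $\tilde{\mc G}$, the diagram $\rho_B = (\text{restriction}) \circ \tilde\rho_B$ commutes by construction, and the three hypotheses concern only $G$ and $B$ so carry over unchanged.
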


\noindent In proving the lower bound, we need to assume some conditions on $G, B, q,$ and $\preceq$.
\begin{definition}
The conditions $\ddagger$ on $G, B, q, \preceq$ are as follows.
\begin{enumerate}
	\item If $U'(v) \neq \emptyset$, then there exists a maximal $u \in U'(v)$ such that $1_B(q(u) - 1)$ is invertible in $B$.
	\item If $v$ is nonmaximal and $|U'(v)| \geq 2$, then $q(U'(v))$ generates $G$. For $v$ maximal, $q(U'(v))$ generates $G$
	if $|U'(v)| = 2$ and $q(U(v))$ generates $G$ if $|U'(v)| \geq 3$.
	\item No maximal equivalence class is of size $2$.
	If there is a maximal equivalence class of size $3$ or a nonmaximal equivalence class of size $2$,
	then $B$ is not a field or a division algebra.
\end{enumerate}
\end{definition}

\begin{remark} Note that condition 1 does imply that maximal equivalence classes of size $\geq 3$ contain two
	vertices where $1_B(q(u) - 1)$ is invertible. Also, condition 1 implies that $B$ cannot be the trivial factor
	since, in that case, $1_B(q(u)-1) = 0$ for all $u \in W$.
\end{remark}

\begin{theorem} \label{thm:maintechthmlowerbound}
	Let notation and assumptions be as in Theorem \ref{thm:maintechthmupperbound}.  
	Suppose $G, B, q, \preceq$ satisfy $\ddagger$. Then $\tilde{\rho}_B(\Aut(F[W], \mc U; R))$ contains a finite index
	subgroup of $\tilde {\mc G}$.
\end{theorem}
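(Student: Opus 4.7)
The plan is to exhibit sufficiently many elements of $\tilde{\mc G}$ in the image of $\tilde\rho_B$ and then invoke standard generation theorems for elementary and arithmetic subgroups to conclude that the image has finite index. By Lemma \ref{lemma:relautgenTI}, after passing to a finite-index subgroup of $\Aut(F[W], \mc U; R)$ I may assume that the generators are powers of transvections. A direct computation in $\hat Y$ shows that $\tau_{v,w}\colon w\mapsto wv$ acts on $C_1(\hat Y;\Q)_B$ as the identity on all basis vectors except $\hat w_B\mapsto \hat w_B + 1_B q(w)\hat v_B$, so $\tau_{v,w}^k$ with $k$ a multiple of the order of $q(v)$ lies in $\Aut(F[W]; R)$ and realizes the elementary operation with coefficient $1_B q(w)(1 + q(v) + \cdots + q(v)^{k-1}) \in \mcO$.

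To enlarge the set of available coefficients, I will use Nielsen-type commutators. For a chain $u \succeq v \succeq w$ of distinct elements one checks directly that an appropriate power of $[\tau_{u,v}^a, \tau_{v,w}^b]$ lies in $\Aut(F[W]; R)$, fixes all basis vectors except $\hat w_B$, and adds to $\hat w_B$ only a multiple of $\hat u_B$ of the form $1_B q(w)q(v)^b N$, where $N$ is a partial sum of powers of a conjugate of $q(u)$ in $\mcO$; the potential contribution in the $\hat v_B$-direction cancels precisely because the chosen power of the commutator acts trivially mod $R$. Condition 2 of $\ddagger$ then ensures that as $v, a, b$ vary, enough distinct group elements appear that the resulting coefficients span, up to finite index, the $1_B q(w)\mcO$-submodule of $\mcO$; and condition 1 allows differences of such coefficients to be converted, via the invertibility of $1_B(q(u)-1)$, into a unit, giving the full $\mcO$ in each off-diagonal slot $E_{u,w}$ with $u$ strictly above $w$.

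The core difficulty is the diagonal blocks, one for each equivalence class $[v]$ of size $n_v$, which requires generating a finite-index subgroup of the reduced-norm-$1$ group acting on $C_\mcO[U(v)]/C_\mcO[U'(v)] \cong \mcO^{n_v}$. The elementary matrices above, applied to pairs within $[v]$ together with companions from outside, generate the elementary subgroup $E_{n_v}(\mcO)$, and Bass--Vaserstein--Suslin-type theorems give that this has finite index in the reduced-norm-$1$ group once $n_v$ is large enough relative to the stable rank of $\mcO$. Condition 3 of $\ddagger$ is designed for the edge cases: it prohibits maximal classes of size $2$, and when a nonmaximal class has size $2$ or a maximal class has size $3$, it forces $B \cong \Mat_m(D)$ with $m \geq 2$, so that the diagonal group may be rewritten as an arithmetic subgroup of an $\SL_k$ over an order in $D$ with $k = m n_v \geq 3$, restoring elementary generation. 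I expect this to be the most delicate step, both because it requires a careful case analysis of the small equivalence classes and because one must verify that the elementary matrices from different blocks interact to normalize the diagonal pieces.

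Finally, Theorem \ref{thm:maintechthmupperbound} already places $\tilde\rho_B(\Aut(F[W], \mc U; R))$ virtually inside $\tilde{\mc G}$, so what remains is only finite-index generation. Conditions 4 and 5 in the definition of $\tilde{\mc G}$ are automatic for the explicit elementary operations constructed above, since they preserve $\HH_1(\hat Y; \Q)_B$ and act trivially on the $1$-dimensional quotient $C_1(\hat Y; \Q)_B/\HH_1(\hat Y; \Q)_B$. Combining the off-diagonal and diagonal generation by an iteration over the poset of equivalence classes yields a finite-index subgroup of $\tilde{\mc G}$ inside the image of $\tilde\rho_B$.
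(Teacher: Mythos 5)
Your plan --- unipotents plus elementary generation --- matches the paper's three-stage argument, but the unipotent construction has a gap. Your basic element is a power $\tau_{v,w}^k$ of a transvection, acting on $\hat w_B$ by adding $1_B q(w)(1 + q(v) + \cdots + q(v)^{k-1})\hat v_B$. When $k$ is a multiple of the order of $q(v)$, the coefficient $1_B(1 + q(v) + \cdots + q(v)^{k-1})$ is annihilated by $1_B(1-q(v))$; so whenever $b_v = 1_B(q(v)-1)$ is invertible --- precisely what condition 1 of $\ddagger$ arranges for the key vertices --- the coefficient is zero and $\tau_{v,w}^k$ acts trivially on $C_B$. The paper exploits exactly this vanishing in the proof of Theorem \ref{thm:outfnmodtrans} to show that $k$th powers of transvections lie in $\ker\rho_B$, so these powers cannot furnish the unipotents you need. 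Your commutator device does not fill the hole: the assertion that the $\hat v_B$-component of a power of $[\tau_{u,v}^a,\tau_{v,w}^b]$ vanishes ``because the chosen power acts trivially mod $R$'' does not follow (triviality mod $R$ only forces the resulting chain to be a cycle, one linear constraint, not the vanishing of a designated component); and in the natural setup where the cancellation is forced ($a$ a multiple of $\mathrm{ord}(q(u))$), a direct computation shows the surviving $\hat u_B$-coefficient is proportional to $1_B(1+q(u)+\cdots+q(u)^{a-1})$, which again vanishes whenever $b_u$ is invertible.

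The paper circumvents this by never working with powers or commutators of individual transvections. It realizes arbitrary automorphisms $v\mapsto xv$ for $x \in F[U'(v)]\cap R$: Lemmas \ref{lemma:repcyclesbywords} and \ref{lemma:lifttogroupring} show that every element of $H_B[U'(v)]$ with coefficients in $p(\Q[G(U'(v))])$ is, after integer scaling, of the form $1_B\overline{x}$ for such an $x$; condition 2 of $\ddagger$ guarantees $G(U'(v))=G$ except in one maximal case, which is then handled by a Steinberg-type commutator of the \emph{resulting} unipotents $T_{v,z}$ (Claim \ref{claim:gettransvforast}), not of raw transvections. You also misdescribe the diagonal blocks: for a maximal equivalence class $W'$, the image $\pi_{W'}(\tilde{\mc G})$ is not a reduced-norm-one group on $\mcO^{|W'|}$ but an affine group $\SL_{|W'|-1}(\mcO^{\mathrm{op}})\ltimes\mcO^{|W'|-1}$, because elements of $\tilde{\mc G}$ must preserve $H_B[W']$ and act trivially on the one-dimensional quotient; the translation part requires a separate generation argument (Stage 3, Case 1 of the paper).
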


\begin{cor} \label{cor:maintechcorlowerbound}
	Let notation and assumptions be as in Theorem \ref{thm:maintechthmupperbound} and Corollary \ref{cor:maintechcorupperbound}. 
	Suppose $G, B, q, \preceq$ satisfy $\ddagger$. Then $\rho_B(\Aut(F[W], \mc U; R))$ contains a finite index
	subgroup of $\mc G$.
\end{cor}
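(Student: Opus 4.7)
The plan is to deduce this corollary directly from Theorem \ref{thm:maintechthmlowerbound} by restricting to the $\HH_1$--submodule. The main observation is that $\rho_B$ factors through $\tilde{\rho}_B$: every $\varphi \in \Aut(F[W];R)$ acts trivially on $G = F[W]/R$, so the induced homotopy equivalence $\hat f \colon \hat Y \to \hat Y$ is $G$--equivariant and preserves $\HH_1(\hat Y;\Q)$ as a subspace of $C_1(\hat Y;\Q)$ (recall $\hat Y$ has no $2$--cells, so $\HH_1 \subseteq C_1$). Projecting to the $B$--isotypic component, every element of $\tilde{\rho}_B(\Aut(F[W];R))$ preserves $\HH_1(\hat Y;\Q)_B \subseteq C_1(\hat Y;\Q)_B$, and $\rho_B(\varphi)$ is by construction the restriction of $\tilde{\rho}_B(\varphi)$ to this submodule.

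Next I would invoke Theorem \ref{thm:maintechthmlowerbound}: under the standing hypotheses (including $\ddagger$), there is a finite-index subgroup $\tilde H \leq \tilde{\mc G}$ contained in $\tilde{\rho}_B(\Aut(F[W], \mc U; R))$. Let $r \colon \tilde{\mc G} \to \mc G$ denote the restriction map to $\HH_1(\hat Y;\Q)_B$. By the very definition of $\mc G$ in Section \ref{section:linalggrp}, the map $r$ is surjective. Since surjective homomorphisms send finite-index subgroups to finite-index subgroups, $r(\tilde H)$ is a finite-index subgroup of $\mc G$.

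Finally, since $r \circ \tilde{\rho}_B = \rho_B$ on $\Aut(F[W], \mc U; R)$, we obtain
\[
\rho_B(\Aut(F[W], \mc U; R)) \;=\; r\bigl(\tilde{\rho}_B(\Aut(F[W], \mc U; R))\bigr) \;\supseteq\; r(\tilde H),
\]
which is a finite-index subgroup of $\mc G$, as required. There is no real obstacle in the argument for the corollary itself; all the difficulty is absorbed into Theorem \ref{thm:maintechthmlowerbound}. The one point worth double-checking is that the ``finite-index subgroup of $\tilde{\mc G}$'' produced by the theorem really is a subgroup of $\tilde{\mc G}$ (not merely of $\Aut_B(C_1(\hat Y;\Q)_B)$), so that $r$ can be applied to it; this is exactly how the lower-bound theorem is phrased.
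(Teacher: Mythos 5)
Your proposal is correct and takes exactly the approach the paper intends: the paper's entire proof of this corollary is the single line ``Apply Theorem~\ref{thm:maintechthmlowerbound},'' and your argument simply spells out why that application works, namely that $\rho_B$ is the restriction of $\tilde{\rho}_B$ to the (always-preserved) submodule $\HH_1(\hat Y;\Q)_B$, that the restriction map $\tilde{\mc G}\to\mc G$ is surjective by definition, and that surjections carry finite-index subgroups to finite-index subgroups.
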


Using the above corollaries and a short argument, we can produce virtual arithmetic quotients
of the relative outer automorphism group.

\begin{cor} \label{cor:reloutquots}
	Let notation and assumptions be as in Theorem \ref{thm:maintechthmupperbound} and Corollary \ref{cor:maintechcorupperbound}. 
	Suppose $G, B, q, \preceq$ satisfy $\ddagger$. Then there is a virtual surjective representation
	$\Out(F[W], \mc U) \to \mc G$ induced by $\rho_B$.
\end{cor}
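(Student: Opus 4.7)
The plan is to combine Corollaries \ref{cor:maintechcorupperbound} and \ref{cor:maintechcorlowerbound}, which give a virtual surjection $\rho_B\colon \Aut(F[W], \mc U; R) \to \mc G$, with an analysis of how $\rho_B$ sees inner automorphisms, in order to descend $\rho_B$ to a finite index subgroup of $\Out(F[W], \mc U)$. Since $\Aut(F[W], \mc U; R)$ is finite index in $\Aut(F[W], \mc U)$, its image in $\Out(F[W])$ is a finite index subgroup of $\Out(F[W], \mc U) = \Out^0(F[W], \mc U)$ by Lemma \ref{lemma:relautgenTI}, so the only obstruction to descending $\rho_B$ is that inner automorphisms need not lie in $\ker \rho_B$.

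The key step is to pin down the action of $\rho_B$ on inner automorphisms. An inner automorphism $C_x$ lies in $\Aut(F[W]; R)$ precisely when $q(x) \in Z(G)$. On $R/[R,R] = \HH_1(R)$, the conjugation action of $F[W]$ on $R$ factors through $G$, because $r r' r^{-1} \equiv r' \pmod{[R, R]}$ for $r, r' \in R$, so $C_x$ acts by the central element $q(x) \in Z(\Q[G])$. Restricted to the $B$-isotypic component, $C_x$ thus acts as multiplication by the scalar $1_B \cdot q(x) \in Z(B)^{\times}$. As $x$ ranges over $q^{-1}(Z(G))$, these scalars form a finite subgroup $Z_0$ of $Z(B)^{\times}$ (the image of the finite group $Z(G)$ under $\Q[G] \to B$), so $\rho_B\bigl(\Aut(F[W], \mc U; R) \cap \Inn(F[W])\bigr) \subseteq Z_0$ is finite.

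Finally, because $\mc G$ embeds in some $\GL_N(\Z)$ via the canonical identification of $\mcO$-module automorphisms with integer matrices, it is virtually torsion-free (e.g.\ by intersecting with a principal congruence subgroup of odd level), so we can choose a torsion-free finite index subgroup $\mc G_0 \leq \mc G$, which necessarily has $\mc G_0 \cap Z_0 = \{1\}$. Setting $H_0 = \rho_B^{-1}(\mc G_0) \cap \Aut(F[W], \mc U; R)$, still finite index in $\Aut(F[W], \mc U)$, every inner automorphism in $H_0$ now maps to $\mc G_0 \cap Z_0 = \{1\}$. Hence $\rho_B|_{H_0}$ factors through the image of $H_0$ in $\Out(F[W], \mc U)$, which is a finite index subgroup, and the resulting representation has image of finite index in $\mc G_0$, hence in $\mc G$. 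The main point to verify carefully is the scalar description of $\rho_B$ on inner automorphisms; the rest is standard bookkeeping with finite index subgroups of arithmetic groups.
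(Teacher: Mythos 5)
Your proof is correct and follows the same essential strategy as the paper: note that $\rho_B$ sends inner automorphisms into a torsion subgroup of the target, pick a torsion-free finite index subgroup (you use congruence subgroups, the paper invokes Selberg's Lemma on $\Aut(\HH_1(R))\cong\GL_t(\Z)$), and pull back. The one difference is cosmetic but pleasant: where the paper merely observes that $\rho_B(I)$ is torsion because $\Inn(R)\le\Inn(F[W])$ has finite index and acts trivially on $R/[R,R]$, you compute $\rho_B(C_x)$ explicitly as the scalar $1_B\,q(x)\in Z(B)^\times$ (valid since $C_x\in\Aut(F[W];R)$ forces $q(x)\in Z(G)$), identifying the image of $\Inn$ as the finite group $1_B\,Z(G)$. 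One small point to tighten: you should first pass to a finite index subgroup $\Delta_1<\Aut(F[W],\mc U;R)$ with $\rho_B(\Delta_1)\subseteq\mc G$ (as provided by Corollary \ref{cor:maintechcorupperbound}) before forming $\rho_B^{-1}(\mc G_0)$, since $\rho_B$ does not a priori carry all of $\Aut(F[W],\mc U;R)$ into $\mc G$; with that adjustment the finite-index bookkeeping goes through as you describe.
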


\begin{remark}
	The conditions $\dagger$ imposed on $W$ in the introduction may appear to have little effect on the above theorems. Indeed,
	most conditions from $\dagger$ have no impact here except to allow us to reduce to $\Aut(F[W], \mc U)$. 
\end{remark}

\noindent Corollary \ref{cor:reloutquots} and Lemma \ref{lemma:AutAGamtoAutFn} imply the following.

\begin{cor} \label{cor:generaloutagamquots}
	Let $\Gamma$ be a finite graph, $W \subseteq V(\Gamma)$ a subset satisfying $\dagger$, and $\preceq$ the restriction
	of the domination relation to $W$. Let $G = F[W]/R$ be finite, $B$ a nontrivial simple factor of $\Q[G]$, and
	$\mcO$ the image of $\Z[G]$ in $\Q[G]$. Let $\tilde{\mc G} < \tilde{\mc G}(B)$ be the subgroup
	preserving the $\mcO$-span of $\{\hat v_B \mid v \in W\}$ and $\mc G$ the restriction of $\tilde{\mc G}$
	to $\HH_1(\hat Y; \Q)_B$. Suppose $G, B, q, \preceq$ satisfy $\ddagger$.
	Then there is a virtual surjective representation $\Out(A_\Gamma) \to \mc G$.
\end{cor}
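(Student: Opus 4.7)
The plan is to chain together the reduction of Section \ref{section:reduction} with the relative outer automorphism statement Corollary \ref{cor:reloutquots}, obtaining a composite virtual representation
$$\Out(A_\Gamma) \;\rightsquigarrow\; \Out(F[W], \mc U) \;\rightsquigarrow\; \mc G.$$
Since Corollary \ref{cor:reloutquots} already does all of the serious work, the task is really just to record how the relative group sits inside $\Out(A_\Gamma)$ up to finite index.

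First, I would pass to the finite-index subgroup $\Aut^0(A_\Gamma) \leq \Aut(A_\Gamma)$ supplied by Lemma \ref{lemma:onlytandpc}. Because $W$ is closed under lower bounds (condition 5 of $\dagger$), Lemma \ref{lemma:Gam-Pischar} makes $\langle\langle \Gamma - W \rangle\rangle$ invariant under $\Aut^0(A_\Gamma)$, so the quotient $A_\Gamma \twoheadrightarrow A_\Gamma/\langle\langle \Gamma - W \rangle\rangle = F[W]$ induces a homomorphism $\Aut^0(A_\Gamma) \to \Aut(F[W])$. Lemma \ref{lemma:AutAGamtoAutFn} (where the remaining conditions of $\dagger$ enter) identifies the image as $\Aut(F[W], \mc U) \cdot \Inn(F[W])$. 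Projecting further to $\Out(F[W])$ kills the $\Inn(F[W])$ factor, and Lemma \ref{lemma:relautgenTI} identifies the resulting image as $\Out(F[W], \mc U)$. Since $\Inn(A_\Gamma)$ lies in the kernel of the composition, it descends to a surjection from the finite-index subgroup $\Out^0(A_\Gamma) \leq \Out(A_\Gamma)$ (the image of $\Aut^0(A_\Gamma)$) onto $\Out(F[W], \mc U)$.

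Second, under the hypotheses $\ddagger$, Corollary \ref{cor:reloutquots} supplies a virtual surjective representation $\Out(F[W], \mc U) \to \mc G$, i.e.\ a homomorphism defined on a finite-index subgroup $H \leq \Out(F[W], \mc U)$ whose image has finite index in $\mc G$. Pulling $H$ back along the surjection $\Out^0(A_\Gamma) \twoheadrightarrow \Out(F[W], \mc U)$ yields a finite-index subgroup of $\Out^0(A_\Gamma)$, hence of $\Out(A_\Gamma)$, on which the composite homomorphism to $\mc G$ has image equal to the image of $H$, which has finite index in $\mc G$.

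No step is a serious obstacle, since the corollary is a formal composition of previously established results. The only verification needed is the elementary observation that the preimage of a finite-index subgroup under a surjective homomorphism is itself of finite index, together with transitivity of finite index, in order to conclude that we have genuinely produced a virtual surjective representation from $\Out(A_\Gamma)$ itself.
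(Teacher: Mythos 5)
Your proposal is correct and follows the same route the paper intends: the paper's own justification for Corollary~\ref{cor:generaloutagamquots} is simply the one-line remark that it follows from Corollary~\ref{cor:reloutquots} and Lemma~\ref{lemma:AutAGamtoAutFn}, and your write-up just fills in that chain explicitly (passing to $\Aut^0(A_\Gamma)$, descending through $F[W]$ via Lemma~\ref{lemma:Gam-Pischar}, identifying the image via Lemma~\ref{lemma:AutAGamtoAutFn}, killing inner automorphisms to land in $\Out(F[W], \mc U)$, and composing with the virtual surjection of Corollary~\ref{cor:reloutquots}).
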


\section{Upper bound on the image of $\tilde{\rho}$} \label{section:upperbound}
Our goal is to show that $\tilde{\rho}_B(\Aut(F[W], \mc U; R))$ is virtually contained in $\tilde{\mc G}$.
Throughout this section, we continue to use the notation of Section \ref{section:definingrho}
and Theorem \ref{thm:maintechthmupperbound}. Unless otherwise indicated,
this is what the notation in the lemmas of this section refer to.
Moreover, we proceed to simplify some notation as follows. We let 
\begin{itemize}
\item $F = F[W]$
\item $\rho = \rho_B$
\item $\tilde{\rho} = \tilde{\rho}_B$
\item $C = C_1(\hat Y; \Q)$ (cellular $1$-chains) and $C_B = C_1(\hat Y; \Q)_B$
\item $H = H_1(\hat Y; \Q)$ and $H_B = H_1(\hat Y; \Q)_B$
\end{itemize}

We first show that the image virtually satisfies conditions 1, 3, 4, 5 of the definition of $\tilde{\mc G}$,
and so we let $\tilde{\mc G}'(B) < \tilde{\mc G}(B)$ be the subgroup satisfying only those conditions.
We let $\tilde{\mc G'} < \tilde{\mc G}'(B)$ be the subgroup preserving the $\mcO$-span of the $\hat v_B$.
In the definition of $\tilde{\mc G}(B)$, various subgroups of $C_B$ were required to be preserved, and
the actions on certain quotients were required to be trivial. We will in fact prove that these 
properties all analogously hold for the action of $\tilde{\nu}(\Aut(F[W], \mc U; R))$ on the entire chain group $C$. 

\begin{lemma} \label{lemma:actiononwords}
	Let $\varphi \in \Aut(F[W], \mc U; R)$. Then,
	\begin{itemize}
		\item $\varphi(v) \in F[U(v)]$ for all $v \in W$
		\item For all $v \in W$ that have a trivial equivalence class, there are words
		$x_1, x_2 \in F[U'(v)]$ such that $\varphi(v) = x_1 v^{\pm 1} x_2$.
	\end{itemize}
\end{lemma}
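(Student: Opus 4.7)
The plan is to reduce both claims to the defining invariance properties of $\Aut(F[W], \mc U)$ together with the generating set provided by Lemma \ref{lemma:relautgenTI}. Since $\varphi$ lies in $\Aut(F[W], \mc U; R) \subseteq \Aut(F[W], \mc U)$, it preserves every free factor $F[U]$ with $U \subseteq W$ closed under upper bounds under $\preceq$.

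The first bullet is immediate: $U(v)$ is closed under upper bounds by transitivity of $\preceq$, so $F[U(v)] \in \mc U$, and since $v \in U(v)$ we get $\varphi(v) \in \varphi(F[U(v)]) = F[U(v)]$. For the second bullet, the added hypothesis that $v$ has trivial equivalence class lets me show $U'(v) = \{w \in W : w \succ v\}$ is also closed under upper bounds: if $w \succ v$ and $w' \succeq w$, then $w' \succ v$, for otherwise $v \succeq w' \succeq w$ would force $w \sim v$, contradicting triviality of $[v]$. Hence $F[U'(v)] \in \mc U$ and is preserved by $\varphi$ as well.

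With $F[U'(v)]$ invariant, I plan to prove the sandwich form by induction on the length of $\varphi$ as a word in the transvection/inversion generators of $\Aut(F[W], \mc U)$ guaranteed by Lemma \ref{lemma:relautgenTI}. For each such generator I verify the claim directly: an inversion either fixes $v$ or sends $v \mapsto v^{-1}$; a transvection $w \mapsto wu$ or $w \mapsto uw$ with $u \succeq w$ and $u \neq w$ fixes $v$ unless $w = v$, in which case triviality of $[v]$ forces $u \in U'(v)$ and $v$ maps to $uv$ or $vu$. All of these have the form $x_1 v^{\pm 1} x_2$ with $x_1, x_2 \in F[U'(v)]$. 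For the inductive step, if a generator $\varphi_1$ satisfies $\varphi_1(v) = x_1 v^{\epsilon} x_2$ and a shorter word $\psi$ satisfies $\psi(v) = y_1 v^{\eta} y_2$ (all factors in $F[U'(v)]$), then
\[ (\varphi_1 \circ \psi)(v) \;=\; \varphi_1(y_1)\,(x_1 v^{\epsilon} x_2)^{\eta}\,\varphi_1(y_2), \]
and since $\varphi_1$ preserves $F[U'(v)]$, the factors $\varphi_1(y_i)$ again lie in $F[U'(v)]$; absorbing everything outside the central $v^{\pm 1}$, the expression retains the required form.

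The only subtle point I anticipate is ensuring that exactly one syllable $v^{\pm 1}$ persists under composition rather than the weaker statement that the exponent sum of $v$ in $\varphi(v)$ is $\pm 1$; this is handled precisely by the fact that $F[U'(v)]$ is an invariant subgroup, so conjugating or multiplying the lone $v^{\epsilon}$ by images of elements of $F[U'(v)]$ cannot introduce new $v$-syllables. No extra work is required to pass from the full group $\Aut(F[W], \mc U)$ to its subgroup $\Aut(F[W], \mc U; R)$.
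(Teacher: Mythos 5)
Your proof is correct and follows essentially the same route as the paper's: establish the first bullet from the invariance of $F[U(v)]$, observe that triviality of $[v]$ makes $U'(v)$ closed under upper bounds so that $F[U'(v)]$ is also invariant, check the sandwich form on the transvection/inversion generators from Lemma \ref{lemma:relautgenTI}, and close up under composition. The paper leaves the composition step implicit while you spell out the induction and the $\eta = \pm 1$ bookkeeping explicitly; the extra detail is harmless and the "subtle point" you flag is exactly the role the invariance of $F[U'(v)]$ plays in the paper's argument.
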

\begin{proof}
	The first statement is clear by definition of $\Aut(F[W], \mc U; R)$.
	If $v$ has a trivial equivalence class, then $F[U'(v)]$ is invariant
	under $\Aut(F[W], \mc U; R)$. The second statement then follows from the fact that, for all
	generators (or their inverses) in $\varphi \in \Aut(F[W], \mc U; R)$, there exists $u \in U'(v)$
	such that $\varphi(v) \in \{vu^{\pm 1}, u^{\pm 1}v, v^{\pm 1}\}$.
\end{proof}

\subsection{Acting on $1$-chains}
One helpful tool in describing the action on $1$-chains is the following
lemma relating words in $F$ and their $1$-chains in the cover $\hat Y$.
For any word $x \in F$, there is a corresponding based loop in $Y$ which lifts to a unique path in 
$\hat Y$ starting at $\hat y \in \hat Y$. Recall that $\overline{x}$ denotes the corresponding element of 
$C$.

\begin{lemma} \label{lemma:foxcalc}
	Suppose $x_1, x_2 \in F[W]$. Let $g_1 = q(x_1)$.
	Then, $\overline{x_1 x_2} = \overline{x_1} + g_1 \cdot \overline{x_2}$.
\end{lemma}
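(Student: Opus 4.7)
The plan is to use the standard path-lifting dictionary for the regular cover $\hat Y \to Y$ recalled in Section \ref{section:hatY}. Recall that $\overline{x}$ is defined by taking the loop in $Y$ corresponding to the word $x$, lifting it to the unique path in $\hat Y$ starting at $\hat y$, and recording the signed sum of edges traversed. A letter-and-inverse cancellation in $x$ lifts to an edge immediately followed by its reverse, contributing $0$, so $\overline{x}$ is well-defined on elements of $F$ and is unaffected by any unreducedness of the word representative we choose.

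By uniqueness of path lifting, the lift of $x_1 x_2$ starting at $\hat y$ decomposes as the concatenation of the lift of $x_1$ (starting at $\hat y$) with the lift of $x_2$ starting at the terminal point of the first lift. Regularity of the cover identifies the $0$-skeleton of $\hat Y$ with the orbit $G \cdot \hat y$, and the monodromy of the cover is exactly the quotient map $q$, so the lift of $x_1$ ends at $q(x_1) \cdot \hat y = g_1 \cdot \hat y$. Uniqueness of path lifting together with $G$-equivariance of the deck action then forces the lift of $x_2$ starting at $g_1 \cdot \hat y$ to be $g_1$ applied to the lift of $x_2$ starting at $\hat y$. Passing to cellular $1$-chains, concatenation of paths becomes addition in $C_1(\hat Y; \Q)$, and the contributions of the two portions are $\overline{x_1}$ and $g_1 \cdot \overline{x_2}$ respectively, yielding the claimed identity.

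No essential obstacle arises: the lemma is a formal consequence of the theory of regular covers. If one wishes to avoid even the brief appeal to path lifting, one can instead induct on the length of a word representative of $x_2$. The base case $x_2 \in \{v^{\pm 1} : v \in W\}$ follows directly from the definition of $\hat v$ as the outgoing lift of $v$ at $\hat y$ together with the fact that $G$ acts on $\hat Y$ by deck transformations; the inductive step uses the multiplicativity of $q$ to combine $\overline{x_1(y_1 y_2')} = \overline{(x_1 y_1) y_2'}$ via two applications of the inductive hypothesis and observe cancellation of the $g_1 q(y_1)$ factors.
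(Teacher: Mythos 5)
Your proof is correct and takes essentially the same approach as the paper: both rely on uniqueness of path lifting to decompose the lift of $x_1 x_2$ into the lift of $x_1$ followed by the $g_1$-translate of the lift of $x_2$, then pass to $1$-chains. The extra remarks you include (well-definedness under word reduction, the alternative inductive proof) are fine but not needed.
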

\begin{proof}
	Following the discussion before the lemma, let $\gamma_1, \gamma_2$ be the paths in $\hat Y$
	starting at $\hat y$ and corresponding to $x_1, x_2$ respectively. Let $\gamma_{12}$
	be the path in $\hat Y$ corresponding to $x_1 x_2$. Since the terminal endpoint of $\gamma_1$
	is $g_1 \cdot y$, uniqueness of path lifting tells us that the second half
	of the path $\gamma_{12}$ is simply $\gamma_2$ translated by $g_1$. The lemma follows.
\end{proof}

For a subset $U \subseteq W$, we now define $H[U] = C[U] \cap H$ and $H_B[U] = C_B[U] \cap H_B$;
i.e. these are the homology classes supported on $U$.

\begin{lemma} \label{lemma:finallyniceform}
	There is a finite index subgroup $\Delta < \Aut(F[W], \mc U; R)$ such that for all $\varphi \in \Delta$,
	the following hold.
	\begin{itemize}
		\item For all $v \in W$, we have $\tilde{\nu}(\varphi)(\hat v) = \hat v + z$ for some $z \in H[U(v)]$.
		\item If $v \in W$ has a trivial equivalence class, then 
		$\tilde{\nu}(\varphi)(\hat v) = \hat v + z $ for some $z \in H[U'(v)]$.
	\end{itemize}
\end{lemma}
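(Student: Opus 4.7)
The plan is to handle the two bullets separately; the first should hold for every $\varphi \in \Aut(F[W], \mc U; R)$, while only the second requires passing to a finite index subgroup $\Delta$.

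For the first bullet, Lemma \ref{lemma:actiononwords} gives $\varphi(v) \in F[U(v)]$, so the lift at $\hat y$ of the corresponding loop in $Y$ stays in the preimage of the $U(v)$-subgraph and yields $\overline{\varphi(v)} \in C[U(v)]$. Since $q \circ \varphi = q$, this path ends at $q(v) \cdot \hat y$, the same endpoint as $\hat v$, so $\overline{\varphi(v)} - \hat v$ is a cycle in $C[U(v)]$, i.e., lies in $H[U(v)]$. As a byproduct, the same reasoning applied to $u \in U'(v)$ (when $v$ has a trivial equivalence class, so that $U(u) \subseteq U'(v)$) shows $\tilde\nu(\varphi)$ also preserves $C[U'(v)]$.

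For the second bullet, fix $v$ in a trivial equivalence class. By the preceding, $\tilde\nu$ induces an action on the quotient $C[U(v)]/C[U'(v)]$, which is a free left $\Z[G]$-module of rank $1$ generated by (the image of) $\hat v$; any $\Z[G]$-linear automorphism of this module is multiplication by a unit in $\Z[G]$. Writing $\varphi(v) = x_1 v^\epsilon x_2$ as in Lemma \ref{lemma:actiononwords} and unpacking with Lemma \ref{lemma:foxcalc}, this unit equals $q(x_1)$ when $\epsilon = +1$ and $-q(x_1)q(v)^{-1}$ when $\epsilon = -1$, either way an element of the finite subgroup $\pm G \subset \Z[G]^\times$. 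The resulting map $\mu_v \colon \Aut(F[W], \mc U; R) \to \Z[G]^\times$ is therefore an anti-homomorphism into a finite subgroup, so its kernel $\Delta_v$ has finite index. Setting $\Delta$ to be the intersection of the $\Delta_v$ over the finitely many trivial equivalence class vertices, for $\varphi \in \Delta$ the trivial action on each $C[U(v)]/C[U'(v)]$ combines with the first bullet to give $\overline{\varphi(v)} - \hat v \in C[U'(v)] \cap H = H[U'(v)]$.

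The main technical point is verifying that $\mu_v$ is indeed an anti-homomorphism: composing two automorphisms $\varphi_i(v) = x_1^{(i)} v^{\epsilon_i} x_2^{(i)}$ produces a normal form whose leading word depends on $\epsilon_2$, and matching the multipliers on the nose requires invoking $q \circ \varphi_1 = q$ to rewrite expressions like $q(v)^{\pm 1} q(x_j^{(1)}) q(v)^{\mp 1}$ as appropriate conjugates in $G$. Once this compatibility check is complete, the finiteness of $\pm G$ finishes the argument.
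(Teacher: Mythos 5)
Your proof is correct and follows essentially the same route as the paper: first bullet for all $\varphi$ via Lemma \ref{lemma:actiononwords} and the cycle observation, then pass to a finite-index subgroup for the second bullet by studying the induced action on $C[U(v)]/C[U'(v)]$. Your unit computation ($q(x_1)$ or $-q(x_1)q(v)^{-1}$) matches the paper's $\pm g$.

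The one place where you create unnecessary work for yourself is the final paragraph, where you declare the anti-homomorphism property of $\mu_v$ to be ``the main technical point'' requiring a normal-form computation. No such verification is needed. The map that actually matters is the representation $\Aut(F[W], \mc U; R) \to \Aut_{\Q[G]}\bigl(C[U(v)]/C[U'(v)]\bigr)$, which is a genuine group homomorphism by construction (it is a restriction of $\tilde\nu$ followed by passage to a $\tilde\nu$-invariant quotient). Your computation shows the image of this homomorphism lands inside the finite set of automorphisms ``right multiplication by $\pm g$'' for $g \in G$; a homomorphism whose image lies in a finite set has finite image, so its kernel has finite index, and you are done. If you insist on the description via $\mu_v$, the anti-homomorphism property also follows formally with no normal-form manipulation: $\Aut_{\Z[G]}(\Z[G])$ is anti-isomorphic to $\Z[G]^\times$ for a rank-one free left module, and $\mu_v$ is the composition of this anti-isomorphism with the homomorphism above. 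Either way, the ``compatibility check'' you flag as the crux of the argument is a formality, and the paper correctly omits it.
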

\begin{proof}
	We begin with the first statement which holds for all $\varphi \in \Aut(F[W], \mc U; R)$.
	It follows from Lemma \ref{lemma:actiononwords} that $\varphi(v) \in F[U(v)]$, and consequently,
	by Lemma \ref{lemma:foxcalc}, $\tilde{\nu}(\varphi)(\hat v) \in C[U(v)]$.
	Moreover, since $\varphi \in \Aut(F[W], \mc U; R)$, we know that $r = v^{-1}\varphi(v) \in R$,
	and thus $$\tilde{\nu}(\varphi)(\hat v) = \overline{vr} = \hat v + q(v) \cdot \overline{r}.$$
	Note that $q(v) \overline{r} \in H[U(v)]$.
	
	To prove the second statement, we first prove a weaker claim for the entire group $\Aut(F[W], \mc U; R)$.
	Suppose $v \in W$ has a trivial equivalence class. 
	We claim that for all $\varphi \in \Aut(F[W], \mc U; R)$ there is some $g \in G$
	such that $\tilde{\nu}(\varphi)(\hat v) = \pm g \cdot \hat v + z$ and $z \in C[U'(v)]$. 
	By Lemma \ref{lemma:actiononwords}, 
	$\varphi(v) = x_1 v^{\pm 1} x_2$ for $x_i \in F[U'(v)]$.
	Let $g_1 = q(x_1)$ and $g_v = q(v)$. Note that $\overline{v^{-1}} = - g_v^{-1} \hat v$. Then, by Lemma \ref{lemma:foxcalc},
	$\tilde{\nu}(\varphi)(\hat v)$ is one of the following depending on the exponent of $v$.
	$$ \overline{x_1 v x_2}  = \overline{x_1} + g_1 \cdot \hat v + g_1 g_v \cdot \overline{x_2} $$
	$$ \overline{x_1 v^{-1} x_2}  = \overline{x_1} - g_1 g_v^{-1} \cdot \hat v + g_1 g_v^{-1} \cdot \overline{x_2} $$
	Clearly, $\overline{x_1} \in C[U'(v)]$ and $g_1 g_v^{\pm 1} \cdot \overline{x_2} \in C[U'(v)]$.
	The claim follows.
	
	Note that since $v$ has a trivial equivalence class, the set $U'(v)$ is a union
	of $U(u_1), \dots, U(u_t)$ for some $u_1, \dots, u_t \in U'(v)$. Consequently,
	$C[U'(v)] = \sum_i C[U(u_i)]$ is invariant under $\tilde{\nu}(\varphi)$ by the first
	part of the proof. Similarly $U(v)$ is invariant under $\tilde{\nu}(\varphi)$.
	Thus, there is a well-defined representation 
	$\Aut(F[W], \mc U; R) \to \Aut(C[U(v)]/C[U'(v)])$ by restricting and passing to the quotient.
	Moreover, by the previous paragraph, the representation has finite image. Thus, there is a finite index subgroup
	of $\Aut(F[W], \mc U; R)$ where, for this particular $v$, all its automorphisms $\varphi$ act by 
	$\tilde{\nu}(\varphi)(\hat v) = \hat v + z $ where $z \in C[U'(v)]$. Moreover, arguing as in the previous
	case, we must have $z \in C[U'(v)] \cap H$. By intersecting such subgroups
	over all such $v$, we obtain the desired $\Delta < \Aut(F[W], \mc U; R)$.
\end{proof}

\begin{cor} \label{cor:upperboundallofC}
	There is a finite index subgroup $\Delta < \Aut(F[W], \mc U; R)$ such that every automorphism of $\tilde{\nu}(\Delta)$
	\begin{itemize}
		\item preserves $C[U(v)]$ for all $v \in W$
		\item acts trivially on $C[U(v)]/C[U'(v)]$ for all $v$ with trivial equivalence classes
		\item preserves $H < C$
		\item and acts trivially on $C/H$
	\end{itemize}
\end{cor}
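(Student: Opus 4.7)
The plan is to take $\Delta$ to be the finite index subgroup supplied by Lemma \ref{lemma:finallyniceform} and to promote the per-generator statements of that lemma to statements about the $\Q[G]$-submodules they span. The key observation is that every $\tilde\nu(\varphi)$ is a $\Q[G]$-module automorphism of $C$ by construction (cf. Section \ref{section:hatY}), and each of the subspaces listed in the corollary is a $\Q[G]$-submodule generated by a subset of $\{\hat v \mid v \in W\}$ or its image. Thus once the correct behavior of $\tilde\nu(\varphi)$ is known on these generators, $\Q[G]$-equivariance automatically spreads it to the full submodule or quotient.

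For the first bullet, if $u \in U(v)$ then $U(u) \subseteq U(v)$ by transitivity of $\preceq$, so Lemma \ref{lemma:finallyniceform} gives $\tilde\nu(\varphi)(\hat u) - \hat u \in H[U(u)] \subseteq C[U(v)]$, and $\Q[G]$-equivariance propagates the inclusion $\tilde\nu(\varphi)(C[U(v)]) \subseteq C[U(v)]$. For the second bullet, when $v$ has a trivial equivalence class we have $U(v) \setminus U'(v) = \{v\}$, so every $\hat u$ with $u \in U'(v)$ dies in $C[U(v)]/C[U'(v)]$ and the quotient is a cyclic $\Q[G]$-module generated by the image of $\hat v$; Lemma \ref{lemma:finallyniceform} says $\tilde\nu(\varphi)(\hat v) \equiv \hat v \pmod{C[U'(v)]}$, which together with $\Q[G]$-equivariance gives triviality on the whole quotient.

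The third bullet is automatic: $\tilde\nu(\varphi)$ arises from a self-homotopy-equivalence $\hat f\colon \hat Y \to \hat Y$, so it restricts to the induced action on $H_1(\hat Y;\Q) = H$. For the fourth bullet, Lemma \ref{lemma:finallyniceform} forces $\tilde\nu(\varphi)(\hat v) - \hat v \in H$ for every $v \in W$, so each of the $\Q[G]$-module generators of $C$ is fixed modulo $H$; once again $\Q[G]$-equivariance extends this to a trivial induced map on all of $C/H$. There is no genuine obstacle here, since the real content has already been carried out in Lemma \ref{lemma:finallyniceform}; the corollary is simply a packaging step that bundles the four single-generator statements into the submodule-level form needed in later sections.
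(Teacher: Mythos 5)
Your proof is correct and takes essentially the same approach as the paper, which disposes of this corollary in one line (``This follows easily for $\Delta$ as in Lemma \ref{lemma:finallyniceform}''). You supply the details: each subspace in the statement is the $\Q[G]$-span of a subset of $\{\hat v \mid v \in W\}$, and since $\tilde\nu(\varphi)$ is $\Q[G]$-linear, the per-generator conclusions of Lemma \ref{lemma:finallyniceform} propagate to the corresponding submodules and quotients. Two small points worth tightening: the identity $U(v)\setminus U'(v)=\{v\}$ holds by definition of $U'(v)$ regardless of whether $v$ has a trivial equivalence class, so you should not phrase it as a consequence of that hypothesis; and for the second bullet you should say explicitly that $C[U'(v)]$ is invariant (it is, because $U'(v)$ is closed under upper bounds when $[v]=\{v\}$, hence $C[U'(v)]=\sum_{u\in U'(v)}C[U(u)]$ is preserved by the first bullet) so that the induced action on $C[U(v)]/C[U'(v)]$ is well defined before you argue it is trivial.
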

\begin{proof}
	This follows easily for $\Delta$ as in Lemma \ref{lemma:finallyniceform}.	
\end{proof}

\begin{cor} \label{cor:upperboundworednrm}
	There is a finite index subgroup $\Delta < \Aut(F[W], \mc U; R)$ such that $\tilde{\rho}(\Delta) < \tilde{\mc G}'$
\end{cor}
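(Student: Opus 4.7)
The plan is to take the subgroup $\Delta < \Aut(F[W], \mc U; R)$ supplied by Corollary~\ref{cor:upperboundallofC} and simply transport the four structural properties it gives for the $\tilde{\nu}$-action on $C$ over to the $\tilde{\rho} = \tilde{\rho}_B$-action on $C_B = 1_B \cdot C$ via the idempotent $1_B$, and then verify the additional condition that the $\mcO$-span of $\{\hat v_B\mid v\in W\}$ is preserved.

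First, for any $\varphi \in \Delta$, the automorphism $\tilde{\nu}(\varphi)$ is a $\Q[G]$-module automorphism of $C$, so it commutes with multiplication by the central idempotent $1_B \in \Q[G]$. Consequently it restricts to a $B$-module automorphism of $C_B$ that, by construction, equals $\tilde{\rho}(\varphi)$. Applying $1_B$ to the four bullets in Corollary~\ref{cor:upperboundallofC} then yields directly: $\tilde{\rho}(\varphi)$ preserves $C_B[U(v)] = 1_B \cdot C[U(v)]$ for every $v$; acts trivially on $C_B[U(v)]/C_B[U'(v)] = 1_B \cdot \bigl(C[U(v)]/C[U'(v)]\bigr)$ whenever $[v]$ is trivial; preserves $H_B = 1_B \cdot H$; and acts trivially on $C_B/H_B = 1_B \cdot (C/H)$. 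These are exactly conditions 1, 3, 4, 5 in the definition of $\tilde{\mc G}'(B)$, so $\tilde{\rho}(\Delta) \subseteq \tilde{\mc G}'(B)$.

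It remains to show that each $\tilde{\rho}(\varphi)$ preserves the $\mcO$-span $M := \mcO\langle \hat v_B : v\in W\rangle$, where $\mcO$ is the image of $\Z[G]$ in $B$. The key observation is that the cellular map $\hat f \colon \hat Y \to \hat Y$ lifting $\varphi$ is $G$-equivariant and sends the $0$-skeleton to itself, so $\hat f_*$ restricts to a $\Z[G]$-module automorphism of the integral cellular $1$-chain module $C_1(\hat Y; \Z)$, which is precisely the free $\Z[G]$-module on $\{\hat v : v \in W\}$. Hence $\tilde{\nu}(\varphi)$ preserves $C_1(\hat Y; \Z)$, and applying $1_B$ shows it preserves $M = 1_B \cdot C_1(\hat Y; \Z)$. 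Thus $\tilde{\rho}(\Delta) \subseteq \tilde{\mc G}'$, completing the proof.

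I do not expect a serious obstacle here: everything reduces to the fact that $1_B$ is a central idempotent commuting with the action, plus the (already used) observation that $\tilde{\nu}$ is induced by a cellular lift. The only mild care is to confirm that the $\mcO$-span coincides with $1_B \cdot C_1(\hat Y; \Z)$, which is immediate from the definition $\mcO = 1_B \cdot \Z[G]$ and the fact that $\{\hat v\}$ freely generates $C_1(\hat Y; \Z)$ over $\Z[G]$.
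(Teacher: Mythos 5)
Your proposal is correct and follows essentially the same route as the paper: pick $\Delta$ from Corollary~\ref{cor:upperboundallofC}, observe that projecting by the central idempotent $1_B$ transports the four structural properties to $C_B$ (giving conditions 1, 3, 4, 5 of $\tilde{\mc G}'(B)$), and then note that $\tilde{\nu}(\varphi)$ preserves the integral cellular $1$-chains $C_1(\hat Y;\Z)$ since it comes from a $G$-equivariant cellular lift fixing the $0$-skeleton, so applying $1_B$ gives preservation of the $\mcO$-span. The only difference is that you spell out the idempotent bookkeeping a bit more explicitly than the paper does.
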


\begin{proof}
	It follows immediately that $\tilde{\rho}(\Delta) < \tilde{\mc G}'(B)$ for $\Delta$ as in Corollary \ref{cor:upperboundallofC}.
	Now, suppose $\varphi \in \Delta$. Since $\tilde{\nu}(\varphi)$ is the action of some continuous map
	fixing the $G$ orbit of the base vertex $\hat y \in \hat Y$, we have $\tilde{\nu}(\varphi)$ preserves
	$C_1(\hat Y; \Z) \subset C_1(\hat Y; \Q)$ or equivalently the $\Z[G]$-span of $\{\hat v \mid v \in W\}$. Since 
	we chose $\mcO \subseteq B$ to be the image of $\Z[G]$, we see that $\tilde{\rho}(\varphi)$
	preserves the $\mcO$-span of $\{\hat v_B \mid v \in W\}$.
\end{proof}

\subsection{Reduced norms}
We now show that $\tilde{\rho}(\Aut(F[W], \mc U; R))$ lies in $\tilde{\mc G}$ provided that $G$
has special properties or assuming $\Aut(F_n)$ has property (T) for $n \geq 5$.
We recall Proposition 8.7 from \cite{GL} but translated into our language.

\begin{prop} \label{prop:rednrmifmetabelian}
	Let $n \geq 3$ and $q: F_n \to G$ a surjective homomorphism with finite index kernel $R$.
	Let $B$ be a nontrivial simple factor of $\Q[G]$, and
	let $\rho_B: \Aut(F_n; R) \to \Aut(H_B)$ be the natural action on the $B$-isotypic component 
	of $\HH_1(R)$. If $G$ is a finite metabelian group, then
	there is a finite index subgroup $\Delta < \Aut(F_n; R)$ such that 
	$\rho(\varphi)$ has reduced norm $1$ for all $\varphi \in \Delta$.
\end{prop}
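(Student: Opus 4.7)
The plan is to show that on a finite-index subgroup of $\Aut(F_n;R)$, the reduced norm of $\rho(\varphi)$ equals $1$. Let $K = Z(B)$ and let $\mcO_K$ be its ring of integers. On a finite-index subgroup, $\tilde{\rho}_B$ preserves an integral lattice in $C_1(\hat Y;\Q)_B$ (the $\mcO$-span of the $\hat v_B$, with $\mcO$ the image of $\Z[G]$), so $\nrd \circ \rho$ takes values in $\mcO_K^\times$. By Dirichlet's theorem, $\mcO_K^\times$ is finitely generated with finite torsion subgroup, so it suffices to show that the composed homomorphism $\Aut(F_n;R) \to \mcO_K^\times \otimes_{\Z} \Q$ is trivial on a further finite-index subgroup.

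First I would handle the abelian case $G = G^{ab}$. Any $\varphi \in \Aut(F_n;R)$ satisfies $\varphi(x_i) = x_i r_i$ with $r_i \in R$, and Lemma \ref{lemma:foxcalc} (applied inductively to reduced words representing $r_i$) shows that the matrix of $\tilde{\nu}(\varphi)$ on $C_1(\hat Y;\Q)$ in the basis $\{\hat x_j\}$ has the form $I + N_\varphi$ with $N_\varphi$ an $n\times n$ matrix whose entries lie in the augmentation ideal of $\Q[G]$. Passing to a further finite-index subgroup of $\Aut(F_n;R)$ acting trivially on $R/R_0$ for a suitable characteristic $R_0 \triangleleft F_n$ pushes the $r_i$ into a high enough term of the derived/augmentation filtration that, after projecting to each nontrivial $B$-isotypic, the diagonal entries of the $B$-matrix of $\tilde{\rho}_B(\varphi)$ become $1$. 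Thus $\tilde{\rho}_B(\varphi)$ is unipotent on $C_B$, and so has reduced norm $1$.

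For the metabelian case, set $A = [G,G]$, so $\bar G = G/A$ is abelian. By Clifford theory applied to $G$, every nontrivial irreducible $\Q$-representation of $G$ is monomial, i.e.\ of the form $\mathrm{Ind}_H^G \chi$ for some subgroup $A \subseteq H \subseteq G$ and some character $\chi$ of $H$. This exhibits $C_B$ as a direct summand of an induced module and, by choosing an ordered basis adapted to the $A$-isotypic filtration together with coset representatives of $H$ in $G$, it makes $\tilde{\rho}_B(\varphi)$ (for $\varphi$ in a suitable finite-index subgroup of $\Aut(F_n;R)$) block upper-triangular with diagonal blocks governed by the action on the $\bar G$-cover, where the abelian case applies. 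Hence $\nrd(\tilde{\rho}_B(\varphi))$ factors through a product of reduced norms on diagonal blocks, each of which is $1$. The main obstacle is matching $\nrd$ on $B$ with the norm along the induction $\mathrm{Ind}_H^G$: this requires carefully tracking the Morita equivalence between $B$ and the endomorphism algebra of the induced representation and verifying that strict triangularity produced by Fox calculus is preserved under the change of basis — this is essentially the technical content of Grunewald and Lubotzky's argument in \cite{GL}, which I would reproduce here.
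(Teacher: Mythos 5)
The paper does not itself prove this proposition; it is cited verbatim as Proposition~8.7 of Grunewald--Lubotzky \cite{GL} (``translated into our language''). There is therefore no internal proof to compare against, but your sketch has a gap that would not survive being written out in full.

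Your abelian case rests on the claim that on a finite-index subgroup of $\Aut(F_n;R)$ the matrix of $\tilde\rho_B(\varphi)$ on $C_B$ is unipotent. That is false: by this paper's own lower-bound results (Theorem~\ref{thm:maintechthmlowerbound}, Corollary~\ref{cor:maintechcorlowerbound}, Remark~\ref{remark:mfGistildeG}) and by \cite{GL}, the image $\rho_B(\Aut(F_n;R))$ virtually contains $\SL_{n-1}(\mcO^{op})$, which has no finite-index unipotent subgroup, so no passage to a congruence subgroup can force unipotence. The mechanism you propose cannot salvage it: for a finite group $G$, $\Q[G]$ is semisimple, so the powers of the augmentation ideal stabilize immediately and there is no ``deeper term'' to push $r_i$ into; and it is not even true that the Fox derivatives of an arbitrary $r\in R$ project into the augmentation ideal of $\Q[G]$ --- for $r=x^m$ with $q(x)$ of order $m$, $\partial r/\partial x$ projects to the norm element $\sum_{j=0}^{m-1}q(x)^j$, whose augmentation is $m\neq 0$. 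Finally, even if you could force all diagonal entries to equal $1$, that yields nothing without an upper-triangular shape, which you have not arranged. The working abelian argument is quite different: because $\varphi\in\Aut(F_n;R)$ acts trivially on $G$, the Fox-Jacobian map $\varphi\mapsto J(\varphi)$ is a genuine (anti)homomorphism to $\GL_n(\Z[G])$, and the classical Birman identity gives $\det J(\varphi)=\pm w$ for some $w\in F_n^{\mathrm{ab}}$; projecting to a simple factor $B$ this determinant is $\pm$ a root of unity, hence torsion, hence trivial after passing to a finite-index subgroup. No unipotence enters. For the metabelian step you invoke Clifford theory and monomial induction, which is a plausible shape, but you then write that the key compatibility of reduced norms with induction ``is essentially the technical content of Grunewald and Lubotzky's argument in \cite{GL}, which I would reproduce here'' --- that is exactly the content you are being asked to supply, so this part is a placeholder rather than a proof.
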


\noindent The goal of the section is the following lemma.

\begin{lemma} \label{lemma:rednrm1ifgoodGorpropT}
	Suppose that $G$ is metabelian, that $B = \Mat_m(\Q)$ for $m \geq 1$ or that $\Aut(F_m)$ has property (T) for all $m \geq 5$. 
	Then, $\tilde{\rho}(\Aut(F[W], \mc U; R))$ lies virtually in $\tilde{\mc G}(B)$. 
\end{lemma}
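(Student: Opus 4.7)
The plan is to leverage Corollary \ref{cor:upperboundworednrm}, which furnishes a finite-index subgroup $\Delta < \Aut(F[W], \mc U; R)$ with $\tilde{\rho}(\Delta) \subseteq \tilde{\mc G}'$; in particular, conditions $1, 3, 4, 5$ of $\tilde{\mc G}(B)$ already hold for $\tilde{\rho}(\Delta)$. The missing condition is condition $2$: for every $v \in W$, the restriction to $C_B[U(v)]$ has reduced norm $1$. Reduced norm assembles into a group homomorphism
$$ \nrd_v \colon \tilde{\rho}(\Delta) \to Z(B)^*, $$
and it suffices to show that the image of $\nrd_v$ is finite for each of the finitely many $v \in W$; replacing $\Delta$ by $\tilde{\rho}|_\Delta^{-1}\bigl( \bigcap_v \ker(\nrd_v) \bigr)$ then produces a finite-index subgroup whose $\tilde{\rho}$-image lies in $\tilde{\mc G}(B)$.

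When $B \cong \Mat_m(\Q)$, the center is $\Q$ and $\nrd_v$ coincides with the ordinary determinant under the identification $\End_B(C_B[U(v)]) \cong \Mat_{m|U(v)|}(\Q)$. Since $\Delta$ preserves the $\mcO$-span of $\{\hat w_B\}$, its restriction to $C_B[U(v)]$ lies in the unit group of the $\Z$-order $\Mat_{|U(v)|}(\mcO^{op})$ inside $\End_B(C_B[U(v)])$; reduced norms on this order take values in $\Z$, and invertibility forces $\nrd_v \in \{\pm 1\}$, which is finite.

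For the metabelian case, the plan is to restrict each $\varphi \in \Delta$ to the invariant free factor $F[U(v)]$, yielding an element of $\Aut(F[U(v)]; R \cap F[U(v)])$ whose induced action on $C_B[U(v)]$ is precisely the action on the appropriate $B_v$-isotypic component of the $1$-chains of the cover of a wedge of $|U(v)|$ circles determined by $R \cap F[U(v)]$. Here $G_v := q(F[U(v)])$ remains metabelian, and $B_v$ is the simple factor of $\Q[G_v]$ through which $B$ naturally factors. Using Gasch\"utz's decomposition to identify the $1$-chain module with $H_{B_v}$ up to a trivial summand on which reduced norm is automatically $1$, Proposition \ref{prop:rednrmifmetabelian} then transfers the desired conclusion to $\nrd_v$. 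The main obstacle is to make the identification between $B$ and $B_v$ precise and to handle the degenerate ranks $|U(v)| \leq 2$ where Proposition \ref{prop:rednrmifmetabelian} is silent; in those cases a direct computation using the explicit form of $\tilde{\nu}(\varphi)$ from Lemma \ref{lemma:finallyniceform} should suffice.

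Under the property~$(T)$ hypothesis, when $|W| \geq 5$ the group $\Aut(F[W])$ has property $(T)$, so its finite-index subgroup $\Delta$ also does and therefore has finite abelianization. Since $\nrd_v$ maps into the abelian group $Z(B)^*$, it factors through this abelianization and hence has finite image. The remaining cases $|W| \leq 4$ are small enough that the relevant quotients $G$ or simple factors $B$ fall under one of the first two hypotheses of the lemma.
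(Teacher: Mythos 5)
Your treatment of the $B \cong \Mat_m(\Q)$ case is essentially the paper's argument (reduced norm is determinant, preservation of an $\mcO$-lattice forces values in $\{\pm 1\}$), and your property (T) observation is valid and in fact slightly cleaner than the paper's for $|W| \geq 5$: since $\nrd_v \colon \tilde{\rho}(\Delta) \to Z(B)^*$ is a homomorphism to an abelian group, finite abelianization of $\Delta$ finishes it directly, without needing the paper's extension-by-identity device. However, your metabelian argument has a real gap, and your handling of the small-rank cases is not correct.

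For the metabelian case, you propose restricting $\varphi$ to $F[U(v)]$ and reinterpreting the action on $C_B[U(v)]$ as the action on the $B_v$-isotypic component of the $1$-chains of the cover of a smaller wedge with deck group $G_v = q(F[U(v)])$. The difficulty you flag is the heart of the matter and is not easily resolved: $B$ is a simple factor of $\Q[G]$, but $p(\Q[G_v])$ is merely a (possibly proper, possibly non-simple) subalgebra of $B$, so there is no natural ``$B_v$'' through which $B$ factors, and the $B$-module $C_B[U(v)]$ is not a $\Q[G_v]$-isotypic component of anything. Proposition \ref{prop:rednrmifmetabelian} as stated gives reduced norm $1$ for a simple factor of $\Q[G_v]$, not for the $B$-action you actually have. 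The paper sidesteps this entirely: rather than restricting the \emph{group} to $F[U(v)]$ and changing the algebra, it defines $s(\varphi)$ to be $\varphi$ on $F[U(v)]$ and the identity on $F[W-U(v)]$. This $s(\varphi)$ is still an element of $\Aut(F[W], \mc U; R)$, so the same $G$, the same $B$, and the same representation $\tilde{\rho}_B$ apply; by Lemma \ref{lemma:nrdonparabolic}, the reduced norm of $\tilde{\rho}_B(s(\varphi))$ on all of $C_B$ equals the reduced norm of $\tilde{\rho}_B(\varphi)$ restricted to $C_B[U(v)]$. Then Proposition \ref{prop:rednrmifmetabelian}, applied once to the full $\Aut(F[W]; R)$, controls all the $\nrd_v$ simultaneously via $s$. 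You should adopt this device rather than the base-change to $G_v$.

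The handling of the degenerate cases is also off. You say that when $|W| \leq 4$ the relevant $G$ or $B$ ``fall under one of the first two hypotheses of the lemma,'' but the hypotheses are alternatives, not automatic facts about small $G$ or $B$; there is no reason a small $W$ forces $G$ metabelian or $B \cong \Mat_m(\Q)$. The correct move (used in the paper in both Lemma \ref{lemma:okayifmetab} and Lemma \ref{lemma:rednrmifpropT}) is to enlarge $W$ to a superset $W'$ with $|W'| \geq 3$ (resp.\ $\geq 5$), extend $q$ by sending the new generators to the identity, and use the commutative square of representations to observe that the reduced norm is unchanged under extension by the identity (again Lemma \ref{lemma:nrdonparabolic}). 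Your appeal to ``a direct computation using Lemma \ref{lemma:finallyniceform}'' for $|U(v)| \leq 2$ is likewise unnecessary once the $s$-trick and the enlargement of $W$ are in place, and as written it is not clear such a computation would succeed: Lemma \ref{lemma:finallyniceform} only tells you $\tilde{\nu}(\varphi)(\hat v) = \hat v + z$ with $z \in H[U(v)]$, which does not by itself pin down a determinant.
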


\noindent The proof will rely on the following two lemmas which we prove later.

\begin{lemma} \label{lemma:okayifmetab}
	Suppose $G$ is a finite metabelian group.
	Then, there is a finite index subgroup $\Delta < \Aut(F[W], \mc U; R)$ such that 
	$\tilde{\rho}_{B}(\varphi)$ has reduced norm $1$ for all $\varphi \in \Delta$.	
\end{lemma}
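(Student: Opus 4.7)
The plan is to reduce Lemma \ref{lemma:okayifmetab} to Proposition \ref{prop:rednrmifmetabelian} by restricting automorphisms to the free subgroups $F[U(v)] \leq F[W]$ and exploiting the block-triangular structure of $\tilde{\rho}(\varphi)$. First I would invoke Corollary \ref{cor:upperboundallofC} to pass to a finite-index subgroup $\Delta_0 < \Aut(F[W], \mc U; R)$ on which every $\varphi$ preserves each $C[U(v)]$, preserves $H$, acts trivially on $C/H$, and acts trivially on $C[U(v)]/C[U'(v)]$ for every $v$ with trivial equivalence class. Since the reduced norm is multiplicative along short exact sequences in a finite-dimensional central simple algebra, and $\tilde{\rho}(\varphi)$ acts trivially on the subquotient $C_B[U(v)]/H_B[U(v)] \hookrightarrow C_B/H_B$, it then suffices to show that virtually the reduced norm of $\rho(\varphi)$ restricted to $H_B[U(v)]$ is $1$ for every $v \in W$.

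For each $v$, I would pass to the free subgroup $F[U(v)]$: the restriction $\varphi_v := \varphi|_{F[U(v)]}$ lies in $\Aut(F[U(v)]; R_v)$ where $R_v = R \cap F[U(v)]$, and the quotient $G_v = F[U(v)]/R_v$ is a subgroup of $G$, hence metabelian. Letting $\hat Y_v$ be the component of the preimage of $Y[U(v)] \subset Y$ in $\hat Y$ containing $\hat y$, $\hat Y_v$ is precisely the cover of $Y[U(v)]$ corresponding to $R_v$. Covering space theory then gives a $\Q[G]$-module identification
\[
H[U(v)] \;\cong\; \mathrm{Ind}_{G_v}^{G}\, \HH_1(R_v;\Q)
\]
under which the action of $\tilde{\nu}(\varphi)$ corresponds to the induction of the action of the analogous representation $\tilde{\nu}_v(\varphi_v)$ on $\HH_1(R_v;\Q)$. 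In particular, $H_B[U(v)] = 1_B \cdot H[U(v)] \cong B \otimes_{\Q[G_v]} \HH_1(R_v;\Q)$, with $B$ viewed as a right $\Q[G_v]$-module via right multiplication in $\Q[G]$.

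Assuming $|U(v)| \geq 3$, Proposition \ref{prop:rednrmifmetabelian} applied to $\Aut(F[U(v)]; R_v)$ (using that $G_v$ is metabelian) supplies a finite-index subgroup on which $\varphi_v$ has reduced norm $1$ on every simple factor of $\Q[G_v]$ acting on $\HH_1(R_v;\Q)$. Decomposing $B \cong \bigoplus_i B_i$ as a right $\Q[G_v]$-module (where $B_i$ is a $(B, C_i)$-Morita bimodule with $C_i$ ranging over the simple factors of $\Q[G_v]$ appearing in $B$) yields a corresponding direct sum decomposition of $H_B[U(v)]$; multiplicativity of reduced norms across this decomposition, together with Morita invariance, then gives the desired vanishing. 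The small-rank cases must be handled directly: if $|U(v)| = 1$ the claim is immediate from condition 3 holding in $\Delta_0$; if $|U(v)| = 2$ with both elements having trivial equivalence classes then the two-step filtration $C_B[U(v)] \supset C_B[U'(v)] \supset 0$ has trivial diagonal blocks, again by condition 3; and if $U(v)$ is itself an equivalence class of size $2$, I would enlarge the ambient free subgroup to some $F[U(w)]$ with $w \preceq v$ and $|U(w)| \geq 3$, apply Proposition \ref{prop:rednrmifmetabelian} there, and recover the restriction to $C_B[U(v)]$ via block-triangularity (this is possible whenever $|W| \geq 3$, and the lemma is vacuous otherwise). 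Intersecting over the finitely many $v \in W$ produces the desired $\Delta$.

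The main obstacle is the representation-theoretic bookkeeping in the induction/Morita step: although the decomposition of $H_B[U(v)]$ as a direct sum over simple factors of $\Q[G_v]$ appearing in $B$ is conceptually clear, verifying precisely how the reduced norm over $B$ factors as a product of reduced norms over the various $C_i$ requires careful handling of central idempotents, restriction of scalars, and the Morita equivalences between $B$ and the matrix algebras over the relevant quotients of $\Q[G_v]$. This is exactly where the metabelian hypothesis is used, since it is what makes the simple-factor-wise reduced norm computations accessible via Grunewald--Lubotzky.
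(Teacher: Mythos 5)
Your approach diverges substantially from the paper's, which is considerably more economical. The paper applies Proposition~\ref{prop:rednrmifmetabelian} exactly once, to the full map $q\colon F[W] \to G$ with the given $G$ and $B$: this immediately yields a finite-index $\Delta' < \Aut(F[W];R)$ on which $\rho_B(\varphi)$ has reduced norm~$1$ on all of $H_B$. Since $\tilde\rho_B(\varphi)$ is trivial on $C_B/H_B$ for $\varphi$ in the subgroup from Corollary~\ref{cor:upperboundallofC}, Lemma~\ref{lemma:nrdonparabolic} gives $\nrd(\tilde\rho_B(\varphi)) = 1$, and the $|W|\leq 2$ case is disposed of by embedding $F[W]$ into a larger free group with $q$ extended by the identity. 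Note that the lemma only asserts that the reduced norm of the \emph{full} automorphism of $C_B$ is $1$; the refinement you aim for --- reduced norm $1$ on each $C_B[U(v)]$ --- is achieved later in the proof of Lemma~\ref{lemma:rednrm1ifgoodGorpropT}, not here, and is done there by a cleaner device (the endomorphisms $s$ and $s'$ that kill the off-$U(v)$ part of $\varphi$) which re-uses this lemma as a black box rather than re-proving it subspace-by-subspace.

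Beyond the over-engineering, your plan has two concrete gaps. First, the Morita/induction step you flag as ``bookkeeping'' is in fact the crux, and it is not at all clear it works: Proposition~\ref{prop:rednrmifmetabelian} applied to $q|_{F[U(v)]}\colon F[U(v)]\to G_v$ produces reduced-norm-one statements over the simple factors of $\Q[G_v]$, while what you need is reduced norm $1$ over $B\subseteq\Q[G]$ acting on $H_B[U(v)]\cong B\otimes_{\Q[G_v]}\HH_1(R_v;\Q)$. The restriction of $B$ to a $\Q[G_v]$-module is generally not simple, the centers of the factors of $\Q[G_v]$ that appear need not match the center of $B$, and reduced norms over central simple algebras with different centers are not directly comparable. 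The paper sidesteps all of this by never descending to a subgroup $G_v < G$. Second, your handling of small ranks is faulty: the case $|W|=2$ is not vacuous (e.g.\ when the two generators are domination-equivalent, $\Aut(F[W],\mc U;R)$ is of finite index in $\Aut(F_2)$, which is infinite, and $\rho_B$ is far from trivial), and your plan for a size-$2$ equivalence class $U(v)$ relies on the existence of some $w\preceq v$ with $|U(w)|\geq 3$, which need not exist if that class is both maximal and minimal. Both cases are resolved by the paper's extension trick, which you would need to adopt in some form.
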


\begin{lemma} \label{lemma:rednrmifpropT}
	Suppose $\Aut(F_m)$ has property $(T)$ for $m \geq 5$.
	Then, there is a finite index subgroup $\Delta < \Aut(F[W], \mc U; R)$ such that 
	$\tilde{\rho}(\varphi)$ has reduced norm $1$ for all $\varphi \in \Delta$.
\end{lemma}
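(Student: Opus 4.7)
The plan is to realize the reduced-norm character as an abelian quotient of an ambient group with Kazhdan's property (T), so that the finiteness of its image follows from the standard fact that property-(T) groups have finite abelianization.

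The representation $\tilde{\rho}$ is defined, by construction, not only on $\Aut(F[W], \mc U; R)$ but on the larger group $\Aut(F[W]; R)$. Letting $K$ denote the center of $B$, the reduced norm $\nrd \colon \Aut_B(C_B) \to K^\times$ is a group homomorphism into an abelian group, so the composition
$$\chi := \nrd \circ \tilde{\rho} \colon \Aut(F[W]; R) \to K^\times$$
is an abelian character of $\Aut(F[W]; R)$. The latter is of finite index in $\Aut(F[W]) = \Aut(F_{|W|})$: the cosets embed into the finite permutation group on $F[W]/R$ via $\varphi \mapsto (xR \mapsto \varphi(x)R)$. Provided $|W| \ge 5$, the hypothesized property (T) of $\Aut(F_{|W|})$ transfers to its finite-index subgroup $\Aut(F[W]; R)$, forcing the abelianization of that subgroup to be finite. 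Consequently $\chi$, factoring through the abelianization, has finite image, and $\Delta := \ker(\chi) \cap \Aut(F[W], \mc U; R)$ is the desired finite-index subgroup of $\Aut(F[W], \mc U; R)$ on which $\tilde{\rho}(\varphi)$ has reduced norm $1$.

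The main obstacle is the low-rank case $|W| \le 4$, where property (T) at rank $|W|$ is not part of the hypothesis. One potential workaround is to embed $F[W] \hookrightarrow F[W \sqcup W']$ for an auxiliary set $W'$ with $|W \sqcup W'| \ge 5$, extend $R$ to a normal subgroup of the larger free group, and lift $B$ to a corresponding simple factor of the enlarged group algebra in a way that makes $\chi$ extend to an abelian character of (a finite-index subgroup of) $\Aut(F[W \sqcup W'])$. Verifying that the $B$-isotypic decomposition lifts compatibly, so that the extended character still computes $\nrd \circ \tilde{\rho}$ when restricted back to $\Aut(F[W], \mc U; R)$, is the technical step requiring care; once this is in place, the property-(T) argument at the higher rank closes the argument exactly as above.
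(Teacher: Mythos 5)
Your argument for $|W| \ge 5$ is exactly the paper's: $\nrd \circ \tilde\rho$ is an abelian character of the finite-index (hence property-(T)) subgroup $\Aut(F[W];R) < \Aut(F_{|W|})$, so it has finite image, and intersecting its kernel with $\Aut(F[W],\mc U;R)$ gives the desired $\Delta$.

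For $|W| \le 4$ you correctly identify the right idea (pass to a larger free group), but you overestimate the difficulty of the "technical step" and leave it unresolved, whereas the paper carries it out. The key simplification you appear to be missing is that one does not need to lift $B$ to a new isotypic factor or check compatibility of two different isotypic decompositions: one extends $q\colon F[W]\to G$ to $q'\colon F[W']\to G$ by sending the new generators to the identity, so the finite quotient $G$, the group algebra $\Q[G]$, and the chosen simple factor $B$ are all literally unchanged. With $R'=\ker q'$, the cover $\hat Y'\to Y'$ again has deck group $G$, its restriction over $Y\subset Y'$ is a copy of $\hat Y$, and $C_1(\hat Y;\Q)_B$ sits inside $C_1(\hat Y';\Q)_B$ as a free $B$-summand on which the two representations agree. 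Under "extend by identity on the extra generators," $\tilde\rho'_B(\varphi)$ is $\tilde\rho_B(\varphi)$ on the old summand and the identity on the complement, so by Lemma~\ref{lemma:nrdonparabolic} the reduced norms coincide, and the $|W'|\ge 5$ case finishes the argument. This is precisely the reduction the paper performs in the proof of Lemma~\ref{lemma:okayifmetab} and then re-invokes here, so with that detail filled in your proof matches the paper's.
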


Our proof will involve the reduced norm of the restriction of automorphisms to invariant
submodules. Reduced norm behaves essentially as determinants in this respect, and the precise
statement is contained in Lemma \ref{lemma:nrdonparabolic} in the appendix.

\begin{proof}[Proof of Lemma \ref{lemma:rednrm1ifgoodGorpropT}]
	We first consider the case where $B \cong \Mat_m(\Q)$ for some $m \geq 1$. In this case, 
	$\End_B(B^n) \cong \Mat_n(B^{op}) \cong \Mat_{mn}(\Q)$ is already isomorphic to a matrix
	algebra over a field, and so reduced norm is just the determinant. Up to changing $\tilde{\mc G}$
	to a commensurable group, we may assume $\mcO = \Mat_m(\Z)$, and so
	determinants of elements of $\tilde{\mc G}'$ (and restrictions to subspaces) lie in $\Z$. One can then readily deduce
	that $\tilde{\mc G}'$ is of finite index in $\tilde{\mc G}$.
	
	We now consider the cases where $G$ is metabelian or $\Aut(F_m)$ has property (T) for $m \geq 5$. 
	Fix arbitrary $v \in W$ and let $U = U(v)$. It suffices to show that
	all reduced norms are $1$ when restricting to $C_B[U]$ after 
	possibly passing to a finite index subgroup of $\Aut(F[W], \mc U; R)$.
	Since $F[U] < F[W]$ is invariant under this group, we can define a homomorphism
	$s \colon \Aut(F[W], \mc U; R) \to \Aut(F[W], \mc U; R)$ by $s(\varphi)|_{F[U]} = \varphi|_{F[U]}$
	and $s(\varphi)|_{F[W-U]} = \id$. Then, $\tilde{\rho}_B(s(\varphi))$ and 
	$\tilde{\rho}_B(\varphi)$ have identical actions on $C_B[U]$, but $\tilde{\rho}_B(s(\varphi))$
	is the identity on $C_B[W-U]$.
	
	Similarly, we can define $s' \colon \tilde{\mc G}'(B) \to \tilde{\mc G}'(B)$ by
	$s'(\psi)|_{C_B[U]} = \psi|_{C_B[U]}$ and $s'(\psi)|_{C_B[W-U]} = \id$.
	Clearly, $\tilde{\rho}_B(s(\varphi)) = s'(\tilde{\rho}_B(\varphi))$, and so
	$s'$ restricts to a homomorphism from $\tilde{\rho}_B(\Aut(F[W], \mc U; R))$
	to itself. By Lemmas \ref{lemma:okayifmetab} and \ref{lemma:rednrmifpropT},
	there is a finite index subgroup $\Delta$ of $\tilde{\rho}_B(\Aut(F[W], \mc U; R))$
	where all elements have reduced norm $1$. However, the reduced norm of $s'(\tilde{\rho}_B(\varphi))$
	is the same as the reduced norm of the restriction of $\tilde{\rho}_B(\varphi)$ to 
	$C_B[U]$ by Lemma \ref{lemma:nrdonparabolic}. The preimage of $\Delta$ under $s'$ is the desired finite index subgroup of 
	$\tilde{\rho}_B(\Aut(F[W], \mc U; R))$.
\end{proof}

\begin{proof}[Proof of Lemma \ref{lemma:okayifmetab}]
	First, suppose $|W| \geq 3$.
	Let $\Delta' < \Aut(F[W]; R)$ be the subgroup from Proposition \ref{prop:rednrmifmetabelian}.
	Then, $\rho_B(\varphi)$ has reduced norm $1$ for all $\varphi \in \Delta$ where
	$\Delta = \Delta' \cap \Aut(F[W], \mc U; R)$. Since $\rho_B(\varphi)$ is
	the restriction of $\tilde{\rho}_B(\varphi)$ to $H_B$, and $\tilde{\rho}_B(\varphi)$
	is the identity on $C_B/H_B$, the reduced norm of $\tilde{\rho}(\varphi)$ is $1$
	for all $\varphi \in \Delta$ by Lemma \ref{lemma:nrdonparabolic}.
	
	Now, suppose $|W| \leq 2$, and extend $W$ to a superset $W'$ of size at least $3$.
	Extend the preorder $\preceq$ to $W'$ in some way and let 
	$$\mc U' = \{F[U] \mid U \subseteq W' \text{ is closed under upper bounds}\}.$$
	Let $R'$ be the kernel of the map $q': F[W'] \to G$ which extends
	$q: F[W] \to R$ by mapping $W' - W$ to the identity. Let $Y', \hat Y'$ be the corresponding
	spaces for $W'$ as defined in Section \ref{section:hatY}. Embed $Y$ in $Y'$ in the obvious way.
	Since the deck group of $\hat Y' \to Y'$ has deck group $G$, the preimage of $Y$ in
	$\hat Y'$ is a copy of the cover $\hat Y$. As $C_1(\hat Y'; Q)$ is a free $\Q[G]$-module
	on the $\hat w'$, we have $C_1(\hat Y; Q)$ (resp. $C_1(\hat Y; \Q)_B$) embeds
	as a free $\Q[G]$-summand (resp. free $B$-summand) of $C_1(\hat Y'; Q)$
	(resp. $C_1(\hat Y'; Q)_B$). Letting 
	$\tilde{\rho}_B' \colon \Aut(F[W']; R) \to \Aut_B(C_1(\hat Y'; Q)_B)$
	be the representation defined as in Section \ref{section:definingrho},
	we obtain the following commutative diagram where the horizontal maps
	are extension by identity on the extra generators.
	\begin{center}
		\begin{tikzcd}
			\Aut(F[W], \mc U; R)  \arrow{r} \arrow{d}{\tilde{\rho}_B} & \Aut(F[W'], \mc U'; R') \arrow{d}{\tilde{\rho}_B'} \\
			\Aut_B(C_1(\hat Y; \Q)_B)  \arrow{r} & \Aut_B(C_1(\hat Y'; \Q)_B)
		\end{tikzcd}
	\end{center}
	Since extending by the identity on a summand doesn't change the reduced norm
	by Lemma \ref{lemma:nrdonparabolic},
	we are finished by the previous case applied to $\Aut(F[W'], \mc U'; R')$.
\end{proof}

\begin{proof}[Proof of Lemma \ref{lemma:rednrmifpropT}]
	First, suppose $|W| \geq 5$.
	Since $\Aut(F[W]; R)$ is a finite index subgroup of $\Aut(F[W])$, it maps to a finitely
	generated abelian group under $\nrd \circ \tilde{\rho}_B$. Since $\Aut(F[W])$ has property (T),
	this image must be finite. Thus, some finite index subgroup of $\Aut(F[W]; R)$ and thus
	of $\Aut(F[W], \mc U; R)$ has image where all elements have reduced norm $1$.
	If $|W| \leq 4$, we can reduce to the case $|W| \geq 5$ in the same manner as in the previous proof.
\end{proof}

\begin{proof}[Proof of Theorem \ref{thm:maintechthmupperbound}]
	Combine Corollary \ref{cor:upperboundworednrm} and Lemma \ref{lemma:rednrm1ifgoodGorpropT}.
\end{proof}

\begin{proof}[Proof of Corollary \ref{cor:maintechcorupperbound}]
	Apply Theorem \ref{thm:maintechthmupperbound}.
\end{proof}

\section{Lower bound on the image} \label{section:lowerbound}

It remains to show that $\tilde{\rho}(\Aut(F[W], \mc U; R))$ virtually contains $\tilde{\mc G}$. Similar to \cite{GLLM, GL}, 
we will do this by showing that the image contains sufficiently many unipotents.
We continue to use the same simplified notation as in Section \ref{section:upperbound}. 

\subsection{Generating $\SL$}
Our proof relies fundamentally on the theorem that $\SL_n(\mcO)$ for $n \geq 3$ is generated, up to finite index,
by elementary matrices. By $\SL_n(\mcO)$ or $\SL_n(B)$, we refer to the matrices in
$\Mat_n(\mcO)$ and $\Mat_n(B)$ of reduced norm $1$.
The precise statement below
is almost identical to Proposition 5.1 of \cite{GL} in content and follows from the main
result of \cite{Vas}.


\begin{prop} \label{prop:transvgen}
	Let $B = \Mat_m(E)$ for some finite dimensional division algebra $E$ over $\Q$. Let $n \geq 2$
	and assume $m \geq 2$ if $n = 2$. Let $\mcO \subseteq B$ be an order in $B$, and let
	$\mathfrak G < \Aut_{\mcO}(\mcO^n)$ be the subgroup of reduced norm $1$. Let
	$e_1, \dots, e_n$ be the canonical free generating set for the left $\mcO$-module $\mcO^n$.
	For any two-sided ideal $\mathfrak a \subseteq \mcO$, let 
	$\mathfrak T_{\mathfrak a} = 
		\{ T_{i,j,a} \in \mathfrak G \;\; | \;\; 1 \leq i, j \leq n, i \neq j, a \in \mathfrak a\}$
	where $T_{i,j,a}$ is defined by
	$$ T_{i,j,a}(e_i) = e_i + a e_j  \;\;\;\;\; T_{i,j,a}(e_k) = e_k \text{ if } k \neq i$$
	Then, the set $\mathfrak T_{\mathfrak a}$ generates a finite index subgroup of $\mathfrak G$.
\end{prop}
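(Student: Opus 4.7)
My plan is to deduce this from Vaserstein's theorem on generation of $\SL_N$ by elementary matrices for orders in semisimple $\Q$-algebras, combined with Morita equivalence. First, I would use the identification $\Mat_n(B) \cong \Mat_n(\Mat_m(E)) \cong \Mat_{mn}(E)$ to rewrite the picture in terms of $E$ rather than $B$. Fix an order $\mcO_E \subseteq E$. Then $\mcO$ is commensurable with $\Mat_m(\mcO_E)$ inside $B$, and $\End_{\mcO}(\mcO^n)$ is commensurable with $\Mat_{mn}(\mcO_E)$ (the passage to commensurable orders is harmless because finite index is preserved). Under this identification, reduced norm on $\End_B(B^n)$ agrees with the reduced norm on $\Mat_{mn}(E)$, so $\mathfrak G$ corresponds, up to finite index, to $\SL_{mn}(\mcO_E)$.

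Second, I would identify the images of the $T_{i,j,a}$ under this rewriting. Each $T_{i,j,a}$ for $a \in \mathfrak a \subseteq \Mat_m(\mcO_E)$ becomes an $m \times m$ block elementary matrix in $\Mat_{mn}(\mcO_E)$. By Morita theory, any two-sided ideal of $\Mat_m(\mcO_E)$ has the form $\Mat_m(\mathfrak b)$ for a two-sided ideal $\mathfrak b \subseteq \mcO_E$, so $\mathfrak a$ contains all matrices whose only nonzero entry is a single element $\beta \in \mathfrak b$. Consequently, $\mathfrak T_{\mathfrak a}$ contains every standard elementary matrix $E_{(i,k),(j,l),\beta}$ of $\Mat_{mn}(\mcO_E)$ whenever the block indices satisfy $i \neq j$. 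The remaining ``intra-block'' elementary matrices $E_{(i,k),(i,l),\beta}$ with $k \neq l$ are recovered from the Steinberg relation $[E_{(i,k),(j,l'),\beta}, E_{(j,l'),(i,l),1}] = E_{(i,k),(i,l),\beta}$ (taking any $j \neq i$, using that $n \geq 2$). Hence $\mathfrak T_{\mathfrak a}$ generates all $\mathfrak b$-elementary matrices in $\Mat_{mn}(\mcO_E)$.

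Third, I would invoke Vaserstein's theorem: for $N \geq 3$ and a two-sided ideal $\mathfrak b$ of an order $\mcO_E$ in a finite-dimensional division algebra over $\Q$, the elementary matrices at level $\mathfrak b$ generate a finite index subgroup of the congruence subgroup of $\SL_N(\mcO_E)$ at level $\mathfrak b$. Since $\mcO_E/\mathfrak b$ is finite, this congruence subgroup has finite index in $\SL_N(\mcO_E)$. Our hypotheses $n \geq 3$, or $n = 2$ with $m \geq 2$, guarantee $N = mn \geq 3$, so the theorem applies, and combined with the commensurability $\mathfrak G \sim \SL_{mn}(\mcO_E)$ we conclude that $\mathfrak T_{\mathfrak a}$ generates a finite index subgroup of $\mathfrak G$.

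The main obstacle is the boundary case $n = 2$: $\SL_2$ over rings outside the stable range need not be generated by elementary matrices (e.g.\ for $\SL_2(\Z[\sqrt{-19}])$ or similar anomalies), so the condition $m \geq 2$ when $n = 2$ is essential — it is exactly what pushes $mn$ into the range where Vaserstein's stability theorem is available. A secondary technical point is verifying that the commensurabilities introduced by replacing $\mcO$ by $\Mat_m(\mcO_E)$ and by replacing $\mathfrak b$ by $\mathfrak b$ itself (rather than $\mathfrak b^2$ if needed to apply the Steinberg commutator identity with the identity element) do not spoil the finite-index conclusion; this is routine but must be tracked.
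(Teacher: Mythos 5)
Your approach is correct in spirit and arrives at the same destination, but it differs from the paper's route in a couple of genuine ways, and one of your steps needs a small repair.

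The paper splits into two cases. For $n \geq 3$ it simply cites \cite{GL}[Prop.\ 5.1] directly over $\mcO$, with no passage to $\mcO_E$ at all, using the identification $\mathfrak G \cong \SL_n(\mcO)^{op}$. Only for $n = 2$ does it replace $\mcO$ by the commensurable order $\Mat_m(\mcO_E)$ and note that $\mathfrak a$ contains $\Mat_m(N\mcO_E)$ for some $N$; and even then it avoids any Steinberg computation by observing that the block elementary matrices already contain the groups $H(2m,2m;N\mcO_E)$ and $V(2m,2m;N\mcO_E)$ of \cite{GL}, which that same proposition says generate a finite-index subgroup. Your proposal instead passes to $\Mat_{mn}(\mcO_E)$ uniformly in all cases, derives the missing intra-block elementary matrices via a commutator, and invokes Vaserstein directly. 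This buys a single unified argument at the cost of an extra commutator step that the paper sidesteps, and it appeals to the primary source (Vaserstein) rather than the packaged version in \cite{GL}. Both are legitimate; yours is arguably more self-contained, the paper's is shorter.

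Two repairs are needed, the first substantive. In your Steinberg step you write $[E_{(i,k),(j,l'),\beta},\, E_{(j,l'),(i,l),1}] = E_{(i,k),(i,l),\beta}$, but $E_{(j,l'),(i,l),1}$ has the entry $1$, which is not in $\mathfrak b = N\mcO_E$ when $N>1$, so that factor need not lie in the group generated by $\mathfrak T_{\mathfrak a}$. The correct move is $[E_{(i,k),(j,l'),\beta},\, E_{(j,l'),(i,l),\beta'}] = E_{(i,k),(i,l),\beta\beta'}$ with $\beta,\beta' \in \mathfrak b$, which lands you at level $\mathfrak b^2$; since $\mathfrak b^2$ is still a nonzero (hence finite-index) two-sided ideal of $\mcO_E$, Vaserstein applies to it and the finite-index conclusion survives. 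You gesture at this in your closing paragraph but the phrasing is backwards (you say ``replacing $\mathfrak b$ by $\mathfrak b$ itself (rather than $\mathfrak b^2$\dots)''), and it should be stated cleanly as part of the argument rather than flagged as a technicality to track. Second, a minor point: $\mathfrak a$ is an ideal of $\mcO$, not of $\Mat_m(\mcO_E)$, so the Morita-theoretic description of its ideals does not apply directly; what you actually use is simply that the lattice $\mathfrak a \cap \Mat_m(\mcO_E)$ has finite index and hence contains $\Mat_m(N\mcO_E)$ for some $N$, which is exactly how the paper phrases it. (Also, strictly speaking the Morita identification gives $\End_B(B^n)\cong \Mat_{mn}(E^{op})$, but since $E^{op}$ is again a division algebra this is harmless.)
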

\begin{proof}
	In the case of $n = 3$, this follows directly from Proposition 5.1 of \cite{GL},
	and the canonical isomorphism $\mathfrak G \cong \SL_n(\mcO)^{op}$.
	We therefore consider the case $n = 2$, in which case $m \geq 2$ by assumption.
	Let $\mcO_E$ be an order of $E$. Then $\Mat_m(\mcO_E)$ is also an order of $B$, and since
	$B$ is a finite-dimensional simple $\Q$-algebra, $\mcO$ and $\Mat_m(\mcO_E)$ are commensurable.
	Consequently, $\mathfrak G$ and $\SL_{nm}(\mcO_E)^{op}$ are commensurable under the
	canonical identifications to subgroups of $\SL_2(B)^{op} \cong \SL_{2m}(E)^{op}$.
	Moreover, $\mathfrak a$ contains $\Mat_m(N\mcO_E)$ for some integer $N > 0$.
	
	Thus, $\mathfrak T_{\mathfrak a}$ contains the subgroups $H(2m, 2m; N\mcO_E)$
	and $V(2m, 2m; N\mcO_E)$ as defined in Proposition 5.1 of \cite{GL}.
	By that proposition, $H(2m, 2m; N\mcO_E)$ and $V(2m, 2m; N\mcO_E)$ generate
	a finite index subgroup of $\SL_{nm}(\mcO_E)$, and hence of
	$\SL_{nm}(\mcO_E)^{op}$. Thus, $\mathfrak T_{\mathfrak a}$ generates a
	finite index subgroup of $\mathfrak G$.
\end{proof}

\subsection{Preliminary Lemmas}
We begin with a few preliminary lemmas to help break down the proof of our theorems. We note that the lemmas
in this section depend on the definitions and notation of the theorems but don't require the assumptions $\ddagger$.
The proof of our theorems will require the use of automorphisms which map $v \mapsto zv$ for some $z \in F[U'(v)]$
and fix $w$ for $w \neq v$. Consequently, we start with a few lemmas describing the cycles $\overline{z}$
for $z$ supported on some free factor $F[U]$. Let $G(U)$ be the subgroup of $G$ generated by $q(U)$. 
It is clear that if $z \in F[U]$, then $\displaystyle \overline{z} = \sum_{v \in U} \alpha_v \hat v$
for some coefficients $\alpha_v \in \Q[G]$. Moreover, $\alpha_v$ must lie in $\Q[G(U)]$. We
provide partial converses of this statement.

To do this, we need to provide a description of the subspaces $H \subseteq C$ and $H_B \subseteq C_B$.
We temporarily revert to the expanded notation $H = \HH_1(\hat Y; \Q)$ and $C = C_1(\hat Y; \Q)$.
Recall that we view $\HH_1(\hat Y; \Q)$
as a cellular homology group, and that $\HH_1(\hat Y; \Q)$ is a 
$\Q[G]$-submodule of $C_1(\hat Y; \Q)$. Using $d_1: C_1(\hat Y; \Q) \to C_0(\hat Y; \Q)$ to denote the cellular
boundary map, we see that, for a $1$-chain $\hat v$, the image is $d_1(\hat v) = (q(v) - 1) \cdot \hat y$.
The cellular boundary maps are $G$-equivariant, and so we can restrict $d_1$
to $d_{1, B}\colon C_1(\hat Y; \Q)_B \to C_0(\hat Y; \Q)_B$, and then $d_{1, B}(\hat v_B) = 1_B(q(v)) - 1) \hat y$. 
Since $\HH_0(\hat Y; \Q) \cong \Q$ is the trivial $\Q[G]$-module, 
$d_{1, B}$ is surjective and $\HH_1(\hat Y; \Q)_B \cong B^{n-1}$ if $B$ is not the trivial
factor. Since they appear frequently, we define $a_v = q(v) - 1$ and $b_v = 1_B a_v$. The following
lemma is immediate.

\begin{lemma} \label{lemma:formofcycle}
	The $1$-chain $\displaystyle \sum_{v \in W} \alpha_v \hat v \in C$ is a $1$-cycle if and only if
	$\displaystyle \sum_{v \in W} \alpha_v a_v = 0$. The $1$-chain 
	$\displaystyle \sum_{v \in W} \beta_v \hat v_B \in C_B$ is a $1$-cycle if and only if
	$\displaystyle \sum_{v \in W} \beta_v b_v = 0$. 
\end{lemma}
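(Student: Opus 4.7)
The plan is to unpack the definitions: a chain is a $1$-cycle exactly when it lies in the kernel of the cellular boundary $d_1 \colon C_1(\hat Y; \Q) \to C_0(\hat Y; \Q)$, and the whole lemma reduces to evaluating $d_1$ on a general linear combination using the formula $d_1(\hat v) = a_v \cdot \hat y$ stated just before the lemma.

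First, I would note that since $Y$ has a single vertex, $\hat Y^0$ is the single $G$-orbit $G \cdot \hat y$, so $C_0(\hat Y; \Q)$ is free of rank $1$ as a left $\Q[G]$-module, generated by $\hat y$. Next, $G$-equivariance of $d_1$ lets me extend the formula $d_1(\hat v) = a_v \hat y$ to all $\Q[G]$-linear combinations, giving
$$ d_1\Bigl(\sum_{v \in W} \alpha_v \hat v\Bigr) \;=\; \sum_{v \in W} \alpha_v \cdot d_1(\hat v) \;=\; \Bigl(\sum_{v \in W} \alpha_v a_v\Bigr) \cdot \hat y. $$
Because $\hat y$ is a free $\Q[G]$-generator, this expression vanishes if and only if $\sum_v \alpha_v a_v = 0$ in $\Q[G]$, which is the first statement.

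For the second statement, I would apply the same reasoning after passing to the $B$-isotypic component. Since the isotypic decomposition is $\Q[G]$-functorial, $d_1$ restricts to $d_{1,B} \colon C_1(\hat Y;\Q)_B \to C_0(\hat Y;\Q)_B$, and $C_0(\hat Y;\Q)_B = 1_B \cdot C_0(\hat Y;\Q)$ is free of rank $1$ as a left $B$-module generated by (the image of) $\hat y$. The relation $d_{1,B}(\hat v_B) = b_v \cdot \hat y$ was already observed in the text, so the identical linearity computation yields $d_{1,B}(\sum_v \beta_v \hat v_B) = (\sum_v \beta_v b_v) \cdot \hat y$, which is zero iff $\sum_v \beta_v b_v = 0$ in $B$.

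There is no real obstacle: the lemma is essentially a bookkeeping restatement of the boundary formula, and both halves follow from the same one-line calculation combined with the freeness of $C_0(\hat Y;\Q)$, respectively $C_0(\hat Y;\Q)_B$, as a rank-$1$ module over the relevant coefficient ring.
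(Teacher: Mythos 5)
Your proof is correct and takes exactly the approach the paper intends: the paper establishes the boundary formula $d_1(\hat v) = a_v \hat y$ and its $B$-isotypic version immediately before the lemma and then declares the lemma ``immediate,'' and your write-up simply spells out the linearity and rank-one freeness argument that makes it so.
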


\begin{lemma} \label{lemma:repcyclesbywords}
	Let $U \subseteq W$, and let $\displaystyle z = \sum_{v \in U} \alpha_v \hat v \in H[U]$.
	If $\alpha_v \in \Q[G(U)]$ for all $v \in U$, then there exists $M \in \Z$ such that
	$M z = \overline{x}$ for some word $x \in F[U] \cap R$.
\end{lemma}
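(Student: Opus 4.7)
The plan is to realize $Mz$ as the cellular $1$-chain of an explicit based loop in a subcover of $\hat Y$ corresponding to $F[U]$. Let $Y[U] \subseteq Y$ denote the subgraph spanned by the edges indexed by $U$, so $\pi_1(Y[U], y) = F[U]$, and let $\hat Y[U] \subseteq \hat Y$ denote the connected component of the preimage of $Y[U]$ that contains $\hat y$. Standard covering theory identifies $\hat Y[U] \to Y[U]$ as the regular cover corresponding to $F[U] \cap R \triangleleft F[U]$, with deck group $F[U]/(F[U] \cap R) \cong G(U)$; its oriented edges are precisely $\{g \cdot \hat v : v \in U,\ g \in G(U)\}$.

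Next I would clear denominators to obtain $M \in \Z$ with $M\alpha_v \in \Z[G(U)]$ for every $v \in U$. Then $Mz$ is an integer linear combination of the edges $g \cdot \hat v$ with $g \in G(U)$, $v \in U$, and hence lies in $C_1(\hat Y[U]; \Z)$. Since $z$ is a $1$-cycle in $\hat Y$ and the cellular boundary on $\hat Y[U]$ agrees with the restriction of the boundary on $\hat Y$, we get $Mz \in Z_1(\hat Y[U]; \Z)$. As $\hat Y[U]$ is a graph, $Z_1(\hat Y[U]; \Z) = H_1(\hat Y[U]; \Z)$, so $Mz$ is an integer $1$-homology class in a connected graph with basepoint $\hat y$.

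Finally, I would use the standard fact that in a connected graph every integer $1$-cycle is the chain of an honest based loop. Choose a spanning tree $T$ of $\hat Y[U]$ and let $\{\gamma_e\}_{e \notin T}$ be the fundamental loops at $\hat y$; each $\gamma_e$ projects to a loop in $Y[U]$ representing a word $y_e \in F[U]$, and since $\gamma_e$ closes at $\hat y$ one has $y_e \in F[U] \cap R$, so $q(y_e) = 1$. Writing $Mz = \sum_{e \notin T} n_e \, \overline{y_e}$ and setting $x = \prod_{e \notin T} y_e^{n_e} \in F[U] \cap R$, iterated application of Lemma \ref{lemma:foxcalc} collapses every $G$-shift to the identity (the relevant prefixes all lie in $R$), yielding $\overline{x} = \sum_{e \notin T} n_e \, \overline{y_e} = Mz$. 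The one place I expect to have to be careful is verifying that $Mz$ is genuinely supported on $\hat Y[U]$ rather than on some other component of the preimage of $Y[U]$, but this is immediate since each $\hat v$ with $v \in U$ lies in $\hat Y[U]$ and $\hat Y[U]$ is $G(U)$-invariant, so every edge $g\cdot \hat v$ with $g \in G(U)$ remains in $\hat Y[U]$.
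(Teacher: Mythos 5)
Your argument is correct and matches the paper's proof in essentially every respect: both pass to the subcomplex spanned by the edges of $U$, take the component $\hat Y'$ of its preimage containing $\hat y$ (the cover of $Y[U]$ with deck group $G(U)$), clear denominators so that $Mz$ becomes an integral $1$-cycle supported on $\hat Y'$, and then invoke the fact that in a connected graph every integral $1$-cycle is the chain of a based loop. The spanning-tree computation you include is just an explicit unpacking of the step the paper leaves as standard.
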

\begin{proof}
	Let $Y' \subseteq Y$ be the subcomplex consisting
	of the single $0$-cell and the edges corresponding to $U$. Let $\hat Y' \subseteq \hat Y$
	be the component of the preimage of $Y'$ under the covering map $\hat Y \to Y$ such that 
	$\hat Y'$ contains the basepoint $\hat y$. 
	Then $\hat Y' \to Y'$ is precisely the covering space with deck group $G(U)$, and so the
	left action of $G(U)$ preserves $\hat Y'$. Since $z$ is a rational cellular $1$-cycle,
	some multiple $M z$ is an integral cellular $1$-cycle. Since $\alpha_v \in \Q[G(U)]$ and 
	the $1$-chain $\hat v$ is supported on $\hat Y'$, the $1$-cycle $M z$ lies entirely in $\hat Y'$.
	Consequently, $M z$ is homologous to some based loop in $\hat Y'$, and the lemma follows.
\end{proof}

We would like to say that a cycle satisfying the above in the $B$-isotypic component is the projection
of some cycle $\overline{x}$ for some $x \in F[U] \cap R$. While that's true, it is not immediate as the proof of the next lemma 
shows. We let $p: \Q[G] \to B$ denote the projection. Note that $1_B \alpha = p(\alpha)$ for $\alpha \in \Q[G]$.

\begin{lemma} \label{lemma:lifttogroupring}
	Let $U \subseteq W$, and let $\displaystyle z = \sum_{v \in U} \beta_v \hat v \in H_B[U]$.
	If $\beta_v \in p(\Q[G(U)])$ for all $v \in U$, then there exists $M \in \Z$ such that
	$M z = 1_B \overline{x}$ for some word $x \in F[U] \cap R$.	
\end{lemma}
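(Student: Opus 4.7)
The plan is to reduce to Lemma \ref{lemma:repcyclesbywords} by lifting $z$ to a cycle $\tilde z \in H[U]$ with coefficients in $\Q[G(U)]$ satisfying $1_B \cdot \tilde z = z$. Granted such a lift, Lemma \ref{lemma:repcyclesbywords} applied to $\tilde z$ yields an integer $M$ and a word $x \in F[U] \cap R$ with $M \tilde z = \overline{x}$, and applying $1_B$ to both sides produces $M z = 1_B \overline{x}$, which is the desired conclusion.

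The main obstacle is ensuring the honest cycle condition $\sum_{v \in U} \alpha_v a_v = 0$ in $\Q[G(U)]$, rather than merely its image under $p$ vanishing. A naive choice of lifts $\alpha_v \in \Q[G(U)]$ with $p(\alpha_v) = \beta_v$ only guarantees the weaker condition $p\bigl(\sum_v \alpha_v a_v\bigr) = 0$. To upgrade this, I would exploit the semisimplicity of $\Q[G(U)]$ (Maschke's theorem, since $G(U) \leq G$ is finite): the surjective ring homomorphism $p|_{\Q[G(U)]} \colon \Q[G(U)] \to B_U$, where $B_U$ denotes $p(\Q[G(U)])$, has a two-sided ideal kernel which is a product of Wedderburn factors. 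Consequently there is an internal ring decomposition $\Q[G(U)] = B_U' \oplus K$ with $K = \ker(p|_{\Q[G(U)]})$ and $p$ restricting to a ring isomorphism $B_U' \xrightarrow{\sim} B_U$.

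With this splitting in hand, I would define $\alpha_v \in B_U'$ to be the unique lift of $\beta_v$ under this isomorphism. Because the decomposition is a ring product, $B_U' \cdot K = 0$, and hence $\alpha_v a_v$ lies in $B_U'$ for every $v$, so $\sum_v \alpha_v a_v \in B_U'$ as well. Its image under $p$ equals $\sum_v \beta_v b_v$, which vanishes by the cycle hypothesis on $z$; since $p$ is injective on $B_U'$, the sum itself vanishes in $\Q[G(U)]$. Therefore $\tilde z = \sum_v \alpha_v \hat v$ is a genuine cycle in $H[U]$ with coefficients in $\Q[G(U)]$ projecting to $z$, so the hypotheses of Lemma \ref{lemma:repcyclesbywords} are met and the lemma follows via the reduction in the opening paragraph.
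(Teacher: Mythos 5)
Your proof is correct and follows essentially the same route as the paper: both reduce to Lemma \ref{lemma:repcyclesbywords} by constructing a lift $\tilde z$ with coefficients in $\Q[G(U)]$, and both exploit the semisimplicity of $\Q[G(U)]$ to split off the kernel of $p|_{\Q[G(U)]}$ as a ring direct factor so that a canonical section of $p$ exists and the cycle condition is preserved. The paper phrases this via projection onto Wedderburn factors $\prod_{i\in I} A_i$ rather than the internal decomposition $B_U' \oplus K$, but the two formulations are interchangeable.
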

\begin{proof}
	We can apply Lemma \ref{lemma:repcyclesbywords} if we
	can show that $z = 1_B \tilde z$ for some $\tilde z = \displaystyle \sum_{v \in U} \alpha_v \hat v \in H[U]$ 
	where $\alpha_v \in \Q[G(U)]$.
	However, if we pick arbitrary preimages $\alpha_v \in p^{-1}(\beta_v)$, there is no guarantee that we have a cycle.
	Since $z$ is a cycle, we know that $\displaystyle \sum_{v \in U} \beta_v b_v = 0$. It suffices
	to find $\alpha_v \in \Q[G(U)]$ for $v \in U$ satisfying $p(\alpha_v) = \beta_v$ and
	$\displaystyle \sum_{v \in U} \alpha_v a_v = 0$.
	
	Since $G(U)$ is finite, we know that $\Q[G(U)]$ is a semisimple algebra over $\Q$
	and thus it is a product $\Q[G(U)] \cong \prod_i A_i$
	for some finite collection $A_i$ of simple $\Q$-algebras. Thus, the image $p(\Q[G(U)])$
	is isomorphic to some subproduct $\prod_{i \in I} A_i$ and $p|_{\Q[G(U)]}$ is equivalent to the projection
	onto these factors. In other words, we may view $\beta_v \in \prod_{i \in I} A_i$, and we then
	choose $\alpha_v$ to be $\beta_v$ in the factor $\prod_{i \in I} A_i$ and $0$
	in $\prod_{i \notin I} A_i$.
	
	We then need to verify $\displaystyle \sum_{v \in U} \alpha_v a_v = 0$. Since $\alpha_v, a_v \in \Q[G(U)]$
	for all $v \in U$, it suffices to check equality after projecting to each factor $A_i$. If we project
	to the subproduct $\prod_{i \in I} A_i$, this is true since $\displaystyle \sum_{v \in U} \beta_v b_v = 0$.
	For $i \notin I$, all $\alpha_v$ project to $0$ and the equality is trivial.
\end{proof}

\begin{lemma} \label{lemma:invinonevar}
	Suppose $b_v$ is invertible in $B$. Then, $b_v^{-1} \in p(\Q[G(v)])$.
\end{lemma}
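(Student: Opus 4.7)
The plan is to observe that $G(v) = \langle q(v) \rangle$ is a finite cyclic group, so $\Q[G(v)]$ is a finite-dimensional commutative $\Q$-algebra, and hence so is its image $p(\Q[G(v)])$ inside $B$. Since $b_v = p(q(v) - 1)$, the element $b_v$ lies in this commutative finite-dimensional $\Q$-subalgebra.

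The key step is to pass to the even smaller $\Q$-subalgebra $\Q[b_v] \subseteq p(\Q[G(v)])$ generated by $b_v$. This is finite-dimensional over $\Q$. If $b_v$ is invertible in $B$, then multiplication by $b_v$ is an injective $\Q$-linear endomorphism of $\Q[b_v]$, hence surjective by finite-dimensionality. Thus there exists $c \in \Q[b_v]$ with $b_v c = 1$, and by uniqueness of inverses in $B$, this $c$ must equal $b_v^{-1}$. Therefore $b_v^{-1} \in \Q[b_v] \subseteq p(\Q[G(v)])$.

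There is essentially no obstacle here — it is a clean finite-dimensionality argument. Equivalently, one could phrase it as: $b_v$ satisfies a minimal polynomial over $\Q$ inside the finite-dimensional algebra $B$; invertibility forces the constant term of this polynomial to be nonzero, and rearranging expresses $b_v^{-1}$ as a $\Q$-polynomial in $b_v$. Either formulation gives the statement in one short paragraph.
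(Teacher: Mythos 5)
Your proof is correct, but it takes a genuinely different and more elementary route than the paper's. The paper decomposes $\Q[G(v)]$ as a product of fields $\prod_i K_i$ (using Maschke's theorem for semisimplicity plus commutativity of the abelian group $G(v)$), identifies $p|_{\Q[G(v)]}$ with projection onto a subproduct $\prod_{i \in I} K_i$, and then argues that invertibility of $b_v$ in $B$ forces each component of $b_v$ in that subproduct to be nonzero, so the inverse can be taken coordinatewise. Your argument sidesteps all of that structure theory: you only need that $p(\Q[G(v)])$ is a finite-dimensional $\Q$-algebra containing $1_B$ (which holds since $p$ is a unital ring map) and then invoke the standard fact that left multiplication by an element that is not a zero divisor in the ambient ring is injective, hence surjective, on the finite-dimensional subalgebra $\Q[b_v]$; equivalently, the constant term of the minimal polynomial of $b_v$ over $\Q$ must be nonzero. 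This buys you simplicity and independence from the semisimple structure; what the paper's approach buys is that it reuses, essentially verbatim, the decomposition set up in the proof of the preceding lemma (Lemma \ref{lemma:lifttogroupring}), so the two proofs share bookkeeping. Both are sound; yours is shorter as a standalone argument.
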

\begin{proof}
	As mentioned in the previous lemma, $\Q[G(v)]$ is a semisimple algebra. Moreover, since $G(v)$ is abelian,
	$\Q[G(v)]$ is a product of fields, $\prod_i K_i$. As above, $p|_{\Q[G(v)]}$ 
	is equivalent to projection onto some subproduct $\prod_{i \in I} K_i$. We view $b_v \in \prod_{i \in I} K_i$,
	and since it's invertible in $B$, it's neither a zero divisor in $B$ nor in $\prod_{i \in I} K_i \subset B$.
	Consequently, the components of $b_v$ in $\prod_{i \in I} K_i$ are nonzero, and 
	$b_v$ has an inverse in $\prod_{i \in I} K_i = p(\Q[G(v)])$.
\end{proof}

Our next lemma gives a slight refinement of the definition of $\tilde{\mc G}$. The definition of 
$\tilde{\mc G}$ requires that automorphisms have reduced norm $1$ when restricted to $C_B[U(v)]$.
In the next lemma, we show that elements of $\tilde{\mc G}(B)$ have reduced norm $1$ when passing to quotients of invariant subspaces.
Note that it follows from the definition of $\tilde{\mc G}(B)$ that, for any $U \subseteq W$ where $U$ contains all its upper bounds,
the subspace $C_B[U]$ is invariant under $\tilde{\mc G}(B)$. In particular, the subspace $C_B[U(W')-W']$ is invariant for any
equivalence class $W' \subseteq W$. For an equivalence class $W'$, we let 
$$\pi_{W'}: \tilde{\mc G}(B) \to \Aut_B( C_B[U(W')]/C_B[U(W')-W')]$$ be the induced action.

\begin{lemma} \label{lemma:rednrmonsubquot}
	Suppose $\psi \in \tilde{\mc G}(B)$. Then for any equivalence class $W' \subseteq W$, the induced automorphism
	$\pi_{W'}(\psi)$ has reduced norm $1$.
\end{lemma}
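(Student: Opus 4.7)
The plan is to combine multiplicativity of reduced norm for invariant submodules (Lemma~\ref{lemma:nrdonparabolic}) with condition 2 of the definition of $\tilde{\mc G}(B)$, namely that $\nrd(\psi|_{C_B[U(v)]}) = 1$ for every $v \in W$, and then induct downward on the equivalence-class poset.

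Having already observed that $C_B[U(W')]$ and $C_B[U(W')-W']$ are invariant under $\tilde{\mc G}(B)$, we have the short exact sequence
$$0 \to C_B[U(W')-W'] \to C_B[U(W')] \to C_B[U(W')]/C_B[U(W')-W'] \to 0$$
of invariant submodules. Lemma~\ref{lemma:nrdonparabolic} gives
$$\nrd(\psi|_{C_B[U(W')]}) \;=\; \nrd(\psi|_{C_B[U(W')-W']}) \cdot \nrd(\pi_{W'}(\psi)),$$
and since the left-hand side equals $1$ by condition 2 applied to any $v \in W'$ (so that $U(v) = U(W')$), it suffices to show $\nrd(\psi|_{C_B[U(W')-W']}) = 1$.

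For this I would induct on $|U(W')-W'|$. In the base case $W'$ is maximal, the set is empty, and there is nothing to check. In general, order the equivalence classes $W_1,\ldots,W_k$ lying in $U(W')-W'$ compatibly with the poset, so that $W_j$ strictly above $W_i$ forces $j<i$, and set $Y_i = W_1 \cup \cdots \cup W_i$. Each $Y_i$ is closed under upper bounds in $W$, so each $C_B[Y_i]$ is invariant. The crucial point is that the induced action of $\psi$ on $C_B[Y_i]/C_B[Y_{i-1}]$ coincides with $\pi_{W_i}(\psi)$: for $w \in W_i$ the expansion $\psi(\hat{w}_B) = \sum_{u \in U(W_i)} \alpha_u \hat{u}_B$ reduces to $\sum_{u \in W_i} \alpha_u \hat{u}_B$ modulo either $C_B[Y_{i-1}]$ or $C_B[U(W_i)-W_i]$, because the strict upper bounds of $W_i$ all lie in $Y_{i-1}$ by our choice of ordering. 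Iterated multiplicativity then yields $\nrd(\psi|_{C_B[U(W')-W']}) = \prod_i \nrd(\pi_{W_i}(\psi))$, and each factor equals $1$ by the inductive hypothesis applied to $W_i$ (whose up-set is strictly smaller than that of $W'$).

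The main thing to be careful about is the canonical identification of $C_B[Y_i]/C_B[Y_{i-1}]$ with $C_B[U(W_i)]/C_B[U(W_i)-W_i]$ as $B$-modules carrying the same induced $\psi$-action; once that is in place, the rest is routine bookkeeping with the multiplicativity of reduced norm.
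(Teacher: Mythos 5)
Your proof is correct and follows essentially the same route as the paper: a chain of invariant submodules $C_B[Y_0] \subseteq \cdots \subseteq C_B[Y_k]$ built from an order-compatible listing of the equivalence classes in $U(W')$, multiplicativity of reduced norm (Lemma \ref{lemma:nrdonparabolic}), and the identification of successive quotients $C_B[Y_i]/C_B[Y_{i-1}]$ with $C_B[U(W_i)]/C_B[U(W_i)-W_i]$. The only cosmetic differences are that the paper includes $W'$ as the last term of the chain and inducts on the length of the longest chain of equivalence classes above $W'$, whereas you split $W'$ off first via one application of Lemma \ref{lemma:nrdonparabolic} and induct on $|U(W')-W'|$; both inductions are well-founded and yield the same factorization.
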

\begin{proof}
	Let $m(W')$ be the maximum length of a chain of equivalence classes with $W'$ as a lower bound. 
	We induct on $m(W')$. If $m(W') = 1$, then $W'$ is maximal, $C_B[U(W')-W'] = 0$ and the claim follows by definition
	of $\tilde{\mc G}(B)$.
	Now consider $m(W') > 1$. Then $U(W')$ is a union of equivalence classes $W_1, W_2, \dots, W_t = W'$.
	We can,  w.l.o.g., order the $W_i$ such that $W' = W_t$ and $W_i \preceq W_j$ only if $i \geq j$.
	Letting $U_0 = \emptyset$ and $\displaystyle U_i = \bigcup_{j=1}^i W_i$ for $i > 0$, we see that $U_i$ contains its upper bounds.
	In particular, the chain of free $B$-modules $C_B[U_0] \subseteq C_B[U_1] \subseteq  \dots \subseteq C_B[U_t]$ is invariant under $\psi$.
	Note that there is a natural isomorphism
	$$C_B[U_i]/C_B[U_{i-1}] \cong C_B[U(W_i)]/C_B[U(W_i)-W_i]$$ because $U(W_i)-W_i \subseteq U_{i-1}$.
	We thus have $1 = \nrd(\psi|_{C_B[U(W')]}) = \prod_{i=1}^t \nrd(\pi_{W_i}(\psi))$ by Lemma \ref{lemma:nrdonparabolic}. 
	Since $m(W_i) < m(W')$ for $i < t$, we conclude
	by induction that $\prod_{i=1}^{t-1} \nrd(\pi_{W_i}(\psi)) = 1$, and thus, $\nrd(\pi_{W'}(\psi)) = 1$. 
\end{proof}

\subsection{Proof of Theorem \ref{thm:maintechthmlowerbound}}
The proof of the theorem is a bit lengthy and is broken into three stages which are themselves broken down
into smaller pieces. The first stage will simply establish that the image has certain unipotent elements.
Specifically, we want to show that for any $v \in W$ and $z \in H_B[U'(v)]$,
that the image has the automorphism which adds $Mz$ to $\hat v_B$ and fixes $\hat w_B$ for $w \neq v$ 
for some integer $M > 0$. It is easy to do this if the coefficients of $z$, 
expressed as a sum of $\{\hat u_B \mid u \in U'(v)\}$, lie in $\Q[G(U'(v))]$ using the above lemmas.
However, we need to remove this restriction on coefficients in order to prove our theorem. 

For the second and third stages, we break $\tilde{\mc G}$ into two parts, one which is (mostly) a product
of $\SL$'s and one which is nilpotent. Specifically, we let $\pi$ be the product of representations
$\prod_{W'} \pi_{W'}$ on $\tilde{\mc G}$ where $W'$ ranges over all equivalence classes in $W$.
Let $\mcS$ be the image of $\tilde{\mc G}$ under $\pi$ and $\mcN$ the kernel. Let 
$\Imrho = \rho(\Aut(F[W], \mc U; R))$. We show 
in the second stage that $\Imrho$ contains a finite index subgroup of $\mcN$.
In the third stage, we show that $\pi(\Imrho \cap \tilde{\mc G})$ contains a finite index subgroup
of $\mcS$. Together, these show $\Imrho$ contains a finite index subgroup of $\tilde{\mc G}$.

\subsubsection{Stage 1: Unipotents in the image}

For distinct $u, w \in U'(v)$ and $b \in B$ where $b_w \in B$ is invertible, let
$$ e_{u, w} = \hat u_B - b_u b_w^{-1} \hat w_B. $$
Observe that by Lemma \ref{lemma:formofcycle}, $e_{u, w} \in H_B$.
For $v \in W$ and $z \in H_B[U'(v)]$, let $T_{v, z}$ denote the automorphism of $C_B$ satisfying
$$T_{v, z}(\hat v_B) = \hat v_B + z$$ 
$$T_{v, z}(\hat w_B) = \hat w_B \text{ for all } w \neq v.$$
Note that $T_{v, z} \in \tilde{\mc G}(B)$.


\begin{claim} \label{claim:gettransvforast}
	Suppose $v \in W$ satisfies $|U(v)| \geq 3$.
	Let $\mu \in U(v)$ be a maximal element such that $b_\mu$ is invertible, and let $u \in U'(v) - \mu$.  
	For any such $v, u, \mu$ and for all $b \in B$, there exists $M \in \Z$ such that
	$T_{v, z} \in \Imrho$ where $z = M b e_{u, \mu}$.
\end{claim}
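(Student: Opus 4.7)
The plan is to realize the desired element $T_{v, M b e_{u, \mu}}$ as the image under $\tilde\rho$ of an explicit automorphism built from a conjugated transvection, and then upgrade the coefficient from the subring generated by $q(U'(v))$ to all of $B$. First I would produce a ``seed'' word: since $b_\mu$ is invertible, Lemma \ref{lemma:invinonevar} gives $b_\mu^{-1} \in p(\Q[G(\mu)])$, so both coefficients of
$$e_{u, \mu} = \hat u_B - b_u b_\mu^{-1} \hat \mu_B$$
lie in $p(\Q[G(\{u, \mu\})])$; Lemma \ref{lemma:lifttogroupring} then produces a word $y \in F[\{u, \mu\}] \cap R$ and an integer $M_0$ with $1_B \overline{y} = M_0\, e_{u, \mu}$. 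The map $\psi \colon v \mapsto y v$, fixing every other generator, is a well-defined element of $\Aut(F[W], \mc U; R)$: it is invertible with inverse $v \mapsto y^{-1} v$; every $F[U] \in \mc U$ is preserved because $y \in F[U'(v)] \subseteq F[U(v)]$; and $y \in R$ forces $\psi(v) \equiv v \pmod{R}$. By Lemma \ref{lemma:foxcalc}, $\tilde\rho(\psi) = T_{v, M_0 e_{u, \mu}}$.

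Next I would twist the seed by conjugation at the word level. For any $\tilde g \in F[U'(v)]$ with image $g = q(\tilde g) \in G$, the word $\tilde g y \tilde g^{-1}$ lies in $F[U'(v)] \cap R$, and the endomorphism $\varphi_g \colon v \mapsto \tilde g y \tilde g^{-1} v$, fixing all other generators, is an element of $\Aut(F[W], \mc U; R)$ by the same verification as for $\psi$. A short Fox-calculus computation using Lemma \ref{lemma:foxcalc} and $q(y) = 1$ simplifies $\overline{\tilde g y \tilde g^{-1}}$ to $g \cdot \overline{y}$, so that
$$\tilde\rho(\varphi_g) = T_{v,\, M_0\, p(g)\, e_{u, \mu}}.$$
Because $T_{v, z_1} T_{v, z_2} = T_{v, z_1 + z_2}$ for $z_1, z_2 \in H_B[U'(v)]$, taking products and inverses of the $\varphi_g$ realizes $T_{v, M_0 c\, e_{u, \mu}}$ in $\Imrho$ for every $c$ in the $\Z$-subring $p(\Z[G(U'(v))]) \subseteq \mcO$.

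Finally I would upgrade the scalar from this subring to an arbitrary $b \in B$. Condition $2$ of $\ddagger$ handles the two easy regimes: if $v$ is nonmaximal or if $v$ is maximal with $|U'(v)| = 2$, then $q(U'(v))$ generates $G$, so $p(\Z[G(U'(v))]) = \mcO$. Since $\mcO$ spans $B$ as a $\Q$-vector space, any $b \in B$ can be written $\tfrac{1}{N}\sum_i n_i c_i$ with $n_i \in \Z$ and $c_i \in \mcO$, and then $T_{v, M_0 N b\, e_{u, \mu}} = \prod_i T_{v,\, M_0 c_i e_{u, \mu}}^{n_i}$, so $M = M_0 N$ suffices. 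The main obstacle is the remaining subcase in which $v$ is maximal with $|U'(v)| \geq 3$, where $\ddagger$ only guarantees that $q(U(v))$ generates $G$, and thus one must allow $\tilde g$ to involve the letter $v$. I expect to resolve this by exploiting that $U(v)$ is then a maximal equivalence class of size at least $4$, admitting dominated transvections in both directions between every pair of its generators, so that composing $\psi$ with appropriate mutual transvections simulates a conjugation of $y$ by a word of $F[U(v)]$ while remaining inside $\Aut(F[W], \mc U; R)$, enlarging the available ring to all of $\mcO$.
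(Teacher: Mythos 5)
Your first three paragraphs are correct and recover the first two paragraphs of the paper's proof, though in a slightly different order of operations: you lift a seed word $y$ with $1_B\overline{y} = M_0 e_{u,\mu}$ (the $g = 1$ case), then inject $p(g)$ by conjugating $y$ at the word level and appealing to Lemma \ref{lemma:foxcalc}, whereas the paper directly applies Lemma \ref{lemma:lifttogroupring} to $M p(g) e_{u,\mu}$ for each $g \in G(U'(v))$. The reduction to $b = Mp(g)$, the verification that $v \mapsto \tilde g y \tilde g^{-1}v$ lies in $\Aut(F[W], \mc U; R)$, the passage to $\Z$-linear combinations, and the discharge of the easy regime via condition 2 of $\ddagger$ are all sound and match the paper.

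The last paragraph is a genuine gap, and the proposed fix does not work as stated. Observe that \emph{any} element of $\Aut(F[W], \mc U; R)$ that fixes every generator other than $v$ and maps $v \mapsto xv$ must have $x \in F[U'(v)] \cap R$, and Fox calculus then forces $1_B\overline{x}$ to have coefficients in $p(\Q[G(U'(v))])$; so such automorphisms alone can never produce $T_{v, b e_{u,\mu}}$ with $b$ outside $p(\Z[G(U'(v))])$. Conjugating $\psi$ by a transvection $\chi\colon u' \mapsto v u'$ for $u' \in U'(v)$ does not circumvent this: a direct calculation shows $\chi^{-1}\psi\chi$ perturbs more than one generator and, when $\chi$ is not in $\Aut(F[W]; R)$, the identity $\overline{\chi^{-1}(y)} = \tilde\nu(\chi^{-1})(\overline{y})$ that would let you push the $G$-action through is unavailable, so the hoped-for factor $p(q(v))$ never appears. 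The paper closes the case $v$ maximal, $|U'(v)| \geq 3$ by a genuinely different mechanism: choose a fourth vertex $w \in U(v)\setminus\{u,v,\mu\}$ and exploit the commutator identity $[T_{u, b' e_{w,\mu}}, T_{v, b e_{u,\mu}}] = T_{v, bb' e_{w,\mu}}$ (and a second one returning to $e_{u,\mu}$) among the unipotents you have already produced. The key point is that $v \in U'(u)$ and $v \in U'(w)$, so the coefficients $b', b''$ for $T_{u,\cdot}$ and $T_{w,\cdot}$ may lie in $p(\Z[G(U'(u))])$ and $p(\Z[G(U'(w))])$, both of which contain $p(q(v))$; iterating the commutators propagates coefficients through powers of $G(U'(u)) \cdot G(U'(w))$, which generates $G$ by condition 2 of $\ddagger$. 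Your phrase ``mutual transvections'' points roughly in this direction, but the commutator cascade, not conjugation of the seed, is what actually enlarges the coefficient ring.
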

\begin{remark} Note that such a $\mu$ exists by condition 1 of $\ddagger$.
\end{remark}
\begin{proof}
	Since $G$ is finite, it suffices to show that 
	$T_{v, z} \in \Imrho$ when $b = M p(g)$ for arbitrary $g \in G$ and appropriate integer $M > 0$ possibly depending on $g$.
	We first show that this holds for $g \in G(U'(v))$.
	By Lemmas \ref{lemma:lifttogroupring} and \ref{lemma:invinonevar}, there exists
	$x \in F[U'(v)] \cap R$ and $M > 0$ such that $1_B \overline{x} = z$.
	Let $\psi \in \Aut(F[W], \mc U; R)$ be the automorphism
	mapping $v \mapsto xv$ and $w \mapsto w$ for $w \neq v, w \in W$.
	(Note that there is such an automorphism precisely because $x \in F[U'(v)] \cap R$ and $v \notin U'(v)$.) 
	Then, $\rho(\psi) = T_{v, z}$.
	
	If $G(U'(v)) = G$, we are done.
	Assume now that $G(U'(v)) \neq G$. Condition 2 of $\ddagger$ then implies $v$ is maximal and $|U(v)| \geq 4$.
	Choose $w \in U(v)$ distinct from $u, v, \mu$.
	
	Now let $G_v = G(U'(v)), G_u = G(U'(u)),$ and $G_w = G(U'(w))$. Let $z_u = e_{u, \mu}, z_v = e_{v, \mu},$
	and $z_w = e_{w, \mu}$.
	 Since $G$ is finite, there is some sufficiently
	large $M$ such that $T_{v', be_{v''}} \in \Imrho$ for all distinct pairs $v', v'' \in \{w, u, v\}$ and all $b \in M p(G(U'(v')))$.
	We compute
	$$[T_{u, b' z_w}, T_{v, b z_u}] = T_{u, b' z_w}^{-1} T_{v, b z_u}^{-1} T_{u, b' z_w} T_{v, b z_u}
	 = T_{v, bb' z_w}$$
	 and similarly $$[T_{w, b''z_u}, T_{v, bb' z_w}] = T_{v, b b' b'' z_u}$$
	 In particular, we have $T_{v, b z_u} \in \Imrho$ for all $b \in M^3 p(G_v \cdot G_u \cdot G_w)$.
	 Taking a similar sequence of commutators, we find that 
	 $T_{v, b z_u} \in \Imrho$ for all $b \in M^5 p(G_v \cdot (G_u \cdot G_w)^2)$.
	 By induction, $T_{v, b z_u} \in \Imrho$ for all $b \in M^{2m+1} p(G_v \cdot (G_u \cdot G_w)^m)$.
	 Since the subgroups $G_u$ and $G_w$ generate $G$ by condition 2 of $\ddagger$, we find that there is some large $m$, where
	 $T_{v, b z_u} \in \Imrho$ for all $b \in M^{2m+1} p(G)$. This establishes the claim.
\end{proof}

\begin{claim} \label{claim:transvbyanycycle}
	For all $v \in W$ and $z \in H_B[U'(v)]$, there exists $M \in \Z$ such that
	$T_{v, Mz} \in \Imrho$.
\end{claim}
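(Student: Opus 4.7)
The plan is to reduce the claim to Claim \ref{claim:gettransvforast} by exhibiting a convenient $B$-module generating set for $H_B[U'(v)]$ consisting of the elements $e_{u,\mu}$. First I would dispose of the degenerate case: if $|U'(v)| \leq 1$, then by condition 1 of $\ddagger$ (which ensures a maximal element of $U'(v)$ with $b_u$ invertible whenever $U'(v) \neq \emptyset$), any cycle $\beta \hat{u}_B \in H_B[U'(v)]$ forces $\beta b_u = 0$ with $b_u$ invertible, so $H_B[U'(v)] = 0$ and the claim is trivial. Thus I may assume $|U(v)| \geq 3$, which is exactly the setting of Claim \ref{claim:gettransvforast}.

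Next I would fix a maximal $\mu \in U'(v)$ with $b_\mu$ invertible (using condition 1 of $\ddagger$) and verify the spanning lemma: every $z \in H_B[U'(v)]$ admits a unique expression
\[
z \;=\; \sum_{u \in U'(v) - \mu} \beta_u \, e_{u,\mu}
\]
with $\beta_u \in B$. Indeed, given $z = \sum_{u \in U'(v)} \beta_u \hat{u}_B$, the cycle condition from Lemma \ref{lemma:formofcycle} reads $\sum_u \beta_u b_u = 0$, hence $\beta_\mu = -\bigl(\sum_{u \neq \mu} \beta_u b_u\bigr) b_\mu^{-1}$, and substituting this into $z$ reproduces $\sum_{u \neq \mu} \beta_u e_{u,\mu}$ after collecting the $\hat{\mu}_B$ terms.

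With the decomposition in hand, Claim \ref{claim:gettransvforast} provides, for each $u \in U'(v) - \mu$, an integer $M_u$ with $T_{v,\, M_u \beta_u e_{u,\mu}} \in \Imrho$ (applied with the element $b = \beta_u \in B$). Note that powers of $T_{v, z'}$ satisfy $T_{v, z'}^k = T_{v, k z'}$ and, more generally, compositions add in the second argument: for $z_1, z_2 \in H_B[U'(v)]$ one has $T_{v, z_1} T_{v, z_2} = T_{v, z_1 + z_2}$, because $z_2$ is supported on $U'(v)$ and $T_{v, z_1}$ fixes $\hat{w}_B$ for $w \neq v$.

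Finally, choosing $M$ to be the least common multiple (or product) of the finitely many $M_u$, the identity $T_{v,\, M\beta_u e_{u,\mu}} = T_{v,\, M_u \beta_u e_{u,\mu}}^{M/M_u} \in \Imrho$ holds for every $u \in U'(v) - \mu$, whence
\[
T_{v,\, Mz} \;=\; \prod_{u \in U'(v) - \mu} T_{v,\, M \beta_u e_{u,\mu}} \;\in\; \Imrho,
\]
completing the proof. The only real content beyond Claim \ref{claim:gettransvforast} is the spanning identity, and the mild bookkeeping needed to exchange the $u$-dependent constants $M_u$ for a single integer $M$; I do not foresee any serious obstacle here.
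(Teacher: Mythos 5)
Your proof is correct and follows essentially the same route as the paper: dispose of the degenerate case $|U'(v)| \leq 1$ using condition 1 of $\ddagger$ and Lemma \ref{lemma:formofcycle}, pick a maximal $\mu \in U'(v)$ with $b_\mu$ invertible, show the $e_{u,\mu}$ span $H_B[U'(v)]$ via the cycle relation, and invoke Claim \ref{claim:gettransvforast}. The only difference is that you make explicit the ``it suffices'' step that the paper leaves implicit, namely the additivity $T_{v,z_1}T_{v,z_2} = T_{v,z_1+z_2}$ and passing to a common multiple $M$ of the finitely many $M_u$; this is a welcome fleshing-out but not a different argument.
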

\begin{proof}
	We first consider the case where $|U'(v)| \leq 1$. In the case where $U'(v) = \emptyset$, this is a vacuous statement.
	In the case where $U'(v) = \{u\}$, condition 1 of $\ddagger$ requires that $b_u$ be invertible. 
	Thus, $H_B[U'(v)] = 0$ by Lemma \ref{lemma:formofcycle}. We now assume that $|U'(v)| \geq 2$.
	
	By conditions $\ddagger$, there is some maximal $\mu \in U'(v)$ such that $b_\mu$ is invertible. By Lemma \ref{lemma:formofcycle}
	$$z = \beta_\mu \hat \mu_B + \sum_{u \in U'(v) - \mu } \beta_u \hat u_B$$
	for some $\beta_\mu, \beta_u$ satisfying 
	$$\beta_\mu b_\mu + \sum_{u \in U'(v) - \mu} \beta_u b_u = 0.$$ 
	Since $b_\mu$ is invertible, we can rewrite this as
	$$\beta_\mu = - \sum_{u \in U'(v) - \mu} \beta_u b_u b_\mu^{-1}.$$ Consequently, $H_B[U'(v)]$ is generated as a $B$-module
	by the set $$\{ e_{u, \mu} \mid u \in U'(v) - \mu\}.$$ It therefore suffices to prove the claim
	for $z = be_{u, \mu}$ for arbitrary $b \in B$ and $u \in U'(v) - \mu$, but this follows
	from Claim \ref{claim:gettransvforast}.
\end{proof}

\begin{remark} Note that for claims \ref{claim:gettransvforast} and \ref{claim:transvbyanycycle}, we may further increase $M$ as necessary
	to ensure that $T_{v, Mz} \in \tilde{\mc G}$.
\end{remark}

\subsubsection{Stage 2: The nilpotent part}
Let $C_\mcO$ be the $\mcO$-span of $\{\hat v_B  \mid v \in W\}$. We will show that $\Imrho$ contains
the $M\mcO$-congruence subgroup of $\mcN$ for a sufficiently large integer $M$, i.e. the kernel of $\mcN \to \Aut_\mcO(C_\mcO/M C_\mcO)$ which
is finite index in $\mcN$. For a subset $U \subseteq W$, let $H_{\mcO}[U] = H_B[U] \cap C_\mcO$.

Since $H_\mcO$ is finitely generated, there exists a sufficiently large integer $M > 0$ such that 
$T_{v, z} \in \Imrho \cap \mcN$ for all $z \in M\cdot  H_{\mcO}[U(v)-[v]]$ by Claim
\ref{claim:transvbyanycycle}. 
Let $\psi$ be an element of the $M\mcO$-congruence subgroup of $\mcN$.
Using the definition of $\tilde{\mc G}$ and $\mcN$, we see, for all $v \in W$, that
$\psi(\hat v_B) - \hat v_B \in M \cdot H_{\mcO}[U(v)-[v]]$. (Moreover, any such automorphism lies in $\mc N$ by definition
of $\mc N$ and $\tilde{ \mc G}$.)
Order the vertices of $W$ as $v_1, v_2, \dots v_n$ such that 
$i \leq j$ implies $v_i \succeq v_j$ or $v_i, v_j$ are incomparable. Let 
$$z_i = \psi(\hat v_{iB}) - \hat v_{iB} \in M \cdot  H_{\mcO}[U(v)-[v]].$$ Then
$\psi = T_{v_n, z_n} \circ T_{v_{n-1}, z_{n-1}} \circ \dots \circ T_{v_1, z_1} \in \Imrho$. 

\subsubsection{Stage 3: The (mostly) SL part}
We want to show that $\pi(\Imrho \cap \tilde{\mc G})$  has finite index in 
$\mc S = \pi(\tilde{\mc G})$. Recall that $\pi = \prod_{W'} \pi_{W'}$. It will suffice to show for each equivalence
class $W'$, there is a finite index subgroup of $\pi_{W'}(\tilde{\mc G})$ generated by elements
of $\Imrho \cap \tilde{\mc G}$ which act trivially on $C_B[W - W']$. 
If $|W'| = 1$, then by definition, $\pi_{W'}( \tilde{\mc G})$ is trivial. 
For $|W'| > 1$, we break the argument into 
two cases: $W'$ maximal and $W'$ nonmaximal. We let $\mcS(W') = \pi_{W'}(\tilde{\mc G})$.\\

	\noindent \textbf{Case 1:} $W'$ is maximal and $|W'| > 1$. \\
	This case requires a bit of detail since, as we will see, $\mcS(W') \cong \SL_m(\mcO)^{\text{op}} \ltimes \mcO^{m}$.
	We will use Proposition \ref{prop:transvgen} to get the $\SL$ factor and then additional unipotents to obtain the $\mcO^m$ factor.
	Since $U(W') -W' = \emptyset$, the representation $\pi_{W'}$ is just restriction to $C_B[W']$. Consequently, when discussing automorphisms
	of $C_B$ fixing $C_B[W - W']$, we may drop the notation $\pi_{W'}$.
	
	We choose a new basis for $C_B[W']$. By condition 1 of $\ddagger$, there exists $\mu \in W'$ such that $b_\mu$ is invertible. Let
	$e_\mu = \hat \mu_B$, and let $e_w = e_{w, \mu}$ for $w \in W' - \mu$. Then, 
	$\{e_w \mid w \in W'\}$ is a new basis for $C_B[W']$.
	Since it is only a change of finite index, we may assume that $\mcS(W')$ is the 
	group preserving the $\mcO$-span of this new basis.
	
	Let's describe an arbitrary $\psi \in \mcS(W')$. 
	Note that $e_u \in H_B$ for all $u \in W' - \mu$.
	Moreover, $\{e_u \mid u \in W'-\mu\}$ is a basis for $H_B[W']$.
	Thus, from the definitions, $\psi$ acts on $\mu, w \in W'$ as follows
	$$\psi(e_\mu) = e_\mu + \sum_{u \in W'-\mu} \beta_{u, \mu} e_u$$ 
	$$\psi(e_w) = \sum_{u \in W' - \mu} \beta_{u, w} e_u$$
	for some coefficients $\beta_{u, w} \in \mcO$. Moreover, since $H_B[W']$ is invariant and $\psi$ acts trivially
	on $C_B[W']/ H_B[W']$, the reduced norm of $\psi$ restricted to $H_B[W']$ must be $1$ by Lemma \ref{lemma:nrdonparabolic}.
	In view of the definition of $\tilde{ \mc G}$, any $\psi$ satisfying the above is in $\mcS(W')$.
	Let $\mathfrak H < \mcS(W')$ be the subgroup of automorphisms fixing $e_\mu$. Let 
	$\mathfrak N < \mcS(W')$ be the subgroup of automorphisms fixing $e_u$ for all $u \neq \mu$.
	The above establishes $\mcS(W') = \mathfrak N \cdot \mathfrak H $, and so it suffices to show
	$\Imrho$ contains finite index subgroups of $\mathfrak H$ and $\mathfrak N$.
	
	We start with $\mathfrak H$. By Claim \ref{claim:transvbyanycycle}, for all $v, u \in W'-\mu$ and
	$b \in M \mcO$ for $M \in \Z$
	sufficiently large, $\Imrho$ contains $T_{v, be_u}$. In our new basis, for $v, u \in W'-\mu$ one computes
	$$T_{v, be_u}(e_v) = e_v + be_u$$ $$T_{v, be_u}(e_w) = e_w \text{ for all } w \neq v.$$ 
	These are precisely the automorphisms required by Proposition \ref{prop:transvgen}
	where we view $\{e_u \mid u \in W'-\mu\}$ as the $e_i$. Thus, $\Imrho$ contains some finite index subgroup
	of $\mathfrak H$.
	
	Now, we consider $\mathfrak N$. By conditions $\ddagger$, there is some $v \in W'-\mu$ such that $b_v$ is invertible. Note that
	$e_{u, v} = e_u - b_u b_v^{-1} e_v$ for $u \in W'-\{\mu, v\}$. 
	Claim \ref{claim:transvbyanycycle} implies that $T_{\mu, b e_{u, v}} \in \Imrho$ 
	for all $u \in W' - \{\mu, v\}$ and for all $b \in M \mcO$ with $M$ sufficiently large.
	We compute
	$$\begin{array}{rcl} T_{\mu, be_{u, v}}(e_\mu) &=& e_\mu + b e_{u, v}\\
	  T_{\mu, be_{u, v}}(e_w) &=& e_w - b_w b_\mu^{-1} b e_{u, v} \text{ for all } w \in W'-\mu \end{array}$$
	By taking $M$ even larger, we can additionally ensure $T_{\mu, be_{u, v}} \in \mcS(W')$.
	For $z \in H[W']$,
	let $\phi_{z} \in \mcS(W')$ denote the automorphism defined by 
	$$  \phi_z(e_\mu) = e_\mu + z$$
	$$  \phi_z(e_w) = e_w \text{ if } w \neq \mu. $$
	Note that the restriction of $T_{\mu, be_{u, v}}$ to $H_B[W']$ must agree with the restriction
	of an element of $\mathfrak H$. Consequently, for $M$ sufficiently large
	(depending on the index of $\mathfrak H \cap \Imrho$ in $\mathfrak H$), we can also ensure
	there is some element of $\mathfrak H \cap \Imrho$ whose action on $H_B[W']$
	agrees with $T_{\mu, be_{u, v}}$. Precomposing by the inverse of that element, we find that 
	$\phi_{b e_{u, v}} \in \Imrho$ for $b \in M \mcO$ and $u \in W' - \{\mu, v\}$. 
	
	Now, for $u \in W' - \{\mu, v\}$, a computation yields 
	$[T_{u, b'e_v}, \phi_{b e_{u, v}}] = \phi_{bb' e_v}$, and 
	$[T_{v, b'e_w}, \phi_{b e_v}] = \phi_{bb' e_w}$ for all $w \in W' - \{\mu, v\}$.
	For some sufficiently large $M$ then,
	$\phi_{b e_w} \in \Imrho$ for all $b \in M \mcO$ and all $w \in W' - \{\mu\}$. I.e. $\Imrho$
	contains a finite index subgroup of $\mathfrak N$. \\
	
	\noindent \textbf{Case 2:} $W'$ is nonmaximal and $|W'| > 1$.\\ 
	By Lemma \ref{lemma:rednrmonsubquot}, $\pi_{W'}(\psi)$ has reduced norm $1$ for all $\psi \in \tilde{\mc G}$.
	Clearly, $\{\hat v_B \mid v \in W'\}$ projects to a basis for $C_B[U(W')]/C_B[U(W')-W']$, and its $\mcO$-span
	is preserved by $\pi_{W'}(\tilde{\mc G})$. These turn out to be the only restrictions
	on $\pi_{W'}(\tilde{\mc G})$ although it is not immediately clear from the definition of $\tilde{\mc G}$.
	In this case, we will prove that the group of such elements up to finite index is contained in
	$\pi_{W'}(\Imrho \cap \tilde{\mc G})$.
	 Let $e_v$ denote the projection of $\hat v_B$ to $C_B[U(W')]/C_B[U(W')-W']$.
	
	Let $\mu \in U(W')-W'$ be a maximal element such that $b_{\mu}$ is invertible. Since $\mcO$ is finitely generated
	and $W'$ is finite, there is a uniform integer $M > 0$ such that $T_{v, z} \in  \Imrho \cap \tilde{\mc G}$
	for all $z = b e_{u, \mu}$ for all distinct $v, u \in W'$ and $b \in M \mcO$. 
	For such $T_{v, z}$, the automorphism $\pi_{W'}(T_{v, z})$ 
	maps $e_w \mapsto e_w$ for $w \neq v$ and $e_v \mapsto e_v + b e_u$. These are precisely the elements required by
	Proposition \ref{prop:transvgen}, and thus $\pi_{W'}(\Imrho \cap \tilde{\mc G})$
	is commensurable to $\pi_{W'}(\tilde{\mc G})$. \\
	
	Note that in each of the above cases, we only appealed to elements of $\Imrho$ which acted trivially
	on $W - W'$. Consequently, we have proved that $\pi(\Imrho \cap \tilde{\mc G})$
	and $\pi(\tilde{\mc G})$ are commensurable. This establishes the theorem.

\subsection{Proof of the remaining main technical theorems}

\begin{proof}[Proof of Corollary \ref{cor:maintechcorlowerbound}]
	Apply Theorem \ref{thm:maintechthmlowerbound}.
\end{proof}

\begin{proof}[Proof of Corollary \ref{cor:reloutquots}]
	We want to use the representation $\rho_B$ to induce a homomorphism on some finite
	index subgroup of $\Out(F[W], \mc U)$. Since $\Aut(F[W], \mc U; R)$ has finite index image
	in $\Out(F[W], \mc U)$, it suffices to find some finite index subgroup $\Delta < \Aut(F[W], \mc U; R)$
	where $\Delta \cap \Inn(F[W])$ lies in the kernel of $\rho_B$. Then, 
	Corollaries \ref{cor:maintechcorupperbound} and \ref{cor:maintechcorlowerbound} finish the proof.
	
	Recall that $\rho_B \colon \Aut(F[W], \mc U; R) \to \Aut_B(H_B)$ is a restriction of 
	$\nu \colon  \Aut(F[W], \mc U; R) \to \Aut(\HH_1(R))$ and that $\nu(\varphi)$ is just the
	homomorphism induced by $\varphi|_R$. Let $I = \Inn(F[W]) \cap \Aut(F[W], \mc U; R)$.
	Since $R$ has finite index in $F[W]$, every inner automorphism of $F[W]$ has a power in
	$\Inn(R)$, but $\Inn(R)$ acts trivially on $\HH_1(R) = R/[R, R]$. I.e. $\rho_B(I)$
	is a torsion group. By Selberg's Lemma, $\Aut(\HH_1(R)) \cong \GL_t(\Z)$ contains a 
	finite index torsion-free subgroup. Then the preimage $\Delta$ of this subgroup in $\Aut(F[W], \mc U; R)$
	has the required property.
\end{proof}

\section{Some finite groups and their representations} \label{section:fingrps}
To finish the proof of Theorem \ref{thm:mainthm}, we need to find appropriate finite groups $G$ and representations $F[W] \to G$,
factoring through a free group quotient,
to plug in to our main technical corollaries (\ref{cor:maintechcorupperbound} and \ref{cor:maintechcorlowerbound}). 
Conditions 1 and 2 of $\ddagger$ are slightly complicated; we will instead
find a group $G$ and simple factor $B$ such that $b_v$ is invertible for all $v \in W$ and any pair $u, v \in W$
generates $G$. While that may seem a difficult condition to satisfy at first, we will show it is possible for the symmetric group,
alternating group
and finite Heisenberg group. Some indication that this is reasonable for the symmetric and alternating group is Dixon's 
Theorem \cite{Dixon} which states that, with probability approaching $1$ as $m \to \infty$,
two random elements of the symmetric group on $m$ letters generate either the symmetric group or its alternating
subgroup. Let $\Sym(m)$ denote the symmetric group on $m$ letters and $\Alt(m)$ its
alternating subgroup.

\begin{lemma} \label{lemma:gensym} Let $k$ be a positive integer.
	If $m \geq 6k+3$ and $m$ is even (resp. odd), then there are $m$-cycles $g_1, \dots, g_k \in \Sym(m)$ such that $g_i, g_j$ generate
	$\Sym(m)$ (resp. $\Alt(m)$) for all $i \neq j$.
\end{lemma}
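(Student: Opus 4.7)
The plan is to give an explicit construction and verify pairwise generation. Set $g_1 = (1, 2, \ldots, m)$, and for each $i \in \{2, \ldots, k\}$ let $\tau_i = (3i-2, 3i-1, 3i)$ and define $g_i = g_1 \tau_i$. A direct check shows that each $g_i$ is again an $m$-cycle: it differs from $g_1$ only by swapping the positions of the elements $3i-1$ and $3i$ in the cyclic sequence. The supports of the $\tau_i$ are pairwise disjoint since $3k \leq m$, which is implied comfortably by $m \geq 6k+3$.

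For a pair $(g_1, g_i)$ with $i \geq 2$, note that $g_1^{-1} g_i = \tau_i$, so $\langle g_1, g_i\rangle = \langle g_1, \tau_i\rangle$. Conjugating $\tau_i$ by successive powers of $g_1$ produces every cyclically consecutive $3$-cycle $(a, a+1, a+2)$ on $\{1, \ldots, m\}$, and these generate $\Alt(m)$. A sign check (an $m$-cycle is even iff $m$ is odd) then gives $\langle g_1, g_i\rangle = \Alt(m)$ when $m$ is odd and $\Sym(m)$ when $m$ is even.

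For a pair $(g_i, g_j)$ with $2 \leq i < j$, I would first argue that $\langle g_i, g_j\rangle$ is primitive. Since it contains the $m$-cycle $g_i$, any block system it preserves must be a block system of the cyclic group $\langle g_i\rangle$ and so corresponds to some proper divisor $d$ of $m$. The block system of $\langle g_i\rangle$ attached to $d$ is obtained from that of $\langle g_1\rangle$ by interchanging the elements $3i-1$ and $3i$ between their respective blocks, and analogously for $g_j$ at positions $3j-1$ and $3j$. For $i \neq j$ these two modifications of $g_1$'s partition disagree, so no common nontrivial block system survives. Then $g_j^{-1} g_i = \tau_j^{-1}\tau_i$ is a product of two disjoint $3$-cycles, hence fixes $m - 6$ points. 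For $k \geq 3$, the hypothesis $m \geq 6k+3 \geq 21$ gives $m - 6 > m/2$, so by a classical theorem of Jordan — a primitive subgroup of $\Sym(m)$ containing a non-identity element fixing more than half the points must contain $\Alt(m)$ — we obtain $\Alt(m) \subseteq \langle g_i, g_j\rangle$, and the same parity check concludes. The cases $k \leq 2$ reduce to pairs of the first type.

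The main obstacle is the primitivity step for $(g_i, g_j)$: the Jordan application is immediate once primitivity is established, but ruling out a common block system requires the explicit combinatorial analysis of how the perturbations $\tau_i$ alter the block systems of the cyclic group generated by an $m$-cycle. The bound $m \geq 6k+3$ is more than enough room for the disjointness of supports together with the $m \geq 12$ margin needed to invoke Jordan's theorem.
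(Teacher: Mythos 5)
The key step in your proof of pairwise generation for $(g_i,g_j)$ with $i,j\geq 2$ is the invocation of ``a classical theorem of Jordan — a primitive subgroup of $\Sym(m)$ containing a non-identity element fixing more than half the points must contain $\Alt(m)$.'' This statement is false. For example, take $\Sym(7)$ acting on the $\binom{7}{2}=21$ unordered pairs from $\{1,\dots,7\}$. This action is primitive (the point stabilizer $\Sym(2)\times\Sym(5)$ is maximal), but the transposition $(1\,2)$ fixes $\binom{5}{2}+1=11$ of the $21$ points — more than half — while the image of $\Sym(7)$ in $\Sym(21)$ certainly does not contain $\Alt(21)$. More generally, for $r\geq 7$ the action of $\Sym(r)$ on $\binom{r}{2}$ pairs produces primitive groups with elements fixing a proportion of points approaching $1$. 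The correct Jordan/Bochert/Manning-type theorems for minimal degree in primitive groups impose much stronger constraints (bounds on the order of $\sqrt{n}$, or $n/3$ or $n/4$ via CFSG, not $n/2$), and none of the genuinely classical versions applies directly to an element that is a product of two disjoint $3$-cycles at the degree $m\geq 6k+3$ you are allotting. So the $(g_i,g_j)$ case is not established by this route.

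Your argument for the pairs $(g_1,g_i)$ is fine and quite clean: $g_1^{-1}g_i=\tau_i$ is a $3$-cycle, and $g_1$-conjugates of it yield all cyclically consecutive $3$-cycles, which generate $\Alt(m)$; the parity remark then finishes. Your primitivity sketch for $(g_i,g_j)$ is morally correct (a common block system would have to be a block system of each full cycle $\langle g_i\rangle$, and the local perturbations disagree), although as written it is somewhat hand-wavy and there are divisor-dependent edge cases — e.g.\ whether the $B^{(i)}$-block and $B^{(j)}$-block of the element $3i$ might still coincide for special $d$, $i$, $j$ — that deserve checking. By contrast, the paper's proof sidesteps both the primitivity argument and the need for any Jordan-type theorem: it uses transpositions $\tau_i=(3i-2,\,3i-1)$ and sets $g_i=\tau_i\sigma\tau_i$, and then for each pair it manufactures a specific $3$-cycle by an explicit chain of conjugations and a power, finally reducing to the standard generating pair $\sigma,\ (1\,2\,3)$. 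That explicit computation is what justifies the precise hypothesis $m\geq 6k+3$ (the indices appearing in the intermediate permutations are bounded by roughly $6k$). To repair your proof you would either need to perform an analogous explicit commutator calculation for your version of the $g_i$, or invoke a correct minimal-degree theorem (Bochert, or Liebeck--Saxl via CFSG), and in the latter case check that the resulting lower bound on $m$ is compatible with $m\geq 6k+3$, which at present it is not.
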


\begin{remark} Our goal is simply to show that there are finite groups $G$ with the desired properties
	and not produce an exhaustive list. The above lemma is possibly far from optimal.
\end{remark}

\begin{proof}
	We will use cycle notation, but consider permutations as bijections on $\{1, \dots, m\}$ so that composition is composition
	of functions (from right to left). It is easy to check that $\Sym(m)$ (resp. $\Alt(m)$) is generated by 
	$\sigma = \begin{array}{ccccc}(1 & 2 & 3 & \dots & m)\end{array}$ and 
	$\begin{array}{ccc}(1 & 2 & 3)\end{array}$ when $m$ is even (resp. odd). We let 
	$\tau_i = \begin{array}{cc}(3i\!-\!2 & 3i\!-\!1)\end{array}$, let
	$g_1 = \sigma$ and let $g_i = \tau_i \sigma \tau_i$ for $1 < i \leq k$. We claim that $g_i, g_j$ generate
	for any $i \neq j$.
	
	First, consider $i = 1 \neq j$. Then,
	$$g_j g_1^{-1} = \tau_j \sigma \tau_j \sigma^{-1} = 
	 \begin{array}{ccc}(3j\!-\!2 & 3j\!-\!1 & 3j).\end{array}$$
	This $3$-cycle is conjugate to $\begin{array}{ccc}(1 & 2 & 3) \end{array}$ under a power of $g_1$.
	
	Now, suppose $i \neq 1 \neq j$ and $i < j$. Then,
	$$g_j g_i^{-1}  = \tau_j \sigma \tau_j \tau_i \sigma^{-1} \tau_i  = \tau_j (\sigma \tau_j \tau_i \sigma^{-1} \tau_i \tau_j) \tau_j.$$
	It suffices to show $\tau_j g_j \tau_j = \sigma$ and $\tau_j g_j g_i^{-1} \tau_j = \sigma \tau_j \tau_i \sigma^{-1} \tau_i \tau_j$ generate.
	We define and compute the following.
	$$\begin{array}{rcl} 
		\sigma_1 & =&  \sigma \tau_j \tau_i \sigma^{-1} \tau_i \tau_j = 
	 				\begin{array}{ccc}(3i\!-\!2 & 3i\!-\!1 & 3i) \end{array} \begin{array}{ccc}(3j\!-\!2 & 3j\!-\!1 & 3j)\end{array} \vspace{.2cm} \\
		 \sigma_2 & = & \sigma^{3j+2-3i} \sigma_1 \sigma^{-3j-2+3i} \vspace{.2cm} \\  
		 & =& \begin{array}{ccc}(3j & 3j\!+\!1 & 3j\!+\!2)\end{array} 
				\begin{array}{ccc}(6j\!-\!3i &  6j\!-\!3i\!+\!1 & 6j\!-\!3i\!+\!2)\end{array} \vspace{.2cm}\\ 
		\sigma_3 &= &(\sigma_1 \sigma_2)^6 = \begin{array}{ccccc}(3j\!-\!2 & 3j\!-\!1 & 3j & 3j+1 & 3j+2)\end{array} \vspace{.2cm}\\ 
		\sigma_4 &=& \sigma^{-3j+2} \sigma_3 \sigma^{3j-2} =
				\begin{array}{ccccc}(1 & 2 & 3 & 4 & 5)\end{array} \vspace{.2cm}\\ 
		\sigma_5 &=& \sigma^{-3} \sigma_4^4 \sigma \sigma_4^{-1} \sigma^{-1} \sigma_4^{-3} \sigma^3  
			= \begin{array}{ccc}(1 & 2& 3)\end{array} \end{array}$$
	Since $\sigma$ and $\begin{array}{ccc}(1 & 2& 3)\end{array} $ generate, we are done.
\end{proof}

\begin{lemma} \label{lemma:symstdrep}
		Let $q\colon \Sym(m) \to \GL_{m-1}(\Q)$ be the standard representation of the symmetric group.
		Then $q(\sigma) - I$ is invertible for all $m$-cycles $\sigma \in \Sym(m)$.
\end{lemma}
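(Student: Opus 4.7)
The plan is to compute the characteristic polynomial of $q(\sigma)$ directly, using the fact that the standard representation sits inside the permutation representation. Let $P \colon \Sym(m) \to \GL_m(\Q)$ be the permutation representation on $\Q^m = \Q e_1 \oplus \cdots \oplus \Q e_m$ where $\pi \cdot e_i = e_{\pi(i)}$. The line spanned by $e_1 + \cdots + e_m$ is $\Sym(m)$-invariant, and the standard representation $q$ is isomorphic to the quotient $\Q^m / \langle e_1+\cdots+e_m\rangle$ (equivalently, the complementary invariant subspace $\{\sum a_i e_i : \sum a_i = 0\}$).

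First I would observe that for an $m$-cycle $\sigma$, the matrix $P(\sigma)$ is a single cyclic permutation matrix, whose characteristic polynomial is $x^m - 1$. Since the trivial subrepresentation contributes the eigenvalue $1$, the characteristic polynomial of $q(\sigma)$ is
\[
\chi_{q(\sigma)}(x) = \frac{x^m - 1}{x-1} = 1 + x + x^2 + \cdots + x^{m-1}.
\]
Hence the eigenvalues of $q(\sigma)$ (over $\overline{\Q}$) are precisely the nontrivial $m$-th roots of unity.

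It follows immediately that $q(\sigma) - I$ has eigenvalues $\zeta - 1$ where $\zeta$ runs over the nontrivial $m$-th roots of unity, and none of these are zero. Equivalently,
\[
\det\bigl(q(\sigma) - I\bigr) = \prod_{\zeta^m = 1,\ \zeta \neq 1} (\zeta - 1) = \pm\, m \neq 0,
\]
by evaluating $\prod_{\zeta \neq 1}(x-\zeta) = 1 + x + \cdots + x^{m-1}$ at $x = 1$. Therefore $q(\sigma) - I$ is invertible. There is no real obstacle here; the only point requiring any care is to confirm that the standard representation is indeed the quotient by the trivial line, which is a standard fact about $\Sym(m)$.
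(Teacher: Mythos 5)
Your proof is correct and takes essentially the same approach as the paper: both realize the standard representation inside the permutation representation of $\Sym(m)$ on $\Q^m$ and exploit the cyclic structure of $q(\sigma)$. The paper writes the action in the basis $v_i = e_i - e_{i+1}$ and leaves $\det(q(\sigma) - I) \neq 0$ as a "straightforward computation," whereas you make that step explicit by factoring $x^m - 1 = (x-1)\chi_{q(\sigma)}(x)$ and evaluating at $x = 1$ to get $\det(q(\sigma) - I) = \pm m$, which is a cleaner way to close the argument.
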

\begin{proof}
	Since all $m$-cycles are conjugate, it suffices to prove this for the $m$-cycle 
	$\begin{array}{ccccc}(1 & 2 & 3 & \dots& m)\end{array}$. Recall that the standard
	representation can be defined as follows. $\Sym(m)$ acts on $\Q^m$ in the obvious way by permuting the standard basis
	vectors $e_1, \dots, e_m$. The standard representation is the subrepresentation $V \subset \Q^m$ which is the span of
	$v_i = e_i - e_{i+1}$ for $1 \leq i < m$. Then, $q(\sigma)$ maps $v_i \mapsto v_{i+1}$ for $i < m-1$ and maps
	$v_{m-1} \mapsto -v_{m-1} - \dots - v_1$. A straightforward computation then shows that the determinant of
	$q(\sigma) - I$ is nonzero.
\end{proof}

To complete the picture for the symmetric and alternating groups, we recall the following result. We provide
a proof for convenience.

\begin{prop} \label{prop:Qsymfactor}
	Let $m \geq 7$. Then, the maps $\Q[\Sym(m)] \to \Mat_{m-1}(\Q)$ and $\Q[\Alt(m)] \to \Mat_{m-1}(\Q)$ linearly extending the 
	standard representation $\Sym(m) \to \GL_{m-1}(\Q)$ are surjective.
\end{prop}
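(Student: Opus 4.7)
The plan is to apply the density theorem (Jacobson/Burnside): for a finite group $G$ and an absolutely irreducible representation $V$ of $G$ over a field $k$, the linear extension $k[G] \to \End_k(V)$ is surjective. Applied with $k = \Q$ and $V$ the standard $(m-1)$-dimensional representation (respectively its restriction to $\Alt(m)$), this would immediately give the two surjections claimed, as long as I verify absolute irreducibility over $\Q$ in each case.

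First I would handle $\Sym(m)$. The character of the standard representation is $\chi(\sigma) = |\mathrm{Fix}(\sigma)| - 1$, and the usual Burnside--Cauchy--Frobenius computation gives $\langle \chi, \chi \rangle = 1$, so $V$ is irreducible over $\C$. Because $V$ is visibly realized over $\Q$ (as the augmentation subrepresentation of the permutation module $\Q^m$), this $\C$-irreducibility upgrades to absolute irreducibility over $\Q$. Density then gives the surjection $\Q[\Sym(m)] \to \Mat_{m-1}(\Q)$.

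Next, for $\Alt(m)$, I would use the standard fact (from Specht module theory, or equivalently from the Clifford-theoretic criterion that $V|_{\Alt(m)}$ is irreducible iff $V \not\cong V \otimes \mathrm{sgn}$) that the restriction of the irreducible $\Sym(m)$-representation indexed by a partition $\lambda$ remains $\C$-irreducible precisely when $\lambda$ is not self-conjugate. Here $\lambda = (m-1,1)$ has conjugate $(2, 1^{m-2})$, and these partitions differ for $m \geq 4$, so $V|_{\Alt(m)}$ is $\C$-irreducible. (A direct computation of $\langle \chi|_{\Alt(m)}, \chi|_{\Alt(m)} \rangle_{\Alt(m)} = 1$ gives the same conclusion without invoking Specht modules.) The restriction is still defined over $\Q$, so it is absolutely irreducible over $\Q$, and density yields $\Q[\Alt(m)] \to \Mat_{m-1}(\Q)$ surjective.

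The only conceptual wrinkle is the passage from $\C$-irreducibility to absolute irreducibility over $\Q$, i.e.\ ensuring Schur index $1$; but since each representation is exhibited by explicit rational matrices, no such Schur obstruction can arise, so this step is essentially automatic. The hypothesis $m \geq 7$ in the proposition is far from tight for this statement alone — the argument works for every $m \geq 4$ — but it is presumably the range demanded by the rest of the paper.
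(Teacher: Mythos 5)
Your proof is correct and follows essentially the same route as the paper's: establish $\C$-irreducibility of the standard representation (restricted to $\Alt(m)$), conclude that the image spans $\Mat_{m-1}(\C)$ by density/Artin--Wedderburn, and descend to $\Q$ using the fact that the representation is realized by rational matrices. The only differences are cosmetic: the paper reduces immediately to $\Alt(m)$ (since $\Q[\Alt(m)] \subseteq \Q[\Sym(m)]$) and simply asserts irreducibility of the restriction, while you handle both groups and supply the non-self-conjugate-partition (or inner-product) justification that the restriction to $\Alt(m)$ stays irreducible — a welcome bit of extra care, but not a different method.
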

\begin{proof}
	It suffices to prove this for $\Alt(m)$. By tensoring with $\C$, we obtain $\C[\Alt(m)] \to \Mat_{m-1}(\C)$
	which is the linear extension of the standard representation with coefficiencts in $\C$
	and is also irreducible over $\C$. Since 
	the simple factors of $\C[\Alt(m)]$ correspond to irreducible complex representations and the only
	finite-dimensional simple $\C$-algebras are $\Mat_k(\C)$ for some $k$, the representation
	$\C[\Alt(m)] \to \Mat_{m-1}(\C)$ is surjective. I.e. the image of $\Alt(m)$ spans $\Mat_{m-1}(\C)$, but
	then it must also span $\Mat_{m-1}(\Q)$.
\end{proof}

We now analyze the Heisenberg group, its generating sets and some of its irreducible representations. We denote by $H(k)$
the mod $k$ Heisenberg group: 
$$ H(k) = \{ C \in \Mat_3(\Z/k\Z) \;\; | \;\; C \text{ is upper unitriangular}\}$$
We let
$$ X = \left( \begin{array}{rrr} 1 & 1 & 0 \\ 0 & 1 & 0 \\ 0 & 0 & 1\end{array} \right) \;\;\;
 Y = \left( \begin{array}{rrr} 1 & 0 & 0 \\ 0 & 1 & 1 \\ 0 & 0 & 1\end{array} \right) \;\;\;
Z = \left( \begin{array}{rrr} 1 & 0 & 1 \\ 0 & 1 & 0 \\ 0 & 0 & 1\end{array} \right)$$

\begin{lemma} \label{lemma:Heisgens}
	 Let $p$ be the smallest prime factor of $k$. Then,
	the set $S = \{ X, Y X^s \;\; | \;\; 0 \leq s \leq p-1 \} $ is a set of maximal size where 
	all pairs of distinct elements $g_1, g_2 \in S$ generate $H(k)$.
\end{lemma}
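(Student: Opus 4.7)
The plan is to reduce the pairwise-generation question to linear algebra over $\Z/k\Z$, exploiting that $Z$ generates the center of $H(k)$ and that the quotient $H(k)/\langle Z \rangle$ is canonically isomorphic to $(\Z/k\Z)^2$ via the images of $X$ and $Y$. First I would verify by direct matrix computation the commutation relation $XY = YXZ$, from which one obtains inductively the identity $Y^b X^a = X^a Y^b Z^{-ab}$ and hence a normal form $X^a Y^b Z^c$ for elements of $H(k)$. A short bookkeeping calculation then yields the commutator formula
$$[X^{a_1} Y^{b_1} Z^{c_1},\ X^{a_2} Y^{b_2} Z^{c_2}] = Z^{a_1 b_2 - a_2 b_1},$$
so the commutator of any two elements depends only on their images in $(\Z/k\Z)^2$ and equals $Z^{\det M}$, where $M$ is the $2 \times 2$ matrix whose columns are those projections.

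From this I would extract the key criterion: $g_1, g_2 \in H(k)$ generate $H(k)$ if and only if $\det M$ is a unit in $\Z/k\Z$. In one direction, if the determinant is a unit, then the commutator already generates $\langle Z \rangle$ and the projections span $(\Z/k\Z)^2$, so $\langle g_1, g_2\rangle$ surjects onto both $H(k)/\langle Z \rangle$ and $\langle Z \rangle$ and hence equals $H(k)$; the converse is immediate from the abelianization. With this criterion in hand, pairwise generation for $S$ becomes a one-line check: $X$ and $YX^s$ project to $(1,0)$ and $(s,1)$ with determinant $1$, while $YX^s$ and $YX^t$ with $s \neq t$ project to $(s,1)$ and $(t,1)$ with determinant $s - t$, a nonzero integer of absolute value less than $p$ and therefore coprime to $k$.

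For maximality I would project further to $(\Z/p\Z)^2$. If $T \subseteq H(k)$ is any subset with the pairwise-generation property, then each $t \in T$ must have nonzero image in $(\Z/p\Z)^2$, since otherwise every projection determinant involving $t$ is divisible by $p$ and so fails to be a unit modulo $k$. Similarly, distinct elements of $T$ must project to non-proportional vectors, since proportionality modulo $p$ forces the determinant to vanish modulo $p$. Hence the images of $T$ determine distinct lines through the origin in $(\Z/p\Z)^2$, and there are exactly $p+1$ such lines, giving $|T| \leq p + 1 = |S|$. The main obstacle is essentially just notational, threading the two layers of reduction cleanly — first to $(\Z/k\Z)^2$ to obtain the exact generation criterion, then mod $p$ for the counting step — but once the commutator formula is established each step is short.
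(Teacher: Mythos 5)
Your proof is correct and follows essentially the same route as the paper: both reduce the generation question for a pair to whether the images span $H(k)/\langle Z\rangle \cong (\Z/k\Z)^2$, using the commutator formula $[g_1,g_2]=Z^{\det M}$ to show that spanning the quotient already forces the center into the generated subgroup. Your maximality step is a slight streamlining — you project once to $(\Z/p\Z)^2$ and count lines, whereas the paper records the equivalent criterion over each $(\Z/p_i\Z)^2$ before taking the minimum — but this is a cosmetic difference, not a different method.
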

\begin{proof}
	Note that the center $Z(H(k))$ is generated by $Z$.
	We first claim that $g_1 = Y^{s_1} X^{s_2} Z^{s_3}$ and $g_2 = Y^{t_1} X^{t_2} Z^{t_3}$ generate $H(k)$ if and only 
	if they generate $H(k)/Z(H(k)) \cong (\Z/k\Z)^2$. Clearly, it's necessary. If $g_1, g_2$ generate 
	$H(k)/Z(H(k))$, then since $X, Y$ are generators of $H(k)/Z(H(k))$, it must be the case that 
	$s_1 t_2 - t_1 s_2$ is a unit in $\Z/k\Z$. By computation, $[g_1, g_2] = Z^{s_1 t_2 - t_1 s_2}$,
	and so $Z(H(k))$ is in the group generated by $g_1, g_2$.
	
	If $p = p_1, \dots, p_t$ are the prime factors of $k$, then $g_1, g_2$ generate $H(k)/Z(H(k)) \cong (\Z/k\Z)^2$
	if and only if they generate after passing to the quotient $(\Z/k\Z)^2 \to (\Z/p_i\Z)^2 $ for all $i$.
	The maximal size of a set of vectors, each pair of which generates $(\Z/p_i\Z)^2$ is $p_i + 1$.
	Since $p$ is the smallest prime, the largest subset of $H(k)$, each pair of which generates $H(k)$,
	has size at most $p+1$. The set $S$ has size $p+1$ and each pair in it generates $H(k)$ by the above discussion.
\end{proof}

\begin{lemma} \label{lemma:Heisreps}
	Let $S= \{ X, Y X^s \;\; | \;\; 0 \leq s \leq p-2 \} \subset H(k)$. Let $1 < m < k$ be a factor of $k$ which is coprime
	to $\ell = \frac{k}{m}$, and let $\zeta$ be a primitive $k$th root of unity. 
	There exists a factor $B \cong \Mat_{m}(\Q(\zeta))$ of the group
	ring $\Q[H(k)]$ such that for every $g \in S$ the element $b - 1_B \in B$ is invertible where $b$ is the
	projection of $g$ to $B$.
\end{lemma}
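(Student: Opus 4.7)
The plan is to realize $B$ as the image of a carefully chosen absolutely irreducible representation of $H(k)$ over $\Q(\zeta)$. Since $\gcd(m, \ell) = 1$, the Chinese Remainder Theorem provides an isomorphism $H(k) \cong H(m) \times H(\ell)$ sending each matrix generator to the pair of its reductions modulo $m$ and modulo $\ell$. Let $\omega = \zeta^\ell$ and $\eta = \zeta^m$, primitive $m$th and $\ell$th roots of unity respectively. Let $\pi\colon H(m) \to \GL_m(\Q(\omega))$ be the Schr\"odinger representation, with $\pi(X)$ the cyclic shift on a basis $e_0, \dots, e_{m-1}$ and $\pi(Y)$ the diagonal operator with entries $\omega^0, \dots, \omega^{m-1}$. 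Let $\chi\colon H(\ell) \to \Q(\eta)^\times$ be the $1$-dimensional character factoring through $(\Z/\ell)^2$ determined by $\chi(X) = \eta$ and $\chi(Y) = \beta$ for a value $\beta \in \mu_\ell$ to be chosen. Set $\rho := \pi \otimes \chi\colon H(k) \to \GL_m(\Q(\zeta))$.

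To identify $B$, note that the $m^2$ matrices $\pi(X)^i \pi(Y)^j$ for $0 \leq i, j < m$ are $\Q$-linearly independent (a Vandermonde argument on the diagonal entries $\omega^{ja}$ of $\pi(Y)^j$ separates $j$ for each fixed $i$, while different $i$ yield monomial matrices with disjoint supports) and so span $\Mat_m(\Q(\omega))$ over $\Q$. Adjoining the scalar $\chi(X) = \eta$ fills out $\Mat_m(\Q(\omega) \otimes_\Q \Q(\eta)) = \Mat_m(\Q(\zeta))$, using $\gcd(m, \ell) = 1$. Hence the induced $\Q$-algebra homomorphism $\Q[H(k)] \to \Mat_m(\Q(\zeta))$ is surjective, and by semisimplicity $\Mat_m(\Q(\zeta))$ appears as a simple factor $B$ of $\Q[H(k)]$, with the projection of any $g \in H(k)$ to $B$ equal to $\rho(g)$.

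The required invertibility then becomes $\rho(g) - I$ invertible for $g \in S$. For $g = X$, $\rho(X) = \eta\,\pi(X)$ has eigenvalues $\eta \omega^j$; any equation $\eta\omega^j = 1$ would place $\eta$ in $\mu_m \cap \mu_\ell = \{1\}$, contradicting that $\eta$ is a primitive $\ell$th root. For $g = YX^s$, a short induction on basis vectors gives $(\pi(YX^s))^m = \pm I$, so the eigenvalues lie in $\mu_m$ when $m$ is odd or $s$ is even, and in $\mu_{2m} \setminus \mu_m$ in the remaining case ($m$ even, $s$ odd). In that remaining case $\ell$ must be odd, so $\gcd(2m, \ell) = 1$ and no eigenvalue of $\pi(YX^s)$ can equal the $\ell$th root of unity $(\beta\eta^s)^{-1}$; invertibility is automatic. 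In the former case, $\rho(YX^s) - I$ is singular only when $\beta = \eta^{-s}$.

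Thus $\beta \in \mu_\ell$ need only avoid the $p-1$ values $\{\eta^{-s} : 0 \leq s \leq p-2\}$, which is possible provided $\ell \geq p$. If $p \mid \ell$ this is immediate; otherwise $p \mid m$, and then coprimality of $m$ and $\ell$ forces every prime factor of $\ell$ to exceed $p$, so $\ell > p$ (using $\ell > 1$). The main delicate point is the eigenvalue analysis for $\pi(YX^s)$, in particular ruling out a coincidence in the parity case $m$ even and $s$ odd; once that is handled, existence of a valid $\beta$ follows immediately from the coprimality-based inequality $\ell \geq p$.
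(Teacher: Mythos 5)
Your proof is correct, but it takes a genuinely different route from the paper's. The paper writes down an explicit matrix representation $r\colon H(k)\to\GL_m(\Q(\zeta))$ (with $r(X)$ diagonal and $r(Y)$ a twisted cyclic shift), verifies the relations by hand, shows surjectivity of $\Q[H(k)]\to\Mat_m(\Q(\zeta))$ by analyzing the restriction to the abelian subgroup $\langle X,Z\rangle$, and then handles invertibility uniformly by computing the characteristic polynomial of $r(YX^s)$ to be $\lambda^m\pm\zeta^t$ with $t\equiv(s+1)m\pmod{\ell}$, which is nonzero mod $\ell$ because $s+1<p$ is coprime to $\ell$. You instead exploit the CRT splitting $H(k)\cong H(m)\times H(\ell)$, tensor the Schr\"odinger representation of $H(m)$ with a $1$-dimensional character of $H(\ell)$, and analyze eigenvalues; this is more conceptual and exposes the extra degree of freedom $\beta$, but at the cost of a parity case split (since $(\pi(YX^s))^m$ is $I$ or $-I$ depending on whether $m$ is even and $s$ odd). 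The paper's direct computation avoids any parity discussion and avoids needing to choose $\beta$; your version makes it clearer why such a representation exists and where it comes from.

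One small inaccuracy worth fixing: you assert that the $m^2$ matrices $\pi(X)^i\pi(Y)^j$ "span $\Mat_m(\Q(\omega))$ over $\Q$." A $\Q$-linear span of $m^2$ matrices cannot equal $\Mat_m(\Q(\omega))$, whose $\Q$-dimension is $m^2[\Q(\omega):\Q]$. What you need (and what is true) is that these matrices span $\Mat_m(\Q(\omega))$ over $\Q(\omega)$, and that the $\Q$-subalgebra $\pi(\Q[H(m)])$ also contains the $\Q(\omega)$-scalars because $\pi(Z)$ is a primitive $m$th root of unity times the identity; putting these together gives $\pi(\Q[H(m)])=\Mat_m(\Q(\omega))$, and then adjoining the $\Q(\eta)$-scalars from $\chi$ yields $\Mat_m(\Q(\zeta))$ as you conclude. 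This is a minor slip, not a gap, since the scalars you need are manifestly in the image. The remainder of the argument --- the eigenvalue analysis, the parity dichotomy, the bound $\ell\geq p$ from coprimality, and the existence of an admissible $\beta\in\mu_\ell$ avoiding the $p-1$ forbidden values --- is correct.
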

\begin{proof}
	We first use the following representation $r\colon H(k) \to \GL_{m}(\Q(\zeta))$ defined on generators by:
	$$ r(X) = \left(\begin{array}{rrrr} \zeta & & & \\ & \zeta^{\ell+1} & & \\ & & \ddots & \\ & & & \zeta^{(m-1)\ell+1}
				\end{array}\right) $$
	$$	r(Y) = 	\left(\begin{array}{rrrr}  & & & \zeta^m \\ 1 &  & & \\ & \ddots & & \\ & & 1 & 
							\end{array}\right) \hspace{1.7cm}
		r(Z) = \zeta^{\ell} I$$
	This gives a well-defined representation since $r(X), r(Y), r(Z)$ satisfy the following relations in a presentation
	of $H(k)$ : $X^k = I, Y^k = I, Z^k = I, XZ = ZX, YZ = ZY, XYX^{-1}Y^{-1} = Z$.
	We will show that the induced homomorphism $r\colon  \Q[H(k)] \to \Mat_{m}(\Q(\zeta))$ is surjective and hence the latter
	ring is a factor of $\Q[H(k)]$.
	
	Let $H' < H(k)$ be the subgroup generated by $X, Z$; this subgroup is isomorphic to $(\Z/k\Z)^2$.
	Consequently, $\Q[H']$ is isomorphic to a product of fields.
	Since $r$ maps $X, Z$ to diagonal matrices, the resriction of $r$
	to the subring $\Q[H']$ decomposes as a product of representations
	$r_i\colon  \Q[H'] \to \Q(\zeta)$ where $r_i(X) = \zeta^{\ell i + 1}$ and $r_i(Z) = \zeta^{\ell}$ for $0 \leq i \leq m-1$. 
	Each $r_i$ is surjective, so each $r_i$ is equivalent to projection onto a factor of $\Q[H']$. Moreover,
	each $r_i$ is projection to a different factor since the kernels are all distinct. Indeed for $i \neq j$,
	$$ r_i(X^{\ell} - Z^{\ell i+1}) = 0 \neq r_j(X^{-\ell} - Z^{\ell(i-1)+1}) $$
	Consequently, the product of representations $\prod_i r_i$ is surjective, or equivalently, $r(\Q[H'])$
	is the set of all diagonal matrices in $\Mat_{m}(\Q(\zeta))$. Every other matrix is a sum of the form
	$\sum_{i =0}^{m -1} d_i r(Y)^i$ where $d_i$ is a diagonal matrix. Thus, $r$ is surjective and
	$\Mat_{m}(\Q(\zeta))$ is a factor of $\Q[H(k)]$.
	
	Now suppose $g \in S$ and $b = r(g)$. The matrix $b - I$
	is invertible if and only if $b$ does not have $1$ as an eigenvalue. This is clear if $g = X$, so we assume
	$g = Y X^s$ for some $0 \leq s \leq p-2$. In this case,
	$$b = 	\left(\begin{array}{rrrrr}  & &  & & \zeta^{m + s(m-1)\ell  + s} \\ \zeta^s & & & & \\
	 		& \zeta^{s\ell+s} & & & \\ & & \ddots & & \\ & & & \zeta^{s(m-2)\ell + s} 
						\end{array}\right)$$
	The characteristic polynomial of $b$ is $\lambda^{m} \pm \zeta^t$ where
	$t = sm + s \ell {m\choose2} + m$. 
	Since $0 \leq s \leq p-2$, we have $s+1$ is coprime to $\ell$. Thus, $t \equiv (s+1)m \not\equiv 0$ (mod $\ell$), 
	and in particular $\zeta^t \neq 1$. Thus, $1$ cannot be an eigenvalue of $b$.
\end{proof}

\section{Proof of the Main Theorems} \label{section:proofofmainthms}

We are now ready to prove our main theorems.

\begin{proof}[Proof of Theorem \ref{thm:mainthm}]
	In view of Corollary \ref{cor:generaloutagamquots}, we need to do two things. First, we need to find
	$G, B, q$ that together with $\preceq$ satisfy $\ddagger$. Second, we need to show that the group $\mc G$
	as defined in Corollary \ref{cor:generaloutagamquots} is virtually isomorphic to the group $\mc G$
	as defined in Theorem \ref{thm:mainthm}.
	From condition 3 of $\dagger$, we see that the first part of condition 3 of $\ddagger$ is satisfied.
	
	First, consider the case when $m \geq 6|W| + 2$ and $k = 1$. I.e. $\mcO = \Mat_m(\Z)$.
	Let $G = \Sym(m+1)$ (resp. $G = \Alt(m+1)$) if $m+1$ is even (resp. odd). 
	We define $q\colon  F[W] \to G$ by mapping the basis $W$ to 
	the elements $g_1, \dots, g_{|W|}$ provided by Lemma \ref{lemma:gensym}.
	Lemmas \ref{lemma:gensym} and \ref{lemma:symstdrep}  and Proposition \ref{prop:Qsymfactor} 
	imply that $G, B, q, \preceq$ satisfy $\ddagger$ with the homomorphism $p:\Q[G] \to \Mat_{m}(\Q) = B$ 
	induced by the standard representation $G \to \GL_{m}(\Q)$. Indeed, 1 of $\ddagger$
	holds since $p(q(u) - 1) = 1_B(q(u)-1)$ is invertible for all $u \in W$ and 2 holds since
	every pair of elements in $W$ generates $G$.
	
	Now, consider the case when $\mcO = \Mat_m(\Z(\zeta))$ where $m$ divides $k$, the largest
	prime factor of $k$ is at least $|W|$, and $m, \frac{k}{m}$ are coprime. 
	In this case, let $G = H(k)$, the mod $k$ Heisenberg group. We define
	$q\colon  F[W] \to H(k)$ by mapping $W$ injectively into the set $S$ from Lemma \ref{lemma:Heisreps}.
	We let $B = \Mat_{m}(\Z[\zeta_k])$ be the factor of $\Q[G]$ from
	Lemma \ref{lemma:Heisreps}. 
	Then, $(G, B, q)$ satisfy conditions 1 and 2 of $\ddagger$ by Lemmas \ref{lemma:Heisgens} and \ref{lemma:Heisreps}.
	Since $m \geq 2$, condition 3 is satisfied as well. 
	
	We now show that in both cases, $\mc G = \mc G_1$ as defined for Corollary \ref{cor:generaloutagamquots}
	and $\mc G = \mc G_2$ as defined for Theorem \ref{thm:mainthm} are virtually isomorphic.
	In all cases of $G, B, q$ above, $b_v = 1_B(q(v) - 1)$ is invertible for all $v \in W$. We can
	therefore define a new basis of $C_B$ as $e_v = b_v^{-1} \hat v_B$. 
	Though $\mc G_1$ is the subgroup
	of $\mc G(B)$ preserving the $\mc O$-span of the $\hat v_B$, it is commensurable to the
	subgroup $\mc G_3$ of $\mc G(B)$ preserving the $\mc O$-span of the $e_v$.
	Moreover, the set of vectors $\{e_v - e_w \mid v, w \in W\}$ spans $H_B$.
	It is now clear that $\mc G_3 \cong \mc G_2$.
\end{proof}

\begin{proof}[Proof of Theorem \ref{thm:mainthmsemidprod}]
	Let $G, B, q, \preceq,$ and $\mc U$ as in the proof of Theorem \ref{thm:mainthm}.
	We begin by identifying a subgroup of $\Aut_B(C_B)$ which is isomorphic to $\mc G \ltimes \mcO^{n-1}$.
	Let $e_v = b_v^{-1} \hat v_B$ as in the previous proof, let $C_\mcO$ be the 
	$\mcO$-span of the $e_v$, and let $H_\mcO$ be the $\mcO$-span of $\{e_v - e_w \mid v, w \in W\}$.
	Let $\tilde{\mc G}(B)$ be the subgroup of $\Aut_B(C_B)$ as defined in Section \ref{section:definingrho}.
	Then, the subgroup of $\tilde{\mc G}(B)$ preserving $C_\mcO$ and its projection to $\Aut_\mcO(H_\mcO)$
	are isomorphic to $\tilde{ \mc G}$ and $\mc G$ respectively as defined in the introduction and
	Theorem \ref{thm:mainthmsemidprod}. We let $\mc N < \Aut_B(C_B)$ be the subgroup preserving
	$H_B$ and $C_\mcO$ and acting trivially on $H_B$ and $C_B/H_B$. Let $\mathfrak G = \tilde{\mc G} \mc N$.
	Then $\mathfrak G$ projects to $\mc G$ in $\Aut_B(H_B)$ with kernel $\mc N$ and 
	$\mathfrak G \cong \mc G \ltimes H_\mcO \cong \mc G \ltimes \mcO^{n-1}$.
	
	Now, let $\tilde \rho_B: \Aut(F[W]; R) \to \Aut_B(C_B)$ be the representation
	as defined in Section \ref{section:isotypcomps}. By Lemma \ref{lemma:AutAGamtoAutFn},
	it suffices to prove that $\tilde \rho_B(\Delta)$
	is commensurable with $\mathfrak G$ where $\Delta$ is the intersection of
	$\Aut(F[W], \mc U; R) \Inn(F[W])$ and $\Aut(F[W]; R)$. As for $\mc G$ in the previous
	proof, $\tilde{\mc G}$ is commensurable to the group of
	the same name in Theorems \ref{thm:maintechthmupperbound} and \ref{thm:maintechthmlowerbound}, and so 
	by those same theorems, $\tilde \rho_B(\Aut(F[W], \mc U; R))$ is commensurable
	to $\tilde{\mc G}$ which projects to a finite index subgroup of $\mc G$.
	
	It now suffices to prove that $\tilde \rho_B(\Inn(F[W]) \cap \Delta)$ is commensurable to $\mc N$.
	Since $e_v \equiv e_w$ (mod $H_B$), $\mc N$ consists entirely of automorphisms $\psi$ satisfying
	$\psi(e_v) - e_v = \psi(e_w) - e_w \in H_\mcO$ for all $v, w \in W$.
	Clearly, $\Inn(R)$ is of finite index in $\Delta$. Since $\Inn(R)$ acts trivially on $\HH_1(R; \Q) = H$ and thus $H_B$,
	the image $\tilde \rho_B(\Inn(R))$ virtually lies in $\mc N$. If $\varphi_r$ is conjugation by $r \in R$,
	then 
	$$\tilde \rho_B(\varphi_r)(e_v) = b_v^{-1} \cdot 1_B \cdot \overline{rvr^{-1}} = e_v - 1_B \overline{r}$$
	Since $H_\mcO$ is finitely generated, by Lemma \ref{lemma:lifttogroupring}, 
	there is some large integer $M > 0$ such that all $z \in M H_\mcO \subset H$
	are equivalent to $1_B \overline{r}$ for some $r \in R$. Since, as abelian groups, $\mc N \cong H_\mcO$,
	we conclude that $\tilde \rho_B(\Inn(R))$ contains a finite index subgroup of $\mc N$.
\end{proof}

\begin{remark} \label{remark:mfGistildeG}
	Consider the case where $v \preceq w$ for all $v, w \in W$ and $|W| \geq 4$. Then $\mc U = \{ \emptyset, W\}$ and $\Aut(F[W], \mc U) = \Aut(F[W])$.
	Moreover, conditions 1, 2, and 3 defining $\tilde{\mc G}$ are vacuous, and so $\mc N$ is a subgroup of $\tilde{\mc G}$.
	Consequently, the proof above implies that $\tilde \rho_B(\Aut(F[W]; R))$ is commensurable with $\tilde{\mc G}$. 
	Moreover, if $\mc O = \Mat_m(\Z)$, then $\tilde{\mc G}$ is virtually isomorphic to $\mc G \ltimes H_\mcO$ which is
	virtually isomorphic to 
	$$\SL_{n-1}(\Mat_m(\Z)^{op}) \ltimes (\Mat_m(\Z))^{n-1} \cong \SL_{n-1}(\Mat_m(\Z)) \ltimes (\Mat_m(\Z))^{n-1} $$
	$$ \cong \SL_{(n-1)m}(\Z) \ltimes \Mat_{(n-1)m \times m}(\Z) $$
	 However, the action of $\SL_{(n-1)m}(\Z)$ preserves the columns of matrices in $\Mat_{(n-1)m\times m}(\Z)$, and so
	 there is a surjective map $$\SL_{(n-1)m}(\Z) \ltimes \Mat_{m(n-1)\times m}(\Z) \to \SL_{(n-1)m}(\Z) \ltimes \Z^{(n-1)m}.$$
\end{remark}

\begin{proof}[Proof of Theorem \ref{thm:outfnmodtrans}]
	If we choose the preorder $\preceq$ where $u \preceq v$ for all $u, v \in W$, then $\Aut(F[W], \mc U)$,
	and consequently $\Aut(F[W], \mc U; R)$, is a finite index subgroup of $\Aut(F[W])$.
	We can therefore use Corollary \ref{cor:maintechcorlowerbound} and results from Section \ref{section:upperbound}.
		
	Before that, we use results of \cite{MPut} to find an appropriate $G, B, q$ satisfying $\ddagger$. 
	Recall that each
	irreducible $\Q$-representation of a finite group $G$ corresponds to a simple factor $B$ of $\Q[G]$.
	Proposition 4.3 of \cite{MPut} states the following in the notation of this paper. Suppose $q: F[W] \to G$ is surjective,
	and $G$ has an irreducible representation $p: G \to \GL_t(\Q)$ where $p(q(x))$ has no fixed vector for all primitives
	$x \in F_n$. Let $p: \Q[G] \to \Mat_t(\Q)$ be the induced map and $B = p(\Q[G]))$ which, as mentioned, must be a simple
	factor of $\Q[G]$. Let $\rho_B$ be the representation corresponding to $G, B, q$, and let $k$ be the least common multiple
	of the orders of $q(x)$ over all primitives $x$. Then, for any transvection $T \in \Aut(F[W])$, the power $T^k$
	lies in the kernel of $\rho$. The proof of Theorem D in \cite{MPut} shows that there is such a homomorphism
	$q: F[W] \to G$ and irreducible representation $p$ when $n \geq 2$. We have produced our virtual representation
	$\rho_B$ whose kernel contains all $k$th powers of transvections. It remains to show that the image is an arithmetic group, namely 
	$\mc G$ as defined in Section \ref{section:definingrho}.
	
	To show that $\rho_B(\Aut(F[W], \mc U; R)$ virtually contains $\mc G$, we check that
	conditions $\ddagger$ are satisfied and use Corollary \ref{cor:maintechcorlowerbound}. 
	Conditions 2 and 3 follow easily since there is precisely one maximal
	equivalence class of size $n \geq 4$. The fact that $p(q(v))$ has no nontrivial fixed vectors
	implies that $1$ is not an eigenvalue and hence $p(q(v))-1_B = 1_B(q(v) - 1)$ is invertible. This holds for all $v \in W$
	and thus condition 1 holds.
	
	Recall from Section \ref{section:upperbound} that $\tilde{\mc G}'$ is the subgroup of $\Aut_B(C_B)$
	satisfying all the constraints of $\tilde{\mc G}$ except the condition on reduced norms.
	Let $\mc G'$ be the projection to $\Aut_B(H_B)$.
	By Corollary \ref{cor:upperboundworednrm}, $\rho_B(\Aut(F[W], \mc U; R))$ lies in $\mc G'$.
	Since $\mc U = \{\emptyset, W\}$, we have $\mc G' \cong \GL_{n-1}(\mcO^{op})$, and the only
	difference between $\mc G$ and $\mc G'$ is that elements of $\mc G$ must have reduced norm $1$.
	
	To close the gap between $\mc G'$ and $\mc G$, we may do one of the following. We can use 
	Corollary \ref{cor:maintechcorupperbound} and the fact that $\Aut(F_m)$ has property (T) for $m \geq 5$ \cite{KKN, KNO}. Alternatively,
	we can borrow a trick from Section 7 of \cite{GL}. 
	Let $K$ be the center of the algebra $B$. Then, $K$ is a finite field extension of $\Q$, and the image
	of the reduced norm map $\nrd: \mc G' \to K^\times$ lies in the (group of units of the) ring of integers of $K$.
	By Dirichlet's Unit Theorem then, the image $\nrd(\mc G')$ is a finitely generated abelian group. Moreover,
	letting $S < \mc G'$ be scalar matrices with entries in $K$, we have $\nrd(S) < \nrd(\mc G')$ is of finite index.
	Let $\Delta < \nrd(S)$ be a finite index free abelian subgroup, and
	let $\Delta' = \nrd^{-1}(\Delta) < \mc G'$.
	Picking some section $\sigma: \Delta \to S$, we then define a homomorphism $\Delta' \to \mc G$
	via $M \mapsto M s(\nrd(M)^{-1})$. Note that $\sigma$ is the identity on $\Delta' \cap \mc G$. Composing by this
	homomorphism, we get the desired virtual surjective homomorphism $\Aut(F[W]) \to \mc G$.
	Since $n \geq 4$, the group $\mc G$ contains a finite index subgroup of $\SL_2(\Z)$ which contains
	nonabelian free groups.
\end{proof}

\subsection{Other finite groups and arithmetic quotients}

We expect that characterizing those triples $(G, B, q)$ which satisfy conditions $\ddagger$ in any meaningful way is
difficult if not impossible. Even characterizing those algebras $B$ which appear as a factor of $\Q[G]$
cannot be described in a simple way. (Such $B$ up to Brauer-equivalence are, in some sense, characterized.
See section 9.8 of \cite{GLLM} for a brief discussion.) Consequently, we only have gone so far as to
show that the virtual arithmetic quotients obtainable via our theorems are quite varied
and may have arbitrarily large dimension.

\section{Domination Poset and the Conditions $\dagger$} \label{section:conditions}
In this section, we discuss how restrictive the conditions $\dagger$ are.
We briefly discuss condition 3. If we do not assume this condition, then the proofs of
our main theorems and main technical theorems simply do not hold. Specifically, we cannot
get the virtual representations from $\Aut(A_\Gamma)$ by considering the action on the homology
of a finite index subgroup. One runs into exactly the same issues as noted in \cite{GL}[Section 9.1].
On the other hand, in such a situation, $\Aut(A_\Gamma)$ maps onto $\Aut(F_2)$ which is virtually a free
group, and so $\Aut(A_\Gamma)$ virtually maps onto any finitely generated group \cite{GuirardelSale}.

We now discuss the remaining conditions. Condition 1 seems rather restrictive. However, since the domination
relation is so strong, as soon as two vertices are adjacent, it's necessarily the case that many other
vertices will become adjacent if the domination poset has some complexity. What will be shown is that
any RAAG with a somewhat complex domination poset either has a subset of vertices satisfying $\dagger$
or has very large cliques. While this doesn't rule out that $\Aut(A_\Gamma)$ has many virtual arithmetic
quotients, it does make it difficult to see how they come about from transvections.

\begin{lemma} \label{lemma:forcedadjacency}
	Let $\Gamma$ be a graph, and suppose $v_1, v_2 \in V(\Gamma)$ are distinct vertices.
	If $v_1$ and $v_2$ are adjacent, then every vertex of $U(v_1)$ is adjacent to every vertex of $U(v_2)$.
	In particular, if $v_1 \geq v_2$, then additionally $U(v_1)$ spans a complete graph.
\end{lemma}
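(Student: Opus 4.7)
The plan is to unpack the definition of domination directly and chase elements. Recall that $w \geq v$ means $\lk(v) \subseteq \st(w) = \lk(w) \cup \{w\}$, so every neighbor of $v$ is either $w$ itself or a neighbor of $w$. I will apply this observation twice, once for each of the dominations $w_1 \geq v_1$ and $w_2 \geq v_2$, to force adjacency of $w_1$ and $w_2$ whenever they are distinct.

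For the main statement, fix $w_1 \in U(v_1)$ and $w_2 \in U(v_2)$ with $w_1 \neq w_2$, and aim to show they are adjacent. Since $v_2 \in \lk(v_1) \subseteq \st(w_1)$, I branch on whether $w_1 \in \lk(v_2)$ or $w_1 = v_2$. In the first case, the inclusion $\lk(v_2) \subseteq \st(w_2)$ applied to $w_1$, together with $w_1 \neq w_2$, immediately yields that $w_1$ and $w_2$ are adjacent. The remaining case $w_1 = v_2$ needs one more step: using $v_1 \in \lk(v_2) = \lk(w_1)$ and $\lk(v_2) \subseteq \st(w_2)$, either $v_1 = w_2$ (in which case $w_1 = v_2$ and $w_2 = v_1$ are adjacent by hypothesis) or $v_1$ is adjacent to $w_2$; in the latter subcase, a final application of $\lk(v_1) \subseteq \st(w_1)$ to $w_2 \in \lk(v_1)$ forces $w_2 \in \st(w_1)$, and $w_1 \neq w_2$ gives the desired adjacency.

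For the ``in particular'' clause, transitivity of the preorder gives $U(v_1) \subseteq U(v_2)$, so any two distinct $w, w' \in U(v_1)$ may be viewed as a pair in $U(v_1) \times U(v_2)$ and are therefore adjacent by the first part; hence $U(v_1)$ spans a clique. There is no real obstacle here; the only delicate point in the main statement is the subcase $w_1 = v_2$, where one cannot close up in a single inclusion and must instead chain the domination relation through $w_2$ and then back through $w_1$.
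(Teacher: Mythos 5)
Your proof is correct and follows essentially the same two-step strategy as the paper's: first use $w_1 \geq v_1$ to get $v_2 \in \st(w_1)$, then use $w_2 \geq v_2$ to close up. You are in fact more careful than the paper's terse argument, which asserts ``all vertices in $U(v_1)$ are adjacent to $v_2$'' without addressing the degenerate possibility $v_2 \in U(v_1)$ (i.e.\ $w_1 = v_2$); your explicit handling of that subcase is a welcome refinement rather than a divergence.
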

\begin{proof}
	Clearly, all vertices in $U(v_1)$ are adjacent to $v_2$. Since every vertex in $U(v_2)$ dominates $v_2$,
	we conclude every vertex in $U(v_2)$ is adjacent to every vertex in $U(v_1)$. The last statment follows
	since $U(v_1) \subseteq U(v_2)$.
\end{proof}

We define a {\it domination chain} of a graph $\Gamma$ to be a subset $v_1, v_2, \dots, v_k \in V(\Gamma)$
satisfying $v_1 \geq v_2 \geq \dots \geq v_k$. Note that we allow there to be additional relations between
the $v_i$. E.g. $v_1$ and $v_2$ could be domination equivalent.

\begin{lemma} \label{lemma:conditionssatforLBofmax}
	Let $v \in V(\Gamma)$. If $W = L(v)$
	is an independent set, then $W$ satisfies conditions 1, 4, and 5 of $\dagger$. 
\end{lemma}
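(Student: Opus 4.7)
My plan is to verify conditions $1$, $4$, and $5$ of $\dagger$ in turn, noting that $1$ and $5$ are essentially immediate and only condition $4$ requires a short argument. Condition $1$ holds by hypothesis. For condition $5$, suppose $w_1 \leq w_2$ and $w_2 \in W = L(v)$; then $w_2 \leq v$, so by transitivity of the domination preorder $w_1 \leq v$, placing $w_1$ in $W$.

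For condition $4$, I need to show that every $u \in W$ divides $W$ trivially. If $u = v$ then $W - L(u) = L(v) - L(v) = \emptyset$ and there is nothing to check, so I may assume $u \neq v$. Note $v \in W$ since $v \leq v$, and independence of $W$ then forces $u$ and $v$ to be non-adjacent, so $v \in \Gamma - \st(u)$. I claim that every $w \in W - L(u)$ lies in the same component of $\Gamma - \st(u)$ as $v$; the case $w = v$ is trivial, so take $w \neq v$. First, $w \in W$ gives $w \neq u$, and by independence $w$ is non-adjacent to $u$, hence $w \in \Gamma - \st(u)$. The key step is to exhibit an explicit path $w - x - v$ in $\Gamma - \st(u)$: since $w \not\leq u$ there exists $x \in \lk(w)$ with $x \notin \st(u)$, and since $w \leq v$ we have $\lk(w) \subseteq \st(v)$, so $x \in \st(v)$. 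Moreover $x \neq v$ (otherwise $w$ would be adjacent to $v$, contradicting independence of $W$), so $x \in \lk(v)$ and $w - x - v$ is a path in $\Gamma - \st(u)$, as required.

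The only real subtlety is handling the possibility that some vertex has no edges at all, which a priori could produce an isolated component and break the path argument. This is not an issue here: a vertex $w$ with $\lk(w) = \emptyset$ automatically satisfies $w \leq u$ for every $u$, and so never appears in $W - L(u)$. Apart from this bookkeeping, the argument relies on a single structural idea — that $w \in L(v) - L(u)$ combined with the independence of $W$ forces a neighbor of $w$ in $\lk(v) - \st(u)$ connecting $w$ to $v$ — so I anticipate no serious obstacle.
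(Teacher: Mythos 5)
Your proof is correct and follows essentially the same path as the paper's: both exhibit, for each $w \in W - L(u)$ with $u \in W$, a two-step path $w - x - v$ inside $\Gamma - \st(u)$, using $w \leq v$ to place $x$ in $\st(v)$ and $w \not\leq u$ to get $x \notin \st(u)$. You are a bit more scrupulous than the paper about the edge cases ($x = v$ ruled out by independence of $W$, $u = v$ handled separately, the vacuous-$\lk$ remark), which is harmless; the paper additionally records that any component meeting $L(v_1) - v_1$ is a singleton, but that observation is not needed to verify condition 4 as stated and your omission of it is fine.
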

\begin{proof}
	Conditions 1 and 5 are obviously satisfied. Now suppose $v_1 \in W$ and $C \subset \Gamma - \st(v_1)$
	is a component of $\Gamma - \st(v_1)$. Suppose $C$ contains $v_2 \in W - L(v_1)$. Since $v$ dominates $v_2$, but
	$v_1$ does not, there is some $v_3$ which is adjacent to both $v$ and $v_2$ but not $v_1$. Consequently, $v, v_2, v_3 \in C$
	and, since $v_2$ was arbitrary, $C \supseteq W - L(v_1)$. Suppose instead that $C$ contains $v_2 \in L(v_1) - v_1$.
	Then, since $v_1$ dominates $v_2$, we have $C = \{v_2\}$. Consequently, $v_1$ divides $W$ trivially.
\end{proof}

\begin{prop} \label{prop:exceptional}
	Suppose $\Gamma$ is a graph where no subset $W \subset V(\Gamma)$ satisfies conditions 1, 2, 4, and 5 of $\dagger$.
	Then, for every domination chain $v_1 \geq v_2 \geq v_3$ of length $3$, $U(v_1)$ is a clique in $\Gamma$,
	and $v_1$ dominates an adjacent vertex.
\end{prop}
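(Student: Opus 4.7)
The plan is to argue by contradiction using the set $W = L(v_1)$. Observe first that $W$ automatically inherits conditions 2 and 5 from the chain: by transitivity of $\preceq$, any lower bound of an element of $L(v_1)$ is still a lower bound of $v_1$, so $W$ is closed under lower bounds; and $v_1, v_2 \in W$ are distinct vertices both dominating $v_3 \in W$, giving condition 2. So if additionally conditions 1 and 4 held for $W$, we would directly contradict the hypothesis on $\Gamma$.

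Next, I would suppose for contradiction that $L(v_1)$ is an independent set. Then Lemma \ref{lemma:conditionssatforLBofmax} hands us conditions 1, 4, and 5 for free, and combined with condition 2 above we obtain $W = L(v_1)$ satisfying conditions 1, 2, 4, 5 of $\dagger$, contradicting the hypothesis. Therefore $L(v_1)$ must contain a pair of adjacent vertices $u_1, u_2$.

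From the existence of such adjacent $u_1, u_2 \in L(v_1)$, I want to deduce that $v_1$ is itself adjacent to some vertex it dominates. If $v_1 \in \{u_1, u_2\}$, this is immediate: $v_1$ dominates and is adjacent to the other vertex. Otherwise $v_1 \in U(u_1) \cap U(u_2)$ with $v_1 \notin \{u_1, u_2\}$, and the first assertion of Lemma \ref{lemma:forcedadjacency} applied to the adjacent pair $u_1, u_2$ forces every vertex of $U(u_1)$ to be adjacent to every vertex of $U(u_2)$; in particular $v_1 \in U(u_1)$ is adjacent to $u_2 \in U(u_2)$, and of course $v_1 \geq u_2$. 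So in either case $v_1$ dominates an adjacent vertex, which is the second conclusion of the proposition. Feeding that adjacent dominated vertex back into the second assertion of Lemma \ref{lemma:forcedadjacency} then gives that $U(v_1)$ spans a complete graph, the first conclusion.

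The whole argument is essentially a bookkeeping assembly of Lemmas \ref{lemma:conditionssatforLBofmax} and \ref{lemma:forcedadjacency}; the only point that requires any care is the case split for upgrading ``$L(v_1)$ contains an adjacent pair'' to ``$v_1$ itself is adjacent to a vertex it dominates,'' which is where the forced adjacency in $U(u_1) \times U(u_2)$ gets used.
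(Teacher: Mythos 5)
Your proof is correct, and it takes a genuinely more streamlined route than the paper's. Both arguments hinge on the same two ingredients, Lemmas \ref{lemma:forcedadjacency} and \ref{lemma:conditionssatforLBofmax}, but the paper establishes the two conclusions of the proposition by two parallel contradiction arguments: the clique claim is first reduced to showing that in any length-$4$ chain $v_0 \geq v_1 \geq v_2 \geq v_3$ the top two vertices are adjacent (proved by assuming not, deducing $L(v_1)$ independent, and contradicting $\dagger$), and then a second, very similar contradiction argument on $L(v_1)$ handles the ``dominates an adjacent vertex'' claim. You instead run the contradiction on $L(v_1)$ once, extract an adjacent pair $u_1, u_2 \in L(v_1)$, and observe via a short case split (needed because $v_1$ itself may be one of the $u_i$) that $v_1$ is adjacent to a vertex it dominates; the clique conclusion then falls out immediately from the second assertion of Lemma \ref{lemma:forcedadjacency} applied to that adjacent dominated pair. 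This reversal of logical order --- proving ``dominates an adjacent vertex'' first and deducing ``$U(v_1)$ is a clique'' as a corollary --- eliminates the length-$4$-chain reduction entirely and with it the bookkeeping around whether the new top vertex is distinct from $v_2, v_3$ (or domination-equivalent to $v_1$), a point the paper's reduction treats somewhat loosely. The only items worth making explicit in your write-up are (i) that condition 2 for $W = L(v_1)$ uses that $v_1, v_2, v_3$ are distinct, which is part of the definition of a domination chain, and (ii) the observation that in the case $v_1 \notin \{u_1, u_2\}$ one needs $v_1 \neq u_2$ before invoking the first assertion of Lemma \ref{lemma:forcedadjacency}, which you do note.
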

\begin{proof}
	To prove the first part it suffices to show that if 
	$v_0 \geq v_1 \geq v_2 \geq v_3$ is a domination chain, then $v_0$ is adjacent
	to $v_1$. Suppose they aren't adjacent. Then, $v_0$ is not adjacent to $v_2$ or $v_3$ since $v_1$ dominates them, 
	and $v_1, v_2, v_3$ is an independent set or else $v_0$ and $v_1$ are adjacent by Lemma \ref{lemma:forcedadjacency}.
	Thus, $v_0, v_1, v_2, v_3$ is an independent set. We now claim $L(v_1)$ is an independent
	set. Given any distinct $u_1, u_2 \in L(v_1)$, we have $v_0, v_1 \in U(u_1) \cap U(u_2)$, but
	$v_0$ and $v_1$ are not adjacent, and so by Lemma \ref{lemma:forcedadjacency}, $u_1$ is not adjacent to $u_2$.
	By Lemma \ref{lemma:conditionssatforLBofmax}, $L(v_1)$ satisfies conditions 1, 2, 4, and 5, contradicting the assumptions.
	
	Now, suppose that there were a domination chain $v_1 \geq v_2 \geq v_3$ spanning an independent set in $\Gamma$.
	If $L(v_1)$ were an independent set, then $L(v_1)$ would satisfy conditions 1, 2, 4, and 5
	by Lemma \ref{lemma:conditionssatforLBofmax}. Consequently, there are two adjacent vertices in $L(v_1)$,
	and by Lemma \ref{lemma:forcedadjacency}, $v_1$ dominates an adjacent vertex. 
\end{proof}

\subsection{Some graphs containing vertex subsets satisfying $\dagger$}
One might wonder whether there are many graphs $\Gamma$ containing vertex sets satisfying $\dagger$.
In fact, we can show such graphs are as diverse as preorders satisfying 2 and 3 of $\dagger$.

\begin{prop} \label{prop:arbpreorder}
Given any pair $(X, \preceq)$ of a nonempty set $X$ and preorder, there is a graph
$\Gamma$ and subset $W \subseteq V(\Gamma)$ such that $(W, \preceq) \cong (X, \preceq)$ where $W$ is equipped with
the restriction of domination relation. Moreover, $W$ satisfies conditions 1, 4, and 5 of $\dagger$, and every vertex
of $V(\Gamma) - W$ is incomparable to all other vertices.
\end{prop}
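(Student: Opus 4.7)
The plan is to construct $\Gamma$ explicitly and read off the domination preorder. Set $W = \{v_x : x \in X\}$, and for each $u \in X$ introduce an auxiliary vertex $a_u$ together with a disjoint $5$-cycle $C_u$ attached to $a_u$ by a single edge at one distinguished vertex $c_u$. Further introduce a ``shield'' vertex $s$ adjacent to every $v_x$ (and to no $a_u$), and a ``hub'' vertex $t$ adjacent to every $a_u$ (and to no $v_x$), attaching a pendant $5$-cycle to each of $s$ and $t$ as well. Finally, declare $v_x \sim a_u$ precisely when $x \succeq_X u$, and include no other edges beyond those listed. By construction $W$ is an independent set and, since $W$ is identified with all of $X$, it is trivially closed under lower bounds, so conditions $1$ and $5$ of $\dagger$ are immediate.

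For the preorder on $W$, compute $\lk(v_y) = \{a_u : y \succeq_X u\} \cup \{s\}$ and $\st(v_x) = \{v_x\} \cup \{a_u : x \succeq_X u\} \cup \{s\}$; since $s$ lies in both, the containment $\lk(v_y) \subseteq \st(v_x)$ reduces to the implication $y \succeq_X u \Rightarrow x \succeq_X u$, which (taking $u = y$ in one direction and using transitivity in the other) is equivalent to $x \succeq_X y$. Hence the induced domination preorder on $W$ agrees with $\preceq_X$. To verify that every vertex in $V(\Gamma) - W$ is incomparable to every other vertex, the key observations are that a $5$-cycle has no internal domination relations and, being attached at a single vertex, contributes no external comparabilities either. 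For the remaining cross-type pairs one exhibits a witness in the link of one side that misses the star of the other: $c_u \in \lk(a_u)$ blocks $a_{u'} \succeq a_u$ for $u' \neq u$ and also blocks $v_x \succeq a_u$; $s \in \lk(v_x)$ blocks $a_u \succeq v_x$ (including the would-be pathology where $x$ is minimal in $X$ with trivial equivalence class); the pendant cycle of $t$ blocks $v_x \succeq t$ even when $x$ is maximal; the pendant cycle of $s$ blocks $a_u \succeq s$ even when $u$ is minimal; and all other pairs are handled analogously.

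Finally, condition $4$ uses the hub $t$: since $\st(v_x) = \{v_x, s\} \cup \{a_u : x \succeq_X u\}$, the vertex $t$ lies in $\Gamma - \st(v_x)$ and is adjacent to every $a_y$ with $y \not\preceq_X x$. For any $v_y, v_{y'} \in W - L(v_x)$ the path $v_y - a_y - t - a_{y'} - v_{y'}$ lies entirely in $\Gamma - \st(v_x)$, so $W - L(v_x)$ sits in a single connected component and $v_x$ divides $W$ trivially. The main obstacle is the sheer number of vertex-type pairs in the incomparability step, but once the role of each $5$-cycle and of $s$ and $t$ as a local ``shield'' is recognized, every case reduces to locating one witness neighbor.
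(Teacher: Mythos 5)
Your construction is correct and follows essentially the same blueprint as the paper's: a bipartite adjacency between $W$ and an auxiliary copy of $X$ (your $\{a_u\}$, the paper's $V$) encodes $\preceq$ as domination; a shield vertex $s$ adjacent to all of $W$ (the paper's $u_1$) and a hub $t$ adjacent to the auxiliary side (the paper's $u_2$, reached through $V'$) serve respectively to break spurious dominations and to glue $W - L(v)$ into a single component of $\Gamma - \st(v)$ for condition~4. The only genuine difference is the shielding gadgets: where the paper attaches a pendant edge $v_y - v_y'$ to each auxiliary vertex and a small path $u_1 - u_3 - u_2$, you attach pendant $5$-cycles to each $a_u$, to $s$, and to $t$; both choices are interchangeable, yours being a bit heavier but transparently free of internal domination. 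One small slip in the write-up: condition~$5$ is \emph{not} immediate from the mere fact that $W$ is a copy of all of $X$ --- it asserts that no vertex of $V(\Gamma) - W$ is dominated by a vertex of $W$, which is a claim about the ambient graph --- but it does follow from the stronger incomparability statement you verify afterwards, so the argument as a whole goes through.
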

Note that since conditions 2 and 3 depend only on the relation, $(W, \preceq)$ satisfies them if and only
if $(X, \preceq)$ does.

\begin{proof}
	The vertex set $V(\Gamma)$ will consist of four sets of vertices $W, V, V', U$. 
	The vertices of $W$ are copies of 
	the elements of $X$, i.e. for every $x \in X$ there is exactly one $w_x \in W$.
	Similarly, the sets $V$ and $V'$ consists of copies $v_x, v_x'$ respectively of 
	each element of $X$. The set $U$ has three vertices $u_1, u_2, u_3$. 
	The edges of $E(\Gamma)$ are as follows. Each $w_x \in W$ is adjacent to $v_y \in V$ if and only if
	$x \succeq y$. Each vertex $v_y$ is adjacent to its copy $v_y'$. All vertices of $W$ are connected to the vertex $u_1$, and all vertices of $V'$
	are connected to $u_2$, and the vertex $u_3$ is only adjacent to $u_1$ and $u_2$. Any pair of vertices not mentioned are not adjacent.
	By construction, it is clear that $W$ satisfies condition 1 of $\dagger$.

	It is straightforward to check that no vertex in $W, V, V', U$ dominates a vertex in a different set. (E.g. no
	vertex from $W$ dominates a vertex in $V$.) It is also easy to see that vertices of $V'$ (resp. $U$) do not dominate
	vertices of $V'$ (resp. $U$). For all pairs, $v_x, v_y \in V$, the vertex $v_y$ is adjacent to $v_y'$ but $v_x$ is not,
	so $v_x$ does not dominate $v_y$. The neighbors of $w_x$ are $\{v_y \mid y \preceq x\} \cup \{u_1\}$, and so 
	$w_y$ dominates $w_x$ if and only if $y \succeq x$. Consequently, $W$ with the domination relation
	is isomorphic to $(X, \preceq)$ and satisfies condition 5 of $\dagger$.
	
	Now consider $\Gamma - \st(w_x)$ for $w_x \in W$. Suppose $w_y \in W - L(w_x)$. 
	Then, by construction, $x$ is not an upper bound of $y$, and so $w_x$ is not adjacent to $v_y$.
	Consequently, there is a path from $w_y$ to $u_2$ in $\Gamma - \st(w_x)$, and it follows that
	$W - L(w_x)$ lies in a single component of $\Gamma - \st(w_x)$. I.e. condition 4 is satisfied.
\end{proof}

\section{Appendix} \label{section:appendix}
In this appendix, we describe a couple examples of $\mc G$ and compare these groups to the image of $\rho_0$.
From Proposition \ref{prop:arbpreorder}, given any finite set $(W, \preceq)$ with a preorder, it can be realized as 
a subset of some $V(\Gamma)$ where the preorder agrees with the domination relation on $W$ and all other
vertices are incomparable. We therefore just present the preorder on $W$ (as a poset on equivalence classes)
and the graph $\Gamma$ will be the one produced by the proposition.

\begin{figure}[h]
	\centering
	\labellist
	\pinlabel $v_1$ at 14 141
	\pinlabel $v_2$ at 14 95
	\pinlabel $v_3$ at 14 49
	\pinlabel $v_4$ at 14 3
	\pinlabel $W_1$ at 96 107
	\pinlabel $W_2$ at 163 107
	\pinlabel $W_3$ at 230 107
	\pinlabel $W_4$ at 96 37
	\pinlabel $W_5$ at 163 37
	\pinlabel $W_6$ at 230 37
	\endlabellist
	\includegraphics{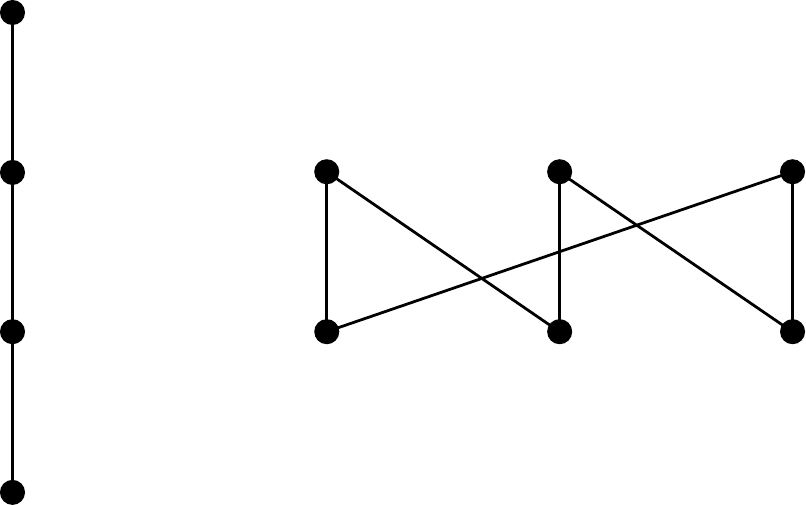}
	\caption{In the first example, the $v_i$ are vertices. In the second example, the $W_i$ are equivalence classes of size $3$. A vertex
	or equivalence class connected to a vertex or class below it indicates it is an upper bound of the vertex/class below it. }
	\label{figure:posets}
\end{figure}

Let $\Gamma$ be the graph corresponding to the first preorder and $\Gamma'$ to the second preorder in Figure \ref{figure:posets}. 
We first describe the image
under the representation $\rho_0$. If we order our basis of $\HH_1(A_\Gamma)$ as $v_1, v_2, v_3, v_4$ followed by the rest,
then $\rho_0(\Aut(A_\Gamma))$, up to finite index, consists only of matrices of the following form.
$$
\begin{array}{c|c}
	\left(\begin{array}{c|c} 
				\begin{array}{cccc} \pm 1 & c_{12} & c_{13} & c_{14} \\
								 &  \pm 1 & c_{23} & c_{24} \\
								 &  & \pm 1 & c_{34} \\
								 &  &  & \pm 1
				\end{array} &  \\ \hline \\
			&  	\begin{array}{cccc}  \pm 1 &  &  &  \\
										       &  \pm 1 &  &  \\
										    & & \ddots & \\
										     &  & &  \pm 1
				\end{array}
			\end{array}
	\right) \;\;\; 
	& \;\;\; c_{ij} \in \Z 
\end{array}
$$
If we order our basis of $\HH_1(A_{\Gamma'})$ as $W_1, W_2, W_3, W_4$ followed by the rest,
then $\rho_0(\Aut(A_{\Gamma'}))$, up to finite index, consists only of matrices of the following form.
$$
\begin{array}{c|c}
	\left(\begin{array}{c|c} 
				\begin{array}{cccccc} D_{1} &  &  & C_{14} & C_{15} &  \\
									 & D_{2} &  &  & C_{25} & C_{26} \\
									 &  & D_{3} & C_{34} &  & C_{36} \\
									 &  &  & D_4 &  &  \\
									 &  &  &  & D_5 &  \\
									 &  &  &  &  &  D_6 
				\end{array} &  \\ \hline \\
				 &  \begin{array}{ccc} \pm 1 &  &    \\
											       & \ddots &   \\
											     &  &  \pm 1
					\end{array}
			\end{array}
	\right) \;\;\; 
	&
	\begin{array}{c} D_i \in \GL_3(\Z) \\ C_{ij} \in \Mat_3(\Z)
	\end{array}
\end{array} 
$$

Now consider $\mc G$ for $\mcO = \Mat_m(\Z)$. Recall that $\mc G$ is a subgroup 
of $\Aut_{\mcO}(H_{\mcO}) \cong \Aut_{\mcO}(\mcO^{n-1}) \cong \GL_{n-1}(\mcO^{op})$. 
Since $\mcO^{op} \cong \mcO$ via the transpose map, we can view $\mc G$ as a subgroup of 
$\GL_{n-1}(\mcO) \cong \GL_{(n-1)m}(\Z)$. Let $e_v$ denote the free generators 
of $C_\mcO$, and recall that $H_\mcO = H_\mcO^-$, and so the vectors $e_v - e_w$
generate $H_\mcO$. For the first example of graphs, we choose
as basis of $H_{\mcO}$, the vectors $e_i' = e_{v_i} - e_{v_1}$ for $2 \leq i \leq 4$.
Recall that $\mc G$ is the restriction of $\tilde{\mc G} < \Aut_{\mcO}(C_{\mcO})$ to $H_{\mcO}$. 
For any $\varphi \in \tilde{\mc G}$, we have $\displaystyle \varphi(e_{v_i}) = e_{v_i} + \sum_{j = 1}^{i-1} \beta_j e_{v_j}$,
for some $\beta_j \in \mcO$,
and so $\mc G$ consists of matrices of the following form.

$$
\begin{array}{c|c}
 	\left( \begin{array}{ccc}  I & C_{12} & C_{13} \\
								  & I & C_{23} \\
								  &  & I
			\end{array} 
	\right) \;\;\; 
	& \;\;\; C_{ij} \in \Mat_m(\Z)
\end{array}
$$

Now we choose a basis for $H_{\mcO}$ for the second example. There does not appear to be any canonical
choice of basis which results in a simple description of $\mc G$.
We first choose one vertex $\mu_i \in W_i$ from each of the maximal equivalence classes.
We define sets of vectors as follows. For $1 \leq i \leq 3$, let $E_i = \{e_v - e_{\mu_i} \mid v \in W_i - \mu_i\}$.
Let $F$ be the ordered set $\{e_{\mu_2}-e_{\mu_1}, e_{\mu_3}-e_{\mu_1}\}$. For $4 \leq i \leq 6$, let $E_i = \{e_v - e_{\mu_{i-3}} \mid v \in W_i\}$.
We then order the basis elements as follows: $E_1, E_2, E_3, F, E_4, E_5, E_6$. (We didn't order the $E_i$, but it doesn't affect
the form of the matrix.) With this ordered basis, $\mc G$ consists of matrices of the following form.

$$
	\left (\begin{array}{ccc|cc|ccc} 
					D_1  &       &        & C_{14} & C_{14} & C_{16} & C_{17} &  \\
					     & D_2   &        & C_{24} &        &        & C_{27} & C_{28} \\
					     &       & D_3    &        & C_{35} & C_{36} &        & C_{38} \\ \hline
					     &       &        &  I     &        &        & C_{47} & C_{48} \\
					     &       &        &        & I      & C_{56} &        & -C_{48} \\ \hline
					     &       &        &        &        & D_6    &        &  \\
					     &       &        &        &        &        & D_7    &  \\
					     &       &        &        &        &        &        &  D_8
			\end{array} 
	\right) \;\;\; 
$$

$$ \begin{array}{ll} D_1, D_2, D_3& \in \SL_{2m}(\Z) \\
					D_6, D_7, D_8& \in \SL_{3m}(\Z) \\
					C_{14}, C_{24}, C_{35} &\in \Mat_{2m\times m}(\Z) \\
					C_{16}, C_{17}, C_{27}, C_{28}, C_{36}, C_{38} &\in \Mat_{2m \times 3m}(\Z)\\
					C_{47}, C_{48}, C_{56} &\in \Mat_{m \times 3m}(\Z)
\end{array} 
$$

We justify some of the less obvious claims hidden in the above description. Any $\varphi \in \mc G$
is the restriction of some $\varphi \in \tilde{\mc G}$ to $H_{\mcO}$. Then,
$\varphi(e_{\mu_i}) = e_{\mu_i} + z_i$ where $z_i \in H_{\mcO}[W_i]$. For $2 \leq i \leq 3$,
$$\varphi(e_{\mu_i} - e_{\mu_1}) = e_{\mu_i} - e_{\mu_1} + z_i - z_1.$$ 
Since $H_{\mcO}[W_i]$ is generated
by $E_i$, we see that the column for $e_{\mu_i}-e_{\mu_1}$ will have $I$ on the diagonal. Since $-e_{\mu_1}$ appears in both
vectors of $F$, the submatrix $C_{14}$ appears twice.
A basis vector in $E_6$ has the form $e_v - e_{\mu_3}$. We know $\varphi(e_v - e_{\mu_3}) = z$
for some $z \in  H_{\mcO}[U(W_6)]$ which has basis $E_2, E_3, e_{\mu_3} - e_{\mu_2}, E_6$. However, in our chosen
basis, $e_{\mu_3} - e_{\mu_2} = e_{\mu_3} - e_{\mu_1} - (e_{\mu_2} - e_{\mu_1})$, and so the $C_{48}$ repeats
with a change in sign as indicated. One can check via similar analyses that there are no other dependencies among the coefficients
in the matrix.

\subsection{Elementary lemmas on reduced norms}
We present a few lemmas verifying that the reduced norm behaves in certain ways like determinant. While these results are almost
certainly known, we provide proofs for convenience. We will require the use of the reduced characteristic polynomial which we
define similarly to the reduced norm. Given a finite-dimensional central simple $K$-algebra $A$, there exists a field extension
$L \geq K$ and an isomorphism of $L$-algebras $\psi: A \otimes_L K \cong \Mat_n(L)$ for some $n$. The reduced characteristic
polynomial of $a \in A$, denoted $\Prd(a)$, is the characteristic polyonmial of $\psi(a)$. This is well known to be a polynomial
in $K[x]$ and to be independent of the choice of $L$ and $\psi$. It's clear from the definitions that the reduced norm is the product
of the roots of the reduced characteristic polynomial.

\begin{lemma} \label{lemma:nrdonparabolic}
	Let $B$ be a finite-dimensional simple $\Q$-algebra with center $K$, and let $m \in \N$. Let $M$ be a free summand of the free $B$-module $B^m$,
	and let $\End_B(B^m, M)$ be the subalgebra of $\End_B(B^m)$ consisting of endomorphisms which preserve $M$. Let 
	$P: \End_B(B^m, M) \to \End_B(B^m/M)$ and $r: \End_B(B^m, M) \to \End_B(M)$ be the projection and restriction maps. Then,
	for any $f \in \End_B(B^m, M)$, we have $\nrd(f) = \nrd(P(f)) \cdot \nrd(r(f))$.
\end{lemma}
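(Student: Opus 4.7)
The plan is to reduce the statement to the familiar fact that the determinant of a block upper triangular matrix is the product of the determinants of its diagonal blocks. Since $M$ is a free $B$-summand of $B^m$, we can pick a free complement $M'$ so that $B^m = M \oplus M'$ with both $M$ and $M'$ free $B$-modules. Choosing $B$-bases of $M$ and $M'$ and concatenating gives a $B$-basis of $B^m$. In this basis, an endomorphism $f \in \End_B(B^m, M)$ is represented by a block matrix
$$\begin{pmatrix} r(f) & * \\ 0 & P(f) \end{pmatrix}$$
where, under the natural identifications $\End_B(M) \cong \Mat_k(B^{op})$ and $\End_B(B^m/M) \cong \Mat_{m-k}(B^{op})$, the blocks $r(f)$ and $P(f)$ on the diagonal are precisely the restriction and the induced map on the quotient.

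Next I would extend scalars to a splitting field. Let $L/K$ be a finite extension with an isomorphism $\psi\colon B \otimes_K L \cong \Mat_n(L)$; such an $L$ exists because $B$ is a finite-dimensional central simple $K$-algebra. Tensoring through, one gets compatible isomorphisms $\End_B(B^m) \otimes_K L \cong \Mat_m(B^{op}) \otimes_K L \cong \Mat_m(\Mat_n(L)) \cong \Mat_{mn}(L)$, and similarly for $M$ and $B^m/M$ with $m$ replaced by $k$ and $m-k$ respectively. By the very definition of the reduced norm, $\nrd(f)$ is the determinant of the image of $f \otimes 1$ in $\Mat_{mn}(L)$, and analogously for $\nrd(r(f))$ and $\nrd(P(f))$.

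The block decomposition above is preserved by the extension of scalars (tensoring is functorial and respects direct sums), so the image of $f \otimes 1$ in $\Mat_{mn}(L)$ remains block upper triangular, with diagonal blocks the images of $r(f) \otimes 1$ and $P(f) \otimes 1$. The classical identity
$$\det \begin{pmatrix} A & * \\ 0 & D \end{pmatrix} = \det(A) \cdot \det(D)$$
then yields $\nrd(f) = \nrd(r(f)) \cdot \nrd(P(f))$, as required.

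The only subtle point — and the part that requires the most care — is verifying that the identifications $\End_B(M) \otimes_K L$, $\End_B(B^m/M) \otimes_K L$, and $\End_B(B^m) \otimes_K L$ with the appropriate matrix algebras over $L$ are compatible with the block decomposition, so that the three reduced norms really are computed by the same underlying determinant of a block upper triangular matrix. This is a matter of chasing the isomorphisms induced by a compatible choice of $B$-bases for $M$, $M'$, and $B^m$, together with the fixed splitting $\psi$; it is routine but is the one place where bookkeeping is needed.
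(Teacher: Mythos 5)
Your proof is correct, but it takes a genuinely different route from the paper's. The paper first reduces to the block-diagonal case: it writes $f = g\cdot(fg^{-1})$ where $g$ agrees with $f$ on $M$ and on $B^m/M$ but also preserves a complement $M'$, and shows $\nrd(fg^{-1}) = 1$ by observing that $h = \id - fg^{-1}$ satisfies $h^2 = 0$, so $(1-x)^2$ divides the reduced characteristic polynomial $\Prd(fg^{-1})$. It then handles the block-diagonal case by choosing \emph{arbitrary} splittings $\End_B(M)\otimes_K L \cong \Mat_n(L)$, $\End_B(M')\otimes_K L \cong \Mat_{n'}(L)$, $\End_B(B^m)\otimes_K L \cong \Mat_{n+n'}(L)$, which a priori give some unknown embedding $\Mat_n(L)\times \Mat_{n'}(L)\hookrightarrow \Mat_{n+n'}(L)$; Skolem--Noether (isolated as a separate lemma) then forces this embedding to be conjugate to the canonical block-diagonal one. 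You instead build the splitting compatibly from the outset — one fixed $\psi\colon B^{op}\otimes_K L \cong \Mat_n(L)$ applied entry-wise to a block matrix in $\Mat_m(B^{op})$ — so the image of $f\otimes 1$ is visibly block upper triangular and the determinant formula applies directly to the full (not merely block-diagonal) matrix. Your approach avoids both the nilpotency/characteristic-polynomial step and the Skolem--Noether lemma, at the cost of a compatibility check on the chain of isomorphisms $\Mat_m(B^{op})\otimes_K L \cong \Mat_m(B^{op}\otimes_K L)\cong \Mat_m(\Mat_n(L))\cong \Mat_{mn}(L)$ and its restriction to the block structure; you correctly flag this as routine bookkeeping, and it is, but for a complete writeup it should be spelled out rather than deferred. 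The paper's strategy trades that bookkeeping for the independence-of-splitting property of the reduced norm plus Skolem--Noether, which lets it choose isomorphisms carelessly; either route is clean and both are valid.
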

\begin{proof}
	Let $f \in \End_B(B^m, M)$. Let $M'$ be a complement of $M$, and let $g \in \End_B(B^m, M)$ be an endomorphism preserving $M$
	and $M'$ satisfying $P(f) = P(g)$ and $r(f) = r(g)$. Let $h = \id_{B^m} - f g^{-1}$. Then $h(M) = 0$, and $h(B^m) \subseteq M'$.
	I.e. $h^2 = 0$. This implies that $(1-x)^2$ divides $\Prd(fg^{-1})$, and so $\nrd(fg^{-1}) = 1$. Consequently $\nrd(f) = \nrd(g)$.
	
	We have $g$ in the subalgebra $\End_B(M)\times \End_B(M') \subseteq \End_B(B^m)$. Since $P$ induces an
	isomorphism $\End_B(M') \cong \End(B^m/M)$,
	it suffices to prove $\nrd(g) = \nrd(g|_M)\cdot \nrd(g|_{M'})$ as reduced norm is invariant under isomorphisms of simple algebras.
	Choose isomorphisms $\End_B(M) \otimes_K L \cong \Mat_{n}(L)$ and $\End_B(M) \otimes_K L \cong \Mat_{n'}(L)$. These induce
	an isomorphism $(\End_B(M)\times \End_B(M'))\otimes_K L \cong \Mat_{n}(L) \times \Mat_{n'}(L)$. Choose an
	isomorphism $\End_B(B^m) \cong \Mat_{n+n'}(L)$. The inclusion $\End_B(M)\times \End_B(M') \hookrightarrow \End_B(B^m)$ then 
	leads to an embedding $\Mat_{n}(L) \times \Mat_{n'}(L) \to \Mat_{n+n'}(L)$. The lemma would be proved if this were
	the canonical embedding or conjugate to the canonical embedding. This is established by the next lemma.
\end{proof}

\begin{lemma}
	Let $L$ be a field extension of $\Q$. Then there is one embedding $\Mat_{n}(L) \times \Mat_{n'}(L) \to \Mat_{n+n'}(L)$
	of $L$-algebras up to conjugation in $\Mat_{n+n'}(L)$.
\end{lemma}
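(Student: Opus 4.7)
The plan is to view such an embedding $\phi\colon A \hookrightarrow \Mat_{n+n'}(L)=\End_L(V)$, where $A=\Mat_n(L)\times \Mat_{n'}(L)$ and $V=L^{n+n'}$, as endowing $V$ with the structure of a finite-dimensional $A$-module. I will then use the Artin--Wedderburn classification of modules over matrix algebras to identify this module with the canonical one, and read off the required conjugation from an $L$-linear change of basis.

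First, I would introduce the central idempotents $e_1=(I_n,0)$ and $e_2=(0,I_{n'})$ of $A$, and set $V_i=\phi(e_i)V$. Then $V=V_1\oplus V_2$ as $L$-vector spaces, each summand is preserved by $\phi(A)$, and the first factor $\Mat_n(L)$ acts on $V_1$ through $\phi$ while annihilating $V_2$, and symmetrically for the second factor.

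Next, since each $\Mat_k(L)$ is a simple $L$-algebra with a unique simple module up to isomorphism (namely $L^k$, of $L$-dimension $k$), every finite-dimensional $\Mat_k(L)$-module is a direct sum of copies of $L^k$. Hence $\dim_L V_1 = n\,k_1$ and $\dim_L V_2 = n'\,k_2$ for nonnegative integers $k_1,k_2$ satisfying $n\,k_1+n'\,k_2=n+n'$. Because $\phi$ is injective and each $\Mat_k(L)$ is simple, the restriction of $\phi$ to either factor is faithful, so $\phi(e_i)\ne 0$ and therefore $k_i\ge 1$. The equation $n(k_1-1)+n'(k_2-1)=0$ with $n,n'\ge 1$ and $k_i-1\ge 0$ forces $k_1=k_2=1$.

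Finally, I would choose an $L$-basis of $V_1$ realizing the standard $\Mat_n(L)$-module $L^n$ and an $L$-basis of $V_2$ realizing the standard $\Mat_{n'}(L)$-module $L^{n'}$; concatenating gives an $L$-basis of $V$ in which $\phi$ becomes the canonical block-diagonal embedding. The corresponding change-of-basis element of $\GL_{n+n'}(L)$ is the conjugator. The only subtle point, and so the main thing to get right, is the multiplicity count in the third step: one must combine the dimension identity with the injectivity of $\phi$ to rule out the case where one factor acts trivially (which would collapse $V_1$ or $V_2$); once that is done the rest is routine semisimple bookkeeping.
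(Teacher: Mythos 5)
Your proof is correct, but it follows a genuinely different route from the one in the paper. You treat the embedding $\phi$ as giving $V=L^{n+n'}$ the structure of an $A$-module for $A=\Mat_n(L)\times\Mat_{n'}(L)$, split $V$ using the central idempotents, invoke the uniqueness (up to isomorphism) of the simple module over each $\Mat_k(L)$, and then use the dimension count $nk_1+n'k_2=n+n'$ together with $k_i\ge 1$ (from injectivity of $\phi$) to force $k_1=k_2=1$; the conjugator is then just a change of basis aligning the module with the standard one. The paper instead applies the Skolem--Noether theorem twice: first to conjugate the restriction $\phi|_{\Mat_n(L)}$ to the canonical corner embedding, then observes (using commutativity with that corner copy of $\Mat_n(L)$ and a dimension count) that the image of $\Mat_{n'}(L)$ must lie in the complementary block, and applies Skolem--Noether again within that block. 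Both arguments ultimately rest on the same structure theory of simple $L$-algebras, but yours is more elementary and self-contained, replacing two appeals to Skolem--Noether with a direct module-theoretic multiplicity count; the paper's version is shorter to state but pushes the module theory into the black box of Skolem--Noether. One small stylistic note: in the step establishing $k_i\ge 1$, the appeal to simplicity is redundant --- injectivity of $\phi$ alone gives $\phi(e_i)\neq 0$, hence $V_i\neq 0$.
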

\begin{proof}
	Let $f : \Mat_{n}(L) \times \Mat_{n'}(L) \to \Mat_{n+n'}(L)$ be any such embedding. Then, $f$ restricts to an embedding of $L$-algebras
	$g: \Mat_{n}(L) \to \Mat_{n+n'}(L)$. Since these are both simple $L$-algebras, the Skolem--Noether Theorem implies that 
	$g$ is conjugate by an element of $a \in \Mat_{n+n'}(L)$ to the canonical embedding which sends $\Mat_{n}(L)$ to the upper left
	block in $\Mat_{n+n'}(L)$. Let $f'(c) = a f(c) a^{-1}$. Then, the fact that the images $f'( \Mat_{n}(L))$ and $f'( \Mat_{n'}(L))$
	commute and considerations of dimension imply that $f'$ embeds $\Mat_{n}(L) \times \Mat_{n'}(L)$ as block diagonal matrices
	of the appropriate size. Applying Skolem-Noether again to each block, we can apply a further conjugation to get the canonical
	embedding $\Mat_{n}(L) \times \Mat_{n'}(L) \hookrightarrow \Mat_{n+n'}(L)$.
\end{proof}

\bibliographystyle{abbrv}
\bibliography{AutRAAG}

\end{document}